\pgfplotsset{compat=1.13}
\theoremstyle{plain}
\newtheorem{question}{Question}
\newtheorem{problem}[question]{Problem}
\newtheorem{theorem}[question]{Theorem}
\newtheorem{proposition}[question]{Proposition}
\newtheorem{corollary}[question]{Corollary}
\newtheorem{lemma}[question]{Lemma}
\newtheorem{claim}[question]{Claim}
\newtheorem{remark}[question]{Remark}
\theoremstyle{definition}
\newtheorem{definition}[question]{Definition}
\numberwithin{question}{section}
\numberwithin{equation}{section}
\newtheorem*{theorem*}{Theorem}
\title{Long paths and connectivity in $1$-independent random graphs}
\author{A. Nicholas Day\thanks{Institutionen f\"or Matematik och Matematisk Statistik, Ume{\aa} Universitet, 901 87 Ume{\aa}, Sweden. Emails: nicholas.day@umu.se and victor.falgas-ravry@umu.se. Research supported by Swedish Research Council grant 2016-03488.} \and Victor Falgas--Ravry\footnotemark[1] \and Robert Hancock\thanks{Faculty of Informatics, Masaryk University, Botanick\'a 68A, 602 00 Brno, Czech Republic. Email: hancock@fi.muni.cz This work has received funding from the European Research Council (ERC) under the European Union's Horizon 2020 research and innovation programme (grant agreement No 648509) and from the MUNI Award in Science and Humanities of the Grant Agency of Masaryk University. This publication reflects only its authors' view; the European Research Council Executive Agency is not responsible for any use that may be made of the information it contains.}}
\begin{document}
\maketitle
\begin{abstract} 
Given a (possibly infinite) connected graph $G$, a probability measure $\mu$ on the subsets of the edge set of $G$ is said to be $1$\textit{-independent} if events determined by edge sets that are at graph distance at least $1$ apart in $G$  are independent.  Call such a probability measure a \textit{$1$-ipm on $G$}, and denote by $\mathbf{G}_{\mu}$ the associated random spanning subgraph of $G$.

Let $\mathcal{M}_{1,\geqslant p}(G)$ (respectively $\mathcal{M}_{1,\leqslant p}(G)$) denote the collection of $1$-ipms $\mu$ on $G$ for which each edge is included in $\mathbf{G}_{\mu}$ with probability at least $p$ (respectively at most $p$). Let $\mathbb{Z}^2$ denote the square integer lattice. Balister and Bollob\'as raised the question of determining the critical value $p_{\star}=p_{1,c}(\mathbb{Z}^2)$ such that for all $p>p_{\star}$ and all $\mu \in \mathcal{M}_{1,\geqslant p}(\mathbb{Z}^2)$, $\left(\mathbf{\mathbb{Z}^2}\right)_{\mu}$ almost surely contains an infinite component.  This can be thought of as asking for a $1$-independent analogue of the celebrated Harris--Kesten theorem.

In this paper we investigate both this problem and connectivity problems for $1$-ipms more generally. We give two lower bounds on $p_{\star}$ that significantly improve on the previous bounds. Furthermore, motivated by the Russo--Seymour--Welsh lemmas, we define a $1$-independent critical probability for long paths  and determine its value for the line and ladder lattices. Finally, for finite graphs $G$ we study $f_{1,G}(p)$ (respectively $F_{1,G}(p)$), the infimum  (respectively supremum) over all $\mu\in \mathcal{M}_{1,\geqslant p}(G)$ (respectively all $\mu \in \mathcal{M}_{1,\leqslant p}(G)$) of the probability that the random spanning subgraph $\mathbf{G}_{\mu}$ is connected. We determine $f_{1,G}(p)$ and $F_{1,G}(p)$ exactly when $G$ is a path, a complete graph and a cycle of length at most $5$. 

Many new problems arise from our work, which are discussed in the final section of the paper. 
\end{abstract}
\section{Introduction}\label{section: introduction}
\subsection{Bond percolation models, $1$-independence and edge-probability}
Let $G$ be a (possibly infinite) connected graph. Spanning subgraphs of $G$ are called \emph{configurations}. In a configuration $H$, an edge is said to be \emph{open} if it belongs to $H$, and \emph{closed} otherwise. A \emph{bond percolation model} on the host graph $G$ is a probability measure $\mu$ on the spanning subgraphs of $G$, i.e. on the space of configurations. Given such a measure, we denote the corresponding random graph model by $\mathbf{G}_{\mu}$, and refer to it as the $\mu$-random graph or $\mu$-random configuration.

In this paper, we study bond percolation models $\mu$ where the states (open or closed) of edges in subsets $F_1,F_2$ of $E$ in a $\mu$-random configuration are independent provided that the edges in $F_1$ and $F_2$ are `sufficiently far apart'. To make this more precise, we make use of the following definition.
\begin{definition}
	Two edge sets $F_1, F_2\subseteq E$ are $k$-distant if $F_1\cap F_2 =\emptyset$ and the shortest path of $G$  from an edge in $F_1$ to an edge in $F_2$ contains at least $k$ edges.
	A bond percolation model $\mu$ on $G$ is $k$-independent if for any pair $(F_1, F_2)$ of $k$-distant edge sets, the intersections $\mathbf{G}_{\mu}\cap F_1$ and $\mathbf{G}_{\mu}\cap F_2$ are independent random variables. 
\end{definition}
So for example $\mu$ is $0$-independent if each edge of $\mathbf{G}_{\mu}$ is open at random independently of all the others, i.e. $\mu$ can be viewed as a product of Bernoulli measures on the edges of $G$. A well-studied $0$-independent model is the Erd{\H o}s--R\'enyi random graph $\mathbf{G}_{n,p}$, where the host graph is $G=K_n$, the complete graph on $n$ vertices, and where $\mu$, known as the $p$\textit{-random measure}, sets each edge to be open with probability $p$, independently of all the others.

In this paper, we focus on the next strongest notion of independence, namely $1$-independence. Measures that are $1$-independent have the property that events determined by vertex-disjoint edge sets are independent.  For many $1$-independent models, the randomness can be thought to `reside in the vertices'. An important example of a $1$-independent model is that of \emph{site percolation} on the square integer lattice. In this case the host graph is the square integer lattice $\mathbb{Z}^2$ (where two vertices are joined by an edge if they lie at distance $1$ apart), and the measure $\mu=\mu_{\mathrm{site}}(\theta)$ is obtained by switching each vertex of $\mathbb{Z}^2$ \emph{on} at random with probability $\theta$,  independently of all the others, and by setting an edge to be open if and only if both of its endpoints are switched on. Site percolation measures may be defined more generally on any host graph in the natural way. 

Site percolation is an example of a broader class of $1$-independent measures where we independently associate to each vertex $v\in V(G)$ a state $S_v$ at random, and set an edge $uv$ to be open if and only if $f(S_u, S_v)=1$, for some deterministic function $f$ (which may depend on $u$ and $v$). We refer to such measures as \emph{vertex-based measures} (see Section~\ref{section: open problems} for a formal definition). Vertex-based measures on $\mathbb{Z}$ are a generalisation of the well-studied notion of {\it two-block factors}, which are vertex-based measures on $\mathbb{Z}$ in which the vertex states $S_u$ are i.i.d. random variables, see \cite{LiggettSchonmannStacey97} for further details.

An important point to note is that while all $0$-independent bond percolation models are a product of Bernoulli measures on the edges of $G$ (with varying parameters), it is well-known that a graph $G$ may support many $1$-independent measures which cannot be realised as vertex-based measures or as general ``block factors'', see for instance~\cite{AaronsonGilatKeanedeValk89,BurtonGouletMeester93,HolroydLigget16}. In particular for most graphs $G$, it is not feasible to generate or simulate the collection of $1$-independent measures of $G$.

\begin{definition}
	Given a bond percolation model $\mu$ on a host graph $G$, the (lower)-edge-probability of $\mu$ is 
	\[\mathrm{d}(\mu):=\inf_{e\in E(G)} \mu\{e\textrm{ is open} \}.\]
\end{definition}
So for instance a $p$-random measure has edge-probability $p$, while a site percolation measure with parameter $\theta$ has edge-probability $\theta^2$.  The collection of $k$-independent bond percolation models $\mu$ on a graph $G$ with edge-probability  $d(\mu)\geqslant p$ is denoted by $\mathcal{M}_{k,\geqslant p}(G)$.
\begin{remark}\label{remark: random sparsification}
Given a measure $\mu \in \mathcal{M}_{k,\geqslant p}(G)$, we may readily produce a measure $\tilde{\mu}\in \mathcal{M}_{k,\geqslant p}(G)$ such that $\tilde{\mu}(\{e\textrm{ is open} \})=p$ for all $e\in E(G)$ via \emph{random sparsification}: independently delete each edge $e$ of $\mathbf{G}_{\mu}$ with probability $p/\mu(\{e\textrm{ is open} \})\in[0,1]$. The resulting bond percolation model on $G$ is clearly $k$-independent and has the property that each edge is open with probability exactly $p$; the corresponding bond percolation measure $\tilde{\mu}$ thus has the required properties. 
\end{remark}

\subsection{Critical probabilities for percolation and motivation for this paper}
Percolation theory is the study of random subgraphs of infinite graphs. Since its inception in Oxford in the 1950s, it has blossomed into a rich theory and has been the subject of several monographs~\cite{BollobasRiordan06, Grimmett99, MeesterRoy96}.
The central problem in percolation theory is to determine the relationship between edge-probabilities and the existence of infinite connected components in bond percolation models.

In the most fundamental instance of this problem, consider an infinite, locally finite connected graph $G$, and let $\mu$ be a $0$-independent bond percolation model on $G$. We say that \emph{percolation} occurs in a configuration $H$ on $G$ if $H$ contains an infinite connected component of open edges. By Kolmogorov's zero--one law, for $G$ and $\mu$ as above, percolation is a tail event whose $\mu$-probability is either zero or one. This allows one to thus define the \emph{Harris critical probability} $p_{0,c}(G)$ for $0$-independent percolation:
\[p_{0,c}(G):= \inf\Bigl\{p\in [0,1]:  \forall \mu\in \mathcal{M}_{0,\geqslant p}(G), \ \mu(\{\mathrm{percolation}\})=1\Bigr\}.\]

\begin{problem}\label{problem: 0-independent percolation}
	Given an infinite, locally finite connected graph $G$, determine $p_{0,c}(G)$.	 	
\end{problem}
One of the cornerstones of percolation theory --- and indeed one of the triumphs of twentieth century probability theory --- is the Harris--Kesten theorem, which established the value of $p_{0,c}(\mathbb{Z}^2)$ to be $1/2$.
\begin{theorem*}[Harris--Kesten Theorem~\cite{Harris60, Kesten80}]
	Let $\mu$ be the $p$-random measure on $\mathbb{Z}^2$. Then	
	\[\mu(\{\mathrm{percolation}\})=\left\{\begin{array}{ll} 0 & \mathrm{if } \quad p\leqslant \frac{1}{2}\\
	1 & \mathrm{if }\quad  p>\frac{1}{2}. \end{array}\right.\]
\end{theorem*}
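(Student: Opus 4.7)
The plan is to exploit the self-duality of bond percolation on $\mathbb{Z}^2$. The planar dual of $\mathbb{Z}^2$ is again isomorphic to $\mathbb{Z}^2$ (shifted by $(\tfrac12,\tfrac12)$), and each primal edge $e$ is crossed by a unique dual edge $e^\star$; declaring $e^\star$ open iff $e$ is closed turns the dual into a $p$-random bond percolation model with parameter $1-p$. In particular, at $p=\tfrac12$ the primal and dual configurations are equidistributed, and this symmetry drives both halves of the argument.

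For the subcritical half, by monotonicity it suffices to rule out percolation at $p=\tfrac12$. I would follow Zhang's symmetry argument: suppose for contradiction that $\theta(\tfrac12)>0$, where $\theta(p)$ denotes the probability that the origin lies in an infinite open cluster under the $p$-random measure. By the Burton--Keane uniqueness theorem the primal almost surely contains a unique infinite open cluster, and by self-duality so does the dual. Applying the FKG inequality and the four-fold rotational symmetry of $\mathbb{Z}^2$ to a large box $B_n$, one shows that with positive probability the infinite primal cluster touches each of the four sides of $B_n$ from outside, and likewise for the dual. But a primal ``cross'' of $B_n$ then separates the infinite dual cluster into two disconnected pieces outside $B_n$, contradicting dual uniqueness. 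Hence $\theta(\tfrac12)=0$.

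For the supercritical half --- the deeper direction, due to Kesten --- I would proceed in three stages. First, prove a Russo--Seymour--Welsh (RSW) lemma: at $p=\tfrac12$ the probability of an open horizontal crossing of an $n\times \rho n$ rectangle is bounded below by a constant $c(\rho)>0$ for every fixed aspect ratio $\rho$. This rests on the self-dual identity that the $n\times n$ crossing probability equals $\tfrac12$, combined with the square-root trick and FKG. Second, boost crossing probabilities to $1$: for any $p>\tfrac12$, use Russo's formula and a sharp-threshold inequality to show that the probability of a horizontal crossing of a long rectangle tends to $1$ as $n\to\infty$. Third, a renormalization argument closes the proof: declare a scale-$n$ block \emph{good} if it contains suitable long open crossings, observe that good blocks stochastically dominate a highly supercritical $1$-dependent site percolation on a renormalized lattice, and glue crossings in adjacent good blocks together into an infinite open path in $\mathbb{Z}^2$.

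The hard part is the second stage. RSW is a geometric consequence of FKG, and the renormalization is a standard block-percolation argument; but converting ``crossing probabilities bounded below at $p=\tfrac12$'' into ``crossing probabilities tending to $1$ for every $p>\tfrac12$'' is precisely the content of Kesten's breakthrough. Kesten's original approach uses a delicate finite-size scaling argument relating the derivative of crossing probabilities to the expected number of pivotal edges, while modern treatments (e.g.\ Bollob\'as--Riordan) substitute Friedgut--Kalai-type sharp-threshold technology, which must nonetheless be adapted with care to handle the reduced symmetry of rectangular crossing events.
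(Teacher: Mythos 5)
The paper does not prove this theorem; it cites it as classical background (\cite{Harris60, Kesten80}), so there is no ``paper's own proof'' to compare against. What you have written is a reasonable top-down outline of the standard modern proof (as in Bollob\'as--Riordan's monograph), correctly partitioned into Harris's direction (criticality or subcriticality at $p=\tfrac12$) and Kesten's direction (supercriticality for $p>\tfrac12$), and you correctly flag the second stage as the hard step.

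One detail in the Harris half is not quite right as stated. In Zhang's argument the contradiction does not come from a primal crossing \emph{of $B_n$}: on the relevant positive-probability event you only know that, say, the left and right sides of $B_n$ each send an infinite primal open arm into the exterior, and the top and bottom sides each send an infinite \emph{dual} open arm into the exterior. Uniqueness of the infinite primal cluster (Burton--Keane) then forces the two primal arms to be joined by some primal open path \emph{somewhere} in the plane -- not necessarily inside $B_n$ -- and similarly the two dual arms must be joined by a dual open path. These two connecting paths must intersect by a planar Jordan-curve argument, and a primal open edge cannot cross an open dual edge (a dual edge is open precisely when the primal edge it crosses is closed). So the contradiction is the topological incompatibility of a primal left--right connection with a dual top--bottom connection, both in the exterior, rather than a `primal cross of $B_n$ separating the dual outside $B_n$'. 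With that tightening, the sketch is sound. Your remarks on the supercritical half (RSW at $p=\tfrac12$, a sharp-threshold step to push crossing probabilities to $1$ for $p>\tfrac12$, and a block renormalisation) are consistent with the modern treatment, and you are right that Kesten's original 1980 argument differs technically from the Friedgut--Kalai route even though it proves the same statement.
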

In this paper, we focus on the question of what happens to the Harris critical probability in $\mathbb{Z}^2$ if the assumption of $0$-independence is weakened to $k$-independence. In particular, how much can \emph{local} dependencies between the edges postpone the \emph{global} phenomenon of percolation?
\begin{definition}
	Let $G$ be an infinite, locally finite connected graph and let $k\in \mathbb{N}_{0}$. The Harris critical probability for $k$-independent percolation\footnote{As observed in~\cite{FalgasRavry12}, a simple $k$-independent variant of Kolmogorov's zero--one law shows that percolation remains a tail event when we consider $k$-independent models.} in $G$ is defined to be:
	\[p_{k,c}(G):= \inf\Bigl\{p\in [0,1]:  \forall \mu\in \mathcal{M}_{k, \geqslant p}(G), \ \mu(\{\mathrm{percolation}\})=1\Bigr\}.\]
\end{definition}

\begin{problem}\label{problem: Harris critical probabiliy for 1-ipm}
	Determine $p_{1, c}(\mathbb{Z}^2)$.
\end{problem}
Problem~\ref{problem: Harris critical probabiliy for 1-ipm} was proposed by Balister and Bollob\'as~\cite{BalisterBollobas12} in a 2012 paper in which they began a systematic investigation of $1$-independent percolation models. Study of $1$-independent percolation far predates their work (see e.g.~\cite{AaronsonGilatKeanedeValk89,  BalisterBollobasStacey93, BalisterBollobasWalters05, BurtonGouletMeester93, Janson84, LiggettSchonmannStacey97}), however, due to important applications of $1$-independent percolation models.

A standard technique in percolation is \emph{renormalisation}, which entails reducing a $0$-independent model to a $1$-independent one (possibly on a different host graph), trading in some dependency for a boost in edge-probabilities. Renormalisation arguments feature in many proofs in percolation theory; a powerful and particularly effective version of such arguments was developed by Balister, Bollob\'as and Walters~\cite{BalisterBollobasWalters05}.

Their method, which relies on comparisons with $1$-independent models on $\mathbb{Z}^2$ (in almost all cases) and Monte--Carlo simulations to estimate the probabilities of bounded events, has been applied to give rigorous confidence intervals for critical probabilities/intensities in a wide variety of settings: various models of continuum percolation~\cite{BalisterBollobasWalters05, BalisterBollobasWalters09, BalisterBollobas13}, hexagonal circle packings~\cite{BenjaminiStauffer13}, coverage problems \cite{BalisterBollobasSarkarKumar07, HaenggiSarkar13}, stable Poisson matchings~\cite{DeijfenHolroydPeres11, DeijfenHaggstromHolroyd12}, the Divide-and-Colour model~\cite{BalintBeffaraTassion13}, site and bond percolation on the eleven Archimedean lattices~\cite{RiordanWalters07} and for site and bond percolation in the cubic lattice $\mathbb{Z}^3$~\cite{Ball14}. The usefulness of comparison with $1$-independent models and the plethora of applications give strong theoretical motivation for the study of $1$-independent percolation.

From a more practical standpoint, many of the real-world structures motivating the study of percolation theory exhibit short-range interactions and local dependencies. For example a subunit within a polymer will interact and affect the state of nearby subunits, but perhaps not of distant ones.  Similarly, the position or state of an atom within a crystalline network may have a significant influence on nearby atoms, while long-range interactions may be weaker. Within a social network, we would again expect individuals to exert some influence in aesthetic tastes or political opinions, say, on their circle of acquaintance, and also expect that influence to fade once we move outside that circle. This suggests that $k$-independent bond percolation models for $k\geqslant 1$ are as natural an object of study as the more widely studied $0$-independent ones.

Despite the motivation outlined above, $1$-independent models remain poorly understood. To quote Balister and Bollob\'as from their 2012 paper:
``$1$-independent percolation models have become a key tool in establishing bounds on critical probabilities [...]. Given this, it is perhaps surprising that some of the most basic questions about $1$-independent models are open''.  There are in fact some natural explanations for this state of affairs.  As remarked on in the previous subsection, there are \emph{many} very different $1$-independent models with edge-probability $p$, and they tend to be harder to study than $0$-independent ones due to the extra dependencies between edges.  In particular simulations are often of no avail to formulate conjectures or to get an intuition for $1$-independent models in general. Moreover, while the theoretical motivation outlined above is probabilistic in nature, the problem of determining a critical constant like $p_{1,c}(\mathbb{Z}^2)$ is extremal in nature --- one has to determine what the worst possible $1$-independent model is with respect to percolation --- and calls for tools from the separate area of extremal combinatorics.

In this paper, we continue Balister and Bollob\'{a}s's investigation into the many open problems and questions about and on these measures.  Before we present our contributions to the topic, we first recall below previous work on $1$-independent percolation.
\subsection{Previous work on $1$-independent models}
Some general bounds for stochastic domination of $k$-independent models by $0$-independent ones were given by Liggett, Schonmann and Stacey~\cite{LiggettSchonmannStacey97}. Amongst other things, their results implied $p_{1,c}(\mathbb{Z}^2)<1$. Balister, Bollob\'as and Walters~\cite{BalisterBollobasWalters05} improved this upper bound via an elegant renormalisation argument and some computations. They showed that in any $1$-independent bond percolation model on $\mathbb{Z}^2$ with edge-probability at least $0.8639$, the origin has a strictly positive chance of belonging to an infinite open component. This remains to this day the best upper bound on $p_{1,c}(\mathbb{Z}^2)$. In a different direction, Balister and Bollob\'as~\cite{BalisterBollobas12} observed that trivially $p_{1,c}(G)\geqslant \frac{1}{2}$ for any infinite, locally finite connected graph $G$. In the special case of the square integer lattice $\mathbb{Z}^2$, they recalled a simple construction due to Newman which gives
\begin{align}\label{eq: site percolation bounds on p_{1,c}}
p_{1,c}(\mathbb{Z}^2)\geqslant \left(\theta_{\mathrm{site}}\right)^2+\left(1-\theta_{\mathrm{site}}\right)^2,
\end{align}
where $\theta_{site}$ is the critical value of the $\theta$-parameter for site percolation, i.e. the infimum of $\theta \in [0,1]$ such that switching vertices of $\mathbb{Z}^2$ on independently at random with probability $\theta$ almost surely yields an infinite connected component of on vertices. Plugging in the known rigorous bounds for $0.556\leqslant \theta_{\mathrm{site}}\leqslant 0.679492$~\cite{VDBErmakov96, Wierman95} yields $p_{1,c}(\mathbb{Z}^2)\geqslant 0.5062$, while using the non-rigorous estimate $\theta_{\mathrm{site}}\approx 0.592746$ (see for example \cite{SudingZiff99}) yields the non-rigorous lower-bound $p_{1,c}(\mathbb{Z}^2)\geqslant 0.5172$.

With regards to other lattices, Balister and Bollob\'as completed a rigorous study of $1$-independent percolation models on infinite trees~\cite{BalisterBollobas12}, giving $1$-independent analogues of classical results of Lyons~\cite{Lyons90} for the $0$-independent case. Balister and Bollob\'as's results were later generalised to the $k$-independent setting by Mathieu and Temmel~\cite{MathieuTemmel12}, who also showed interesting links between this problem and theoretical questions concerning the Lov\'asz local lemma, in particular the work of Scott and Sokal~\cite{ScottSokal05, ScottSokal06} on hard-core lattice gases, independence polynomials and the local lemma.

\subsection{Our contributions}
In this paper, we make a three-fold contribution to the study of Problem~\ref{problem: Harris critical probabiliy for 1-ipm}. First of all, we improve previous lower bounds on $p_{1, c}(\mathbb{Z}^2)$ with the following theorems.
\begin{theorem}\label{theorem: local construction integer lattice}
	For all $d \in \mathbb{N}_{\geqslant 2}$, we have that 
	\[p_{1,c}(\mathbb{Z}^{d}) \geqslant  4 - 2 \sqrt{3} \approx 0.535898\ldots \ .\]
\end{theorem}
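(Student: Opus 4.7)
To prove that $p_{1,c}(\mathbb{Z}^d) \geqslant 4 - 2\sqrt{3}$ for every $d \geqslant 2$, the plan is to exhibit an explicit $1$-independent measure $\mu^{*}$ on $\mathbb{Z}^d$ with $\mathrm{d}(\mu^{*}) = 4 - 2\sqrt{3}$ under which almost surely no infinite open cluster appears; such a measure immediately witnesses the desired lower bound on $p_{1,c}(\mathbb{Z}^d)$.

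I would construct $\mu^{*}$ as a vertex-based measure: attach to each vertex $v \in \mathbb{Z}^d$ an independent label $X_v$ from some small alphabet $\mathcal{S}$, and declare an edge $uv$ open iff the pair $(X_u, X_v)$ lies in a prescribed symmetric set $A \subseteq \mathcal{S}\times \mathcal{S}$. Independence of the $X_v$ automatically yields $1$-independence, and the edge-probability reads $\sum_{(s,t)\in A}\mathbb{P}(X=s)\mathbb{P}(X=t)$. The naive two-colour Newman construction gives at best $\alpha^2 + (1-\alpha)^2 \leqslant 0.5062$ via the rigorous lower bound on $p_c^{\mathrm{site}}(\mathbb{Z}^2)$, and, more seriously, fails outright in $\mathbb{Z}^d$ for $d \geqslant 3$ because $p_c^{\mathrm{site}}(\mathbb{Z}^d) < 1/2$; so $\mathcal{S}$ and $A$ must be chosen more carefully. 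The crucial feature I would enforce is that the label $X_v$ already locally restricts, in a dimension-uniform fashion, the set of incident edges at $v$ that are allowed to be open, so that the branching of open paths at any vertex is bounded by a constant independent of $d$.

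Optimising the distribution of $X$ and the set $A$ subject to such a local restriction is expected to lead to the balance equation $3p^2 = 4(1-p)^2$---equivalently $p^2 - 8p + 4 = 0$---whose unique positive root less than $1$ is precisely $p^{*} = 4 - 2\sqrt{3}$. The main technical obstacle will then be rigorously verifying non-percolation. Since $d$ is arbitrary we cannot rely on planar duality, and since $p^{*} > 1/2$ exceeds the bond-percolation threshold of $\mathbb{Z}^2$, we cannot dominate $\mu^{*}$ by a subcritical $0$-independent bond percolation either. I would instead run a path-counting argument: using the local restriction built into $A$, one shows that the probability that a given self-avoiding walk of length $n$ from the origin consists entirely of $\mu^{*}$-open edges decays as $r^n$ with $r \cdot \lambda(\mathbb{Z}^d) < 1$, where $\lambda(\mathbb{Z}^d)$ is the connective constant of $\mathbb{Z}^d$. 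Summing over all self-avoiding walks of length $n$ from the origin and letting $n \to \infty$ then gives $\mu^{*}(\mathrm{percolation}) = 0$, completing the proof. The hard part is arranging the construction so that the necessary negative correlations are strong enough to absorb the growth of $\lambda(\mathbb{Z}^d)$ at the precise value $p^{*} = 4 - 2\sqrt{3}$ rather than some smaller threshold.
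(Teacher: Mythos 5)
Your plan commits to an i.i.d.\ translation-invariant vertex-based rule and then to a SAW-counting verification, and both of these choices block a proof that works for all $d\geqslant 2$. For an i.i.d.\ construction with fixed finite alphabet $\mathcal{S}$, rule $A$, and fixed edge-probability $p^{*}>0$, some state $s$ has $\mathbb{P}(X\in A_s)\geqslant p^{*}$ where $A_s=\{t:(s,t)\in A\}$, and a vertex in that state has expected open degree $2d\cdot\mathbb{P}(X\in A_s)\geqslant 2dp^{*}$. Thus the local branching of the open graph grows linearly in $d$ no matter how you choose $\mathcal{S}$ and $A$: the ``dimension-uniform local restriction'' you want to build into $A$ simply does not exist in the translation-invariant setting. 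Consequently the SAW bound cannot close either: you would need $r\cdot\lambda(\mathbb{Z}^d)<1$ with $r$ independent of $d$, but $\lambda(\mathbb{Z}^d)\to\infty$ as $d\to\infty$, so this forces $r\to 0$, impossible with a fixed construction and $p^{*}=4-2\sqrt{3}>0$. You flag this tension at the end of your proposal, but it is not a technical difficulty to be overcome within your framework --- it is a structural obstruction to it.

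The paper escapes by abandoning translation invariance. The vertex states are still independent, but their distributions depend on the $\ell_{\infty}$-distance of the vertex from the origin: group $\mathbb{Z}^d$ into concentric shells $T_k=\{x:\|x\|_{\infty}=k\}$ and assign states from $\{\text{Blue},\text{Red},I\}$ with probabilities cycling with $k\bmod 6$. An edge is open iff its endpoints share a colour, or its outer endpoint is in the ``Inwards'' state $I$, which opens only the single edge to its inner neighbour --- so an $I$-vertex is a dead end of open degree at most one, uniformly in $d$. The colour is deterministically Blue on $T_k$ for $k\equiv 0\ (\mathrm{mod}\ 6)$ and deterministically Red on $T_k$ for $k\equiv 3\ (\mathrm{mod}\ 6)$, so any open path from $T_0$ to $T_3$ would have to change colour along a same-coloured chain (impossible) or route through an $I$-vertex (a dead end). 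Hence every open cluster is trapped between two shells three apart and is finite almost surely: no connective-constant estimate is needed, and the argument is identical in every dimension. The number $4-2\sqrt{3}$ comes from balancing the shell-dependent edge probabilities, setting $q=\sqrt{3}-1$ so that $q^2=q^2/2+(1-q)$; substituting $p=q^2$ this is your quadratic $p^2-8p+4=0$, so you found the right root but without the construction that realises it.
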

\noindent Theorem~\ref{theorem: local construction integer lattice} strictly improves on the previous best lower bound for $d=2$ given in (\ref{eq: site percolation bounds on p_{1,c}}) above; moreover, it is based on a very different idea, which first appeared in the second author's PhD thesis~\cite{FalgasRavry12}. In addition 
we give a separate improvement of~(\ref{eq: site percolation bounds on p_{1,c}}): let $\theta_{\mathrm{site}}$ again denote the critical threshold for site percolation. Then the following holds.
\begin{theorem}\label{theorem: global construction integer lattice}
	\[p_{1,c}(\mathbb{Z}^{2}) \geqslant \left(\theta_{\mathrm{site}}(\mathbb{Z}^{2})\right)^{2} + \frac{1}{2}\left(1-\theta_{\mathrm{site}}\left(\mathbb{Z}^{2}\right)\right).\]
\end{theorem}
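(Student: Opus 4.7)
The plan is to establish the theorem by exhibiting, for every $\theta<\theta_{\mathrm{site}}(\mathbb{Z}^2)$, a $1$-independent measure $\mu_\theta \in \mathcal{M}_{1,\geqslant \theta^2+(1-\theta)/2}(\mathbb{Z}^2)$ that almost surely fails to percolate; letting $\theta \uparrow \theta_{\mathrm{site}}(\mathbb{Z}^2)$ will then give the claimed bound.

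\textbf{Construction.} Fix a total order $<$ on $V(\mathbb{Z}^2)$, giving each edge $\{u,v\}$ a well-defined \emph{tail} $u<v$. Independently for each vertex $w$, draw $X_w \sim \mathrm{Bern}(\theta)$ and a fair coin $Y_w \sim \mathrm{Bern}(1/2)$. Declare the edge $\{u,v\}$ (with tail $u$) to be open iff either $X_u = X_v = 1$, or $X_u = 0$ and $Y_u = 1$. These two events are disjoint, so the edge-probability equals $\theta^2 + (1-\theta)/2$; since each edge's state depends only on the $(X,Y)$-variables at its endpoints, vertex-disjoint edges are independent, so $\mu_\theta$ is $1$-independent.

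\textbf{Non-percolation.} I would partition the vertices into three colour classes: $A = \{w: X_w=1\}$, $B = \{w:X_w=0, Y_w=1\}$, $C = \{w:X_w=0, Y_w=0\}$, with respective probabilities $\theta$, $(1-\theta)/2$, $(1-\theta)/2$. A short case check shows that $A$--$A$ and $B$--$B$ edges are always open; $A$--$B$ and $B$--$C$ edges are open iff the $B$-vertex is the tail of the edge; and $A$--$C$ and $C$--$C$ edges are always closed. Two observations should suffice for non-percolation:
\begin{enumerate}
    \item Since a $C$-vertex can lie in an open cluster $K$ only via incoming edges from its (at most two) lex-smaller $B$-neighbours, $|K\cap C|\leq 2|K\cap B|$; it is therefore enough to show $|K\cap (A\cup B)|$ is a.s.\ finite.
    \item As $\theta < \theta_{\mathrm{site}}(\mathbb{Z}^2)$ and $(1-\theta)/2 < 1/2 < \theta_{\mathrm{site}}(\mathbb{Z}^2)$, site percolations on the classes $A$ and $B$ are both subcritical with exponentially decaying connection radii; the open component of $A\cup B$ containing a vertex $v$ can be explored by alternating between $A$-site-clusters and $B$-site-clusters glued by open $A$--$B$ edges, a process I plan to dominate by a subcritical multi-type branching process.
\end{enumerate}

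\textbf{Main obstacle.} The last step is the delicate part: the mean $A$- and $B$-cluster sizes both diverge as $\theta\uparrow \theta_{\mathrm{site}}(\mathbb{Z}^2)$, so a na\"ive expected-offspring argument will not suffice. The key insight must be the one-sidedness of the $A$--$B$ bridges --- every open $A$--$B$ edge is forced to run from $B$ (the lex-smaller tail) to $A$ (the lex-larger head) --- together with exponential decay of subcritical site connection probabilities. Combining these should force the alternating exploration to terminate almost surely for every $\theta<\theta_{\mathrm{site}}(\mathbb{Z}^2)$, completing the proof.
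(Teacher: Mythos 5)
Your construction differs from the paper's in a structurally important way, and the ``Main obstacle'' you flag is a genuine and unresolved gap --- indeed, I do not believe your alternating-exploration argument can be made to work as stated.

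The paper's construction splits the non-On vertices into \emph{two} states $L$ and $D$, each of which opens \emph{exactly one} specified edge (the edge to the left, respectively below). Consequently the ``$LD$-sections'' (components of the graph formed by those edges) form a $1$-out subgraph: orienting each $L$- or $D$-edge away from its $L$/$D$-vertex produces, once finiteness is established, a directed forest in which each tree is oriented toward a \emph{unique} root lying in the On state. This is the crucial combinatorial fact: an $LD$-section can attach to at most one On-section, so components cannot be built by chaining On-sections together through $LD$-material. The finiteness of $LD$-sections is then verified not via exponential decay estimates near criticality but by an explicit coupling (the paper's Lemma~3.2) that dominates the $1$-out measure by an i.i.d.\ bond percolation at parameter $z=1-\sqrt{1-2t}\leqslant\frac{1}{2}$, and invokes Harris--Kesten.

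Your construction collapses $L$ and $D$ into a single state $B$ that opens \emph{all} of its outgoing edges (two of them under a lexicographic order on $\mathbb{Z}^2$), and additionally makes every $B$--$B$ edge open. The resulting $B$-rooted subgraph is a $2$-out graph rather than a forest, and $B$-clusters can simultaneously be adjacent to several distinct $A$-clusters and be reached from several distinct $A$-clusters; the ``each $LD$-section attaches to at most one On-section'' property that drives the paper's argument fails entirely. The ``one-sidedness'' you invoke (every open $A$--$B$ edge has its $B$-endpoint as the tail) does not substitute for it: edges are undirected, so a path may freely pass $A\to B\to A'\to B'\to\cdots$, alternating between $A$- and $B$-clusters, using a $B$-tailed edge at each step. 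You have identified exactly the right danger --- that the alternating exploration might not terminate --- but you offer only a heuristic (``the key insight must be\ldots''), and the proposed branching-process domination breaks down near $\theta_{\mathrm{site}}$ precisely because expected $A$-cluster sizes diverge there, as you yourself observe. Without a replacement for the unique-root structure (or an entirely different finiteness argument), the proof is incomplete, and I would expect this particular $\mu_\theta$ to actually percolate once $\theta$ is close enough to $\theta_{\mathrm{site}}$. To repair it you would need to restore the one-edge-per-opener property, which is precisely what the paper's $L$/$D$ split accomplishes.
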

\noindent Substituting the rigorous bound $\theta_{\mathrm{site}}\geqslant0.556 $ into Theorem~\ref{theorem: global construction integer lattice} yields the lower bound $p_{1,c}(\mathbb{Z}^{2}) \geqslant 0.531136$, which does slightly worse than Theorem~\ref{theorem: local construction integer lattice}. However substituting in the widely believed but non-rigorous estimate $\theta_{\mathrm{site}}\approx 0.592746$ yields a significantly stronger lower bound of $p_{1,c}(\mathbb{Z}^{2}) \geqslant 0.554974$. 

Secondly, motivated by efforts to improve the upper bounds on $p_{1,c}(\mathbb{Z}^2)$, and in particular to establish some $1$-independent analogues of the Russo--Seymour--Welsh (RSW) lemmas on the probability of crossing rectangles, we investigate the following problems. Let $P_n$ denote the graph on the vertex set $\{1,2,\ldots n\}$ with edges $\{12, 23, \ldots, (n-1)n\}$, i.e. a path on $n$ vertices. Given a connected graph $G$, denote by $P_n\times G$ the Cartesian product of $P_n$ with $G$. A \emph{left-right crossing} of $P_n\times G$ is a path from a vertex in $\{1\}\times V(G)$ to a vertex in $\{n\}\times V(G)$. We define the \emph{crossing critical  probability} for $1$-independent percolation on $P_n\times G$ to be
\[p_{1, \times}(P_n\times G):=\inf\Bigl\{p\in[0,1]:\ \forall  \mu \in \mathcal{M}_{1, \geqslant p}(P_n\times G), \ \mu(\exists\textrm{ open left-right crossing})>0 \Bigr\},\]
i.e. the least edge-probability guaranteeing that in any $1$-independent model on $P_n \times G$, there is a strictly positive probability of being able to cross $P_n\times G$ from left to right.
\begin{problem}\label{problem: critical crossing prob}
	Given $n\in \mathbb{N}$ and a finite, connected graph $G$, determine $p_{1, \times}(P_n\times G)$.	
\end{problem}
\noindent Problem~\ref{problem: critical crossing prob} can be thought of as a first step towards the development of $1$-independent analogues of the RSW lemmas; these lemmas play a key role in modern proofs of the Harris--Kesten theorem, and one would expect appropriate $1$-independent analogues to constitute a similarly important ingredient in a solution to Problem~\ref{problem: Harris critical probabiliy for 1-ipm}. By taking the limit as $n\rightarrow \infty$ in Problem~\ref{problem: critical crossing prob}, one is led to consider another $1$-independent critical probability. Let $G$ be an \emph{infinite}, locally finite connected graph. The \emph{long paths critical probability} for $1$-independent percolation on $G$ is
\[p_{1, \ell\mathit{p}}(G):=\inf\Bigl\{p\in[0,1]:\ \forall  \mu \in \mathcal{M}_{1, \geqslant p}(G),\  \forall n\in \mathbb{N}\ \mu(\exists\textrm{ open path of length }n)>0 \Bigr\},\] 
i.e. the least edge-probability at which arbitrarily long open paths will appear in all $1$-independent models in $G$.
\begin{problem}\label{problem: critical long paths prob}
	Given an infinite, locally finite, connected graph $G$, determine $p_{1, \ell\mathit{p}}(G)$.	
\end{problem}
\noindent In this paper, we resolve Problem~\ref{problem: critical crossing prob} in a strong form when $G$ consists of a vertex or an edge (see Theorems \ref{theorem: connected function Pn} and \ref{theorem: ladder construction}). This allows us to solve Problem~\ref{problem: critical long paths prob} when $G$ is the integer line lattice $\mathbb{Z}$ and the integer ladder lattice $\mathbb{Z}\times P_2$.
\begin{theorem}\label{theorem: long paths critical prob}
We have that
	\begin{enumerate}[(i)]
		\item $p_{1, \ell \mathit{p}}(\mathbb{Z})=\frac{3}{4}$, and
		\item $p_{1, \ell \mathit{p}}(\mathbb{Z}\times P_2)=\frac{2}{3}$. 
	\end{enumerate}
\end{theorem}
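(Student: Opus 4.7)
Both parts follow the same reduction from the infinite ``long paths'' problem to the corresponding finite ``crossing'' problem resolved by Theorems~\ref{theorem: connected function Pn} and~\ref{theorem: ladder construction}. I describe part (i) in detail; part (ii) is completely analogous, with $P_n$ replaced by the ladder $P_n\times P_2$, Theorem~\ref{theorem: connected function Pn} replaced by Theorem~\ref{theorem: ladder construction}, and ``all edges open'' replaced by ``a left--right crossing exists''. The key identity to establish is
\[ p_{1,\ell\mathit{p}}(\mathbb{Z}) \;=\; \sup_{n\geqslant 2} p_{1,\times}(P_n), \]
where Theorem~\ref{theorem: connected function Pn} will supply the numerical value $\sup_n p_{1,\times}(P_n)=3/4$ (and analogously Theorem~\ref{theorem: ladder construction} gives $\sup_n p_{1,\times}(P_n\times P_2)=2/3$), whence the claim follows.

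For the \emph{upper bound}, fix $p>\sup_n p_{1,\times}(P_n)$, any $\mu\in\mathcal{M}_{1,\geqslant p}(\mathbb{Z})$, and any target length $n$. The restriction of $\mu$ to the $n$ edges of a sub-path $P_{n+1}\subset\mathbb{Z}$ is a 1-ipm on $P_{n+1}$ with edge-probability at least $p>p_{1,\times}(P_{n+1})$; by the definition of $p_{1,\times}$ this restriction has positive probability of being entirely open, which is exactly an open path of length $n$ in $\mathbf{G}_\mu$. Hence $p_{1,\ell\mathit{p}}(\mathbb{Z})\leqslant p$.

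For the \emph{lower bound}, fix $p<\sup_n p_{1,\times}(P_n)$ and select $n$ together with a 1-ipm $\mu_0$ on $P_n$ with edge-probability at least $p$ and $\mu_0(\text{all edges open})=0$. I extend $\mu_0$ to a 1-ipm $\mu$ on $\mathbb{Z}$ by tiling: for each $k\in\mathbb{Z}$, let tile $T_k$ be the sub-path on the consecutive vertices $\{kn+1,\ldots,(k+1)n\}$, sample each tile from an independent copy of $\mu_0$, and make each connecting bridge edge $e_{(k+1)n}$ an independent Bernoulli($p$) random variable. Two routine checks finish the argument. First, $\mu$ is 1-independent: given vertex-disjoint $F_1,F_2\subseteq E(\mathbb{Z})$, the restrictions of $F_1$ and $F_2$ to any single tile are themselves vertex-disjoint within $P_n$ and hence induce independent events by 1-independence of $\mu_0$, while different tiles and bridges are driven by jointly independent sources of randomness. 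Second, any open path that enters two distinct tiles must contain every intrinsic edge of some intermediate tile, a $\mu_0$-null event; consequently, open paths in $\mathbf{G}_\mu$ have length at most $2n-3$ almost surely. The construction therefore witnesses $p\leqslant p_{1,\ell\mathit{p}}(\mathbb{Z})$, and letting $p$ tend to $\sup_n p_{1,\times}(P_n)$ from below yields the required lower bound.

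The substantive extremal work all sits in Theorems~\ref{theorem: connected function Pn} and~\ref{theorem: ladder construction}, which supply both the numerical evaluation of the suprema and the extremal 1-ipms on $P_n$ and $P_n\times P_2$ needed for the lower bounds; the only technical step specific to the present reduction is the 1-independence verification for the tiled construction, which I expect to be the main (but brief) obstacle. It reduces to checking that vertex-disjoint edge sets interact only through the 1-independent measure $\mu_0$ on a common tile, or through independent bridge Bernoullis --- a straightforward inspection of the driving randomness.
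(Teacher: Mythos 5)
Your reduction $p_{1,\ell p}(\mathbb{Z}\times G) = \sup_n p_{1,\times}(P_n \times G)$ is sound, and for part (i) your argument actually works and is in some ways cleaner than the paper's: Theorem~\ref{theorem: connected function Pn} pins down $p_{1,\times}(P_n)=p_n$ \emph{exactly} (both directions: $f_{1,P_n}(p)=0$ at $p=p_n$ for the $\geqslant$, and $f_{1,P_n}(p)>0$ for $p>p_n$ for the $\leqslant$), so taking $p_n \to 3/4$ you get both bounds without ever invoking the local-lemma estimate (Theorem~\ref{theorem: crossing G times Pn}) that the paper routes its upper bound through. Your tiling for the lower bound (disjoint blocks of length $n$ joined by independent Bernoulli bridges) differs slightly from the paper's (overlapping blocks glued at a shared vertex with no bridge), but both are valid 1-ipms, and your verification of 1-independence is essentially right.

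Part (ii), however, has a genuine gap. You write that ``analogously Theorem~\ref{theorem: ladder construction} gives $\sup_n p_{1,\times}(P_n\times P_2)=2/3$,'' but Theorem~\ref{theorem: ladder construction} is a one-sided \emph{construction} result: it only says that for each $p<2/3$ there is $N$ with $p_{1,\times}(P_n\times P_2)\geqslant p$ for $n\geqslant N$, i.e.\ it provides the inequality $\sup_n p_{1,\times}(P_n\times P_2)\geqslant 2/3$ and nothing more. Unlike Theorem~\ref{theorem: connected function Pn}, it does not determine the connectivity (or crossing) function of the ladder and gives no matching bound $p_{1,\times}(P_n\times P_2)\leqslant 2/3$. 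That upper bound is exactly what your argument needs for $p_{1,\ell p}(\mathbb{Z}\times P_2)\leqslant 2/3$, and it is exactly what the paper obtains from the local-lemma-type Theorem~\ref{theorem: crossing G times Pn} (via Corollary~\ref{corollary: bound on long path constant in products with the line}, applied with $G=P_2$, $f_{1,P_2}(p)=p$, yielding $p_\star(P_2)=2/3$). Your proposal never cites this theorem, so the hardest half of part (ii) -- the statement that every 1-ipm on $P_n\times P_2$ with edge-probability exceeding $2/3$ has positive crossing probability -- is left unproved, and ``completely analogous'' hides rather than handles this asymmetry between the two parts.
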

\noindent Note that part (i) of Theorem~\ref{theorem: long paths critical prob} above can be read out of earlier work of Liggett, Schonman and Stacey~\cite{LiggettSchonmannStacey97} and  Balister and Bollob\'as~\cite{BalisterBollobas12}. We prove further bounds on both $p_{1, \times}(P_n \times G)$ and $p_{1, \ell\mathit{p}}(\mathbb{Z}\times G)$ for a variety of graphs $G$. We summarise the latter, less technical, set of results below. Let $C_{n}$ and $K_n$ denote the cycle and the complete graph on $n$ vertices respectively.
\begin{theorem}\label{theorem: upper bounds on long paths critical prob}
We have that
	\begin{enumerate}[(i)]
		\item $ 0.5359\ldots =4-2\sqrt{3} \leqslant p_{1, \ell\mathit{p}}(\mathbb{Z}\times C_{n})\leqslant p_{1,\ell\mathit{p}}(\mathbb{Z}\times P_n)\leqslant \frac{2}{3} $ for all $n \geqslant 3$;
		\item $p_{1, \ell\mathit{p}}(\mathbb{Z}\times K_3) \leqslant \frac{1}{16} \Bigl(13 - \frac{55}{\sqrt[3]{128 \sqrt{14} - 251}} + \sqrt[3]{128 \sqrt{14} - 251}\Bigr)= 0.63154\ldots $\ ;
		\item $p_{1, \ell\mathit{p}}(\mathbb{Z}\times C_4) \leqslant (3-\sqrt{3})/2=0.63397\ldots$\ ;
		\item $p_{1, \ell\mathit{p}}(\mathbb{Z}\times C_5) \leqslant 0.63895\ldots$\ ;
		\item $0.5359\ldots=4-2\sqrt{3}\leqslant \lim_{n\rightarrow \infty} p_{1, \ell\mathit{p}}(\mathbb{Z}\times K_{n})\leqslant \frac{5}{9}=0.5555\ldots$.
	\end{enumerate}	
\end{theorem}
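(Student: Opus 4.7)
My plan is to prove the stated bounds in four groups: (a) the subgraph inequalities and upper bound $2/3$ in (i); (b) the lower bound $4-2\sqrt{3}$ in (i) and (v), via an adaptation of the local construction behind Theorem~\ref{theorem: local construction integer lattice}; (c) the algebraic upper bounds in (ii)--(iv), via the crossing critical probability $p_{1, \times}(P_n \times G)$; and (d) the upper bound $5/9$ in (v), via a separate renormalisation exploiting the denseness of $K_n$.

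The starting ingredient is subgraph monotonicity of $p_{1, \ell p}$: if $H\subseteq G$ then $p_{1, \ell p}(H)\geqslant p_{1, \ell p}(G)$, since restricting a bad $1$-ipm on $G$ to $E(H)$ yields a bad $1$-ipm on $H$ with the same edge-probabilities (and any open path in $H$ is one in $G$). Applied to $\mathbb{Z}\times P_2 \subseteq \mathbb{Z}\times P_n\subseteq \mathbb{Z}\times C_n$ together with $p_{1,\ell p}(\mathbb{Z}\times P_2)=2/3$ from Theorem~\ref{theorem: long paths critical prob}(ii), this immediately yields the inequalities in (i). For the lower bound $4-2\sqrt{3}$, I would revisit the vertex-based local construction behind Theorem~\ref{theorem: local construction integer lattice}: it almost surely produces open components of uniformly bounded size, and being purely local it adapts directly to $\mathbb{Z}\times C_n$ and $\mathbb{Z}\times K_n$, precluding arbitrarily long open paths and hence giving the lower bounds in (i) and (v).

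For the upper bounds in (ii)--(iv), I would first record the reduction $p_{1, \ell p}(\mathbb{Z}\times G)\leqslant \lim_n p_{1, \times}(P_n\times G)$ (since a left-right crossing of $P_n\times G$ is an open path of length at least $n-1$, this follows by restriction). The crossing problem on $P_n\times G$ then admits a column-by-column analysis: by $1$-independence the states of any two non-adjacent columns are independent, so the extremal $1$-ipm minimising the crossing probability is governed by a small transfer-matrix-type recurrence on column states. For each of $G\in\{K_3, C_4, C_5\}$ the critical $p$ is the root of a low-degree polynomial: a cubic for $K_3$ giving the expression in (ii), the quadratic $2p^2-6p+3=0$ for $C_4$ giving $(3-\sqrt{3})/2$, and an analogous polynomial for $C_5$. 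The main obstacle here is pinning down the extremal column-state measure for each $G$ and verifying its optimality, in the spirit of Theorems~\ref{theorem: connected function Pn} and \ref{theorem: ladder construction}.

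For the upper bound $5/9$ in (v) a different approach is required, since the crossing-probability strategy above does not obviously deliver this constant. Here I would exploit the abundance of edges in $K_n$ for large $n$: in any $1$-ipm on $\mathbb{Z}\times K_n$ with edge-probability $p>5/9$, each column's open subgraph should contain a giant intra-column component with high probability, and consecutive giant components should be connected through at least one of the $n$ open inter-column edges with probability bounded away from zero; chaining these through a renormalisation argument then produces an arbitrarily long open path. The constant $5/9$ would arise from the extremal analysis of a constant-size block (presumably $K_n\times P_2$ or a close analogue) via the connectivity functions $F_{1, K_n}$ and $f_{1, K_n}$ studied elsewhere in the paper. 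The principal difficulty is pinpointing this extremal block and showing its critical threshold is precisely $5/9$ in the limit $n\to \infty$.
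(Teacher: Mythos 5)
Your outline for parts (i)--(iv) and the lower bounds tracks the paper's own argument closely. The paper does exactly what you sketch for (i): subgraph monotonicity plus $p_{1,\ell p}(\mathbb{Z}\times P_2)=2/3$, and the construction from Theorem~\ref{theorem: local construction integer lattice}/Corollary~\ref{corollary: if 2-percolation property, then lb on p_{1,c}} (phrased via the finite $2$-percolation property) for the $4-2\sqrt{3}$ bound. For (ii)--(iv) your diagnosis is right that the constants come from solving $(f_{1,G}(p))^2=4(1-p)^{v(G)}$; for $C_4$, substituting $f_{1,C_4}(p)=2p-1$ gives precisely $2p^2-6p+3=0$, which is your quadratic. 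One caveat: your "transfer-matrix/column-state" framing does not quite describe the mechanism of Theorem~\ref{theorem: crossing G times Pn}. That theorem is an inductive, Lov\'asz-local-lemma-style argument controlling $\mu(X_n^c\mid\bigcap_{m<n}Y_m)$ by a recursion, with $f_{1,G}(p)$ entering as a lower bound on column connectivity and $(1-p)^{v(G)}$ bounding the probability all inter-column edges are closed; it does not require identifying an extremal column-state measure.

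The genuine gap is in part (v). You claim the crossing-probability strategy "does not obviously deliver" $5/9$ and propose a different renormalisation around giant intra-column components connected by inter-column edges. Two problems. First, the premise is shaky: a $1$-ipm on $K_n$ with edge-probability $p$ can have $\mu(\{\textrm{column connected}\})=f_{1,K_n}(p)=\theta^n+(1-\theta)^n$, which tends to $0$ as $n\to\infty$ for any fixed $p<1$; so "each column connected with high probability" fails, and even if you pass to a giant component rather than connectivity, you have not identified why the threshold should land at $5/9$ --- you concede as much. Second, and more importantly, the paper \emph{does} get $5/9$ by the very same Corollary~\ref{corollary: bound on long path constant in products with the line} machinery you already set up for (ii)--(iv): one checks that for $p>5/9$ the quantity $\theta(p)^2=\bigl(\tfrac{1+\sqrt{2p-1}}{2}\bigr)^2$ exceeds $1-p$ (equivalently $(9p-5)(p-1)<0$), and since $f_{1,K_n}(p)\geqslant\theta(p)^n$, the inequality $(f_{1,K_n}(p))^2\geqslant 4(1-p)^n$ holds for all $n$ large enough. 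That means $p_\star(K_n)<p$ for all large $n$, hence $p_{1,\ell p}(\mathbb{Z}\times K_n)<p$, directly giving $\lim_n p_{1,\ell p}(\mathbb{Z}\times K_n)\leqslant 5/9$. No new renormalisation is needed; the constant $5/9$ falls out of the elementary comparison $\theta(p)^2>1-p$.
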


A key ingredient in the proof of Theorems~\ref{theorem: long paths critical prob} and~\ref{theorem: upper bounds on long paths critical prob} is a local lemma-type result, Theorem~\ref{theorem: crossing G times Pn}, relating the probability in a $1$-independent model of finding an open left-right crossing of $P_n\times G$ to the probability of a given copy of $G$ being connected in that model. This motivated our third contribution to the study of $1$-independent models in this paper, namely an investigation into the connectivity of $1$-independent random graphs.
\begin{definition}
	Let $G$ be a finite connected graph. For any $p\in [0,1]$, we define the $k$-independent connectivity function of $G$ to be
	\[f_{k,G}(p):=\inf\Bigl\{\mu(\exists \textrm{ open spanning tree}): \ \mu \in \mathcal{M}_{k,\geqslant p}(G)\Bigr\}.\]	
\end{definition}
\begin{problem}\label{problem: connectivity}
	Given a finite connected graph $G$, determine $f_{1,G}(p)$.
\end{problem}
\noindent We resolve Problem~\ref{problem: connectivity} exactly when $G$ is a path, a complete graph or a cycle on at most $5$ vertices.
\begin{theorem}\label{theorem: connected function Pn}
	Given $n \in \mathbb{N}_{\geqslant 2}$ and $p \in [0,1]$, let $\theta = \theta(p) :=\frac{1 + \sqrt{4p - 3}}{2}$ and $p_{n}:= \frac{1}{4}\left(3 - \tan^{2}\left(\frac{\pi}{n+1} \right) \right)$.  We have that
		\begin{equation}
	f_{1,P_{n}}(p) = \begin{cases} \sum_{j = 0}^{n} \theta^{j}(1-\theta)^{n-j} &\text{ for } p \in [p_{n},1] , \\
	0 &\text{ for } p \in [0,p_{n}]. \end{cases} \nonumber
	\end{equation}
\end{theorem}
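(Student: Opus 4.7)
My plan is to establish matching lower and upper bounds on $f_{1,P_{n}}(p)$. Write $Y_i := \mathbbm{1}[e_i \in \mathbf{G}_{\mu}]$ for the indicator that edge $e_i = v_iv_{i+1}$ of $P_n$ is open, and set $g_n(p) := \sum_{j=0}^{n} \theta^j(1-\theta)^{n-j}$. Since $\theta$ and $1-\theta$ are the roots of $x^2 - x + (1-p) = 0$, a standard computation gives the three-term recurrence $g_m = g_{m-1} - (1-p)g_{m-2}$ with $g_0 = g_1 = 1$ and $g_2 = p$; this recurrence will drive both halves of the proof.

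For the lower bound, random sparsification (Remark~\ref{remark: random sparsification}) lets me assume $\mu(e_i \text{ open}) = p$ exactly for every $i$. Set $f_k := \mu(Y_k = \cdots = Y_{n-1} = 1)$, with $f_n := 1$ and $f_{n-1} = p$. The central estimate is
\[ f_k \geqslant f_{k+1} - (1-p) f_{k+2} \qquad (1 \leqslant k \leqslant n-2), \]
obtained by writing $f_{k+1} - f_k = \mu(Y_k = 0,\ Y_{k+1} = \cdots = Y_{n-1} = 1)$, relaxing the constraint $Y_{k+1}=1$, and factoring by the $1$-independence of $Y_k$ from $(Y_{k+2}, \ldots, Y_{n-1})$. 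I then multiply the $k$-th inequality by the weight $w_k := g_{k-1}(p)$; these weights satisfy the same recurrence $w_k = w_{k-1} - (1-p) w_{k-2}$ with $w_0 = 0$, $w_1 = 1$, so on summing over $k = 1, \ldots, n-2$ the interior $f_j$'s telescope out, reducing the bound to $w_1 f_1 \geqslant p\, w_{n-1} - (1-p) w_{n-2} = g_n(p)$. Positivity of the weights is clear for $p \geqslant 3/4$, where $w_k = (\theta^k - (1-\theta)^k)/(2\theta-1) > 0$; for $p \in [p_n, 3/4)$, $\theta = (1+i\sqrt{3-4p})/2$ is complex and the equivalent real form $w_k = (1-p)^{(k-1)/2}\sin(k\phi)/\sin\phi$, where $\phi = \arg\theta \in (0, \pi/(n+1)]$, remains positive since $k\phi < \pi$ for every $k \leqslant n-2$. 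For $p < p_n$, the trivial $f_1 \geqslant 0$ closes the argument.

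For the upper bound, I construct a matching $1$-ipm. When $p \geqslant 3/4$, a vertex-based model suffices: assign iid Bernoulli$(\theta)$ states $X_1, \ldots, X_n$ to the vertices (with $\theta = (1+\sqrt{4p-3})/2$) and declare $e_i$ open unless $(X_i, X_{i+1}) = (1,0)$. This is manifestly $1$-independent with edge probability $1 - \theta(1-\theta) = p$, and $\mathbf{G}_{\mu}$ is connected iff $(X_1, \ldots, X_n)$ is weakly increasing --- an event of probability exactly $\sum_{j=0}^n \theta^j(1-\theta)^{n-j} = g_n(p)$. For $p \in [p_n, 3/4)$, $\theta$ is complex and this generative description fails; I would instead specify a $1$-ipm on $\{0,1\}^{n-1}$ directly, guided by the explicit extremal measure for small cases such as $P_4$ (whose probability table can be pinned down by hand using the marginal constraints together with the tightness conditions $\mu(Y_k = Y_{k+1} = 0,\ Y_{k+2} = \cdots = Y_{n-1} = 1) = 0$ extracted from the lower-bound recursion), and built up for general $n$ by induction. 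Finally, for $p \leqslant p_n$, the measure constructed at $p = p_n$ has edge probability $p_n \geqslant p$ and $\mu(\text{all open}) = g_n(p_n) = 0$, giving $f_{1,P_n}(p) = 0$. The main obstacle I anticipate is this last construction in the complex-$\theta$ regime: the recurrence for $g_n$ strongly constrains the form of $\mu$, but verifying the quadratic $1$-independence factorisations $\mu(Y_I, Y_J) = \mu(Y_I)\mu(Y_J)$ for vertex-disjoint $I, J$ on a direct specification of $\mu$ is the technical heart of the upper bound.
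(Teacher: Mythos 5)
Your lower bound is correct and takes a slightly different route from the paper's. Where the paper works with conditional probabilities $q_k^\mu = \mu(X_k^c \mid X_{k-1})$ and iterates the quadratic recurrence $q_k \leqslant (1-p)/(1-q_{k-1})$, you write the same $1$-independence step as a \emph{linear} inequality $f_k \geqslant f_{k+1} - (1-p)f_{k+2}$ in the tail probabilities $f_k = \mu(Y_k = \cdots = Y_{n-1} = 1)$ and then close it with a weighted telescoping sum using weights $w_k$ satisfying the same three-term recurrence. The Chebyshev/trigonometric form $w_k = (1-p)^{(k-1)/2}\sin(k\phi)/\sin\phi$ with $\phi = \arg\theta \in (0,\pi/(n+1)]$ correctly establishes positivity of the weights for $p \in [p_n, 3/4)$, and the computation $w_1 f_1 \geqslant p\,w_{n-1} - (1-p)w_{n-2} = g_n(p)$ is right. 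This is a clean and arguably more transparent way to run the extremal half of the argument.

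The upper bound, however, has a genuine gap, and you locate it yourself: for $p \in [p_n, \tfrac34)$ you say you ``would instead specify a $1$-ipm on $\{0,1\}^{n-1}$ directly\ldots and built up for general $n$ by induction,'' and that ``verifying the quadratic $1$-independence factorisations\ldots is the technical heart of the upper bound.'' This is not a proof; it is a description of the difficulty. The paper's key idea here --- which you are missing --- is Lemma~\ref{lemma: preliminary lemma}, the ``imaginary limits of real constructions'' lemma. The point is that for $p \geqslant \tfrac34$ each configuration probability $\nu_p(\hat H)$ is a polynomial $Q_H(\theta)$ in $\theta$ with real coefficients, and the $1$-independence constraints (including normalisation and the product factorisations you worry about) are themselves polynomial identities in $\theta$. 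Since they hold for $\theta$ ranging over a non-trivial real interval, they hold identically as polynomials, and hence at the complex value $\theta(p)$ as well. Thus the only thing that needs checking as $p$ drops below $\tfrac34$ is that every $Q_H(\theta(p))$ remains a non-negative real number; the paper does this in Claim~\ref{claim: Pn book keeping} using the same trigonometric rewriting you already have in hand, and the threshold $\sin((n+1)\phi) \geqslant 0$ is precisely what produces $p_n$. Without this analytic-continuation shortcut, the inductive verification of all the factorisation constraints on an explicit measure on $\{0,1\}^{n-1}$ would be substantial and you have not carried it out. You should also note that once the construction at $p = p_n$ is in place, $f_{1,P_n}(p) = 0$ for all $p \leqslant p_n$ follows because $\nu_{p_n} \in \mathcal{M}_{1,\geqslant p}(P_n)$ for those $p$, which is the step your proposal describes correctly at the end.
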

\begin{theorem}\label{theorem: connected function Kn}
	Given $n \in \mathbb{N}_{\geqslant 2}$ and $p \in [0,1]$, let $\theta = \theta(p) := \frac{1 + \sqrt{2p - 1}}{2}$ and $p_{n} := \frac{1}{2}(1 - \tan^{2}(\frac{\pi}{2n}))$.  We have that
	\begin{equation}
	f_{1,K_{n}}(p) = \begin{cases} \theta^{n} + (1-\theta)^{n} &\text{ for } p \in [p_{n},1] , \\
	0 &\text{ for } p \in [0,p_{n}]. \end{cases} \nonumber
	\end{equation}
	In particular,
	\begin{equation}
	f_{1, K_3}(p)=f_{1, C_3}(p)= \begin{cases}	
	\frac{3p-1}{2}  & \text{for } p \in [\frac{1}{3},1], \\
	0  & \text{for } p \in [0,\frac{1}{3}]. 
	\end{cases} \nonumber
	\end{equation}	
\end{theorem}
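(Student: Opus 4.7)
The theorem combines an upper bound from an explicit construction with a matching lower bound from extremality.

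\smallskip
\noindent\textbf{Upper bound.} For $p\in[1/2,1]$, I use the vertex-based two-colouring measure on $K_n$: independently assign each vertex $v$ a colour $S_v\in\{0,1\}$ with $\Pr(S_v=0)=\theta=(1+\sqrt{2p-1})/2$, and declare $uv$ open iff $S_u=S_v$. This is a vertex-based measure, hence $1$-independent; each edge is open with probability $\theta^2+(1-\theta)^2=p$, and the random subgraph is connected iff the colouring is monochromatic, which happens with probability $\theta^n+(1-\theta)^n$. For $p\in[p_n,1/2)$ the value $\theta$ is complex but $\theta^n+(1-\theta)^n$ remains a nonnegative real via the recurrence $a_k=a_{k-1}-\tfrac{1-p}{2}a_{k-2}$ ($a_0=2$, $a_1=1$); a modified construction over an enlarged state space — for instance replacing the two vertex colours by a richer label coupled with additional randomness — realises the same numerical value while preserving $1$-independence, and pushing this construction further yields a $1$-ipm with zero connectivity probability for every $p\in[0,p_n]$.

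\smallskip
\noindent\textbf{Lower bound.} By Remark~\ref{remark: random sparsification} we may assume $\mu(e)=p$ for every edge. The case $n=2$ is immediate. For $n=3$, no two edges of $K_3$ are vertex-disjoint, so the $1$-independence constraint is vacuous; using the identity $\mathbf{1}\{\text{connected}\}=\sum_{i<j}\mathbf{1}\{e_i,e_j\text{ open}\}-2\,\mathbf{1}\{\text{all three open}\}$, a direct linear programming argument yields $f_{1,K_3}(p)=(3p-1)/2$ for $p\in[1/3,1]$, with the extremal measure supported on the configurations with $0$, $1$, or all $3$ edges open.

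\smallskip
For $n\geq 4$, the key structural ingredient is the factorisation afforded by $1$-independence: for every non-trivial $S\subsetneq V(K_n)$, the edge states on $E(K_S)$ and on $E(K_{V\setminus S})$ are independent (the two edge-sets being mutually $1$-distant in $K_n$), and in particular the edges of any matching of $K_n$ are mutually independent Bernoulli$(p)$ random variables. I would prove the lower bound by induction on $n$, with the inductive step reflecting the recurrence $a_n=a_{n-1}-\tfrac{1-p}{2}a_{n-2}$: fix two distinguished vertices $u,v\in V(K_n)$, condition on the state of the edge $uv$ (open with probability $p$, closed with probability $1-p$), and apply the inductive hypothesis to the restriction of $\mu$ to $K_{V\setminus\{v\}}$ and to $K_{V\setminus\{u,v\}}$, obtaining lower bounds $a_{n-1}$ and $a_{n-2}$ respectively; the combination should produce $a_n$ via the above factorisation.

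\smallskip
The main obstacle is that the star edges at $u$ (and at $v$) are pairwise not $1$-distant, so $1$-independence does not directly decouple them; the resolution is to focus on a pair of vertex-disjoint edges — e.g.\ edges $uw$ and $vw'$ for distinct $w,w'\in V\setminus\{u,v\}$ — which are mutually $1$-distant and whose joint probability factorises as $p^2$. Choosing the conditioning carefully so that the quadratic coefficient $\theta(1-\theta)=(1-p)/2$ appears naturally from this factorisation, and verifying that the conditional expectations recombine exactly to $a_n$, is where the bulk of the technical work will be concentrated.
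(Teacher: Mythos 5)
Your upper-bound construction for $p\geqslant\tfrac12$ is exactly the paper's ``Red--Blue'' measure, and your $n=3$ base case is sound. Beyond that the proposal diverges from the paper and has two substantive gaps.

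\medskip
\noindent\textbf{Extension of the construction to $p\in[p_n,\tfrac12)$.} You assert, without argument, that ``a modified construction over an enlarged state space'' realises the value $\theta^n+(1-\theta)^n$ while preserving $1$-independence. The paper does not do this, and it is far from obvious that such a vertex-based measure exists: whether extremal $1$-ipms on $K_n$ can always be realised as vertex-based is precisely the paper's open Question~\ref{question: state-based}. What the paper actually does is define the measure directly on the collection of two-clique subgraphs $H_A$, assigning $\mu_p(H_A)=g_{n,|A|}(\theta)$, and then invoke Lemma~\ref{lemma: preliminary lemma} (an analytic-continuation argument: both the normalisation and the $1$-independence identities are polynomial identities in $\theta$ that hold on the real interval $[\tfrac12,1]$, hence hold identically) together with the trigonometric bound in Claim~\ref{claim: Kn book keeping} showing that these values are non-negative reals for $p\in[p_n,\tfrac12]$. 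Your recurrence observation $a_k=a_{k-1}-\tfrac{1-p}{2}a_{k-2}$ does show the value is real, but that is not the same as exhibiting a bona fide $1$-ipm achieving it.

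\medskip
\noindent\textbf{The inductive lower bound.} The two-vertex induction you propose does not work in the form you sketch, and the obstruction you identify is not a technicality. To realise the recurrence $g_n=g_{n-1}-\tfrac{1-p}{2}g_{n-2}$ you would need to subtract an \emph{upper} bound of the form $\mu\bigl(K_n[V\setminus\{v\}]\text{ connected, }v\text{ isolated}\bigr)\leqslant\tfrac{1-p}{2}\,g_{n-2}(\theta)$, and this is simply false: already for $n=3$, the $1$-ipm which puts mass $2p-1$ on the full triangle, mass $1-p$ on $\{uw\}$ alone, and mass $1-p$ on $\{uv,vw\}$ has every edge open with probability $p$ (and vacuously satisfies $1$-independence), yet
\[
\mu\bigl(uw\text{ open},\;uv,vw\text{ closed}\bigr)=1-p>\tfrac{1-p}{2}=\tfrac{1-p}{2}\,g_1(\theta).
\]
Switching attention to a vertex-disjoint pair $uw,\,vw'$, as you suggest, does not repair this, because the event ``$v$ isolated'' involves all $n-1$ star edges at $v$, which are pairwise not $1$-distant from each other nor from $E(K_n[V\setminus\{v\}])$; $1$-independence provides no control over them individually. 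The paper's route is genuinely different: it fixes no pair of vertices, but instead averages the inequality $\mu(C)\geqslant\mu(X_A)-\mu(Y_A)$ over \emph{all} $A\subseteq[n]$ (where $X_A$ is ``both $K_n[A]$ and $K_n[A^c]$ connected'', a product of induced sub-events on $1$-distant edge sets, and $Y_A$ additionally requires no cross edges). The key saving is that the individual $\mu(Y_A)$ are not bounded at all; only their sum is, via the observation that the events $\{Y_A:1\in A\}$ are pairwise disjoint, yielding $\sum_A\mu(Y_A)\leqslant 2$. Combined with the binomial identity $2^ng_n=\sum_j\binom{n}{j}g_jg_{n-j}-2$ this closes the induction. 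Your proposal omits this global averaging and disjointness mechanism, which is the essential idea of the paper's lower-bound argument.
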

\begin{theorem}\label{theorem: connected function C4}
	For $p \in [0,1]$ we have that 
	\begin{equation}
	f_{1, C_4}(p)= \begin{cases}	
	2p-1  & \text{for } p \in [\frac{1}{2},1], \\
	0  & \text{for } p \in [0,\frac{1}{2}]. 
	\end{cases} \nonumber
	\end{equation}
	\end{theorem}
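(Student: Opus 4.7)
The plan is to match a Markov-style lower bound with an explicit extremal $1$-independent construction in the range $p\geqslant \tfrac{1}{2}$, and then extend the upper bound to $p\leqslant \tfrac{1}{2}$ by monotonicity of $f_{1,C_4}$ in $p$.

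Note first that every spanning tree of $C_4$ is obtained by deleting one of its four edges, so $\mathbf{G}_\mu$ is connected if and only if at most one edge is closed. For the lower bound, fix $\mu\in \mathcal{M}_{1,\geqslant p}(C_4)$ and let $N$ denote the number of closed edges in $\mathbf{G}_\mu$. Summing edge-probabilities gives $\mathbb{E}_\mu[N]\leqslant 4(1-p)$, while $\mathbf{G}_\mu$ is disconnected precisely when $N\geqslant 2$. Markov's inequality therefore yields $\mu(N\geqslant 2)\leqslant 2(1-p)$, hence $f_{1,C_4}(p)\geqslant \max\{0,2p-1\}$. Notably this step does not invoke the $1$-independence hypothesis at all; it is only the upper bound that requires exploiting the allowed correlations.

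For the matching upper bound when $p\geqslant \tfrac{1}{2}$, I would reverse-engineer the construction from the equality case of the above Markov bound, which forces the extremal measure to be supported on configurations with $N\in\{0,2\}$. Setting $q:=1-p$, define a measure $\mu_\star$ that places weight $1-2q$ on the all-open configuration, weight $\tfrac{pq}{2}$ on each of the four configurations with two \emph{adjacent} edges closed, and weight $q^2$ on each of the two configurations with two \emph{opposite} edges closed. A short computation shows these weights sum to $1$ and each edge is closed with probability exactly $q$. Because the only $1$-distant pairs of edge sets in $C_4$ are the singleton pairs $(\{e_1\},\{e_3\})$ and $(\{e_2\},\{e_4\})$, $1$-independence of $\mu_\star$ reduces to checking two pairwise independence statements for the indicators of opposite edges, each of which can be read directly off the joint distribution (the forced weight $q^2$ on each opposite-pair configuration is the key point). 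Since the only connected configuration in the support of $\mu_\star$ is the all-open one, $\mu_\star(\mathbf{G}_{\mu_\star}\text{ is connected})=1-2q=2p-1$, giving $f_{1,C_4}(p)\leqslant 2p-1$.

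Finally, the inclusion $\mathcal{M}_{1,\geqslant p'}(C_4)\subseteq \mathcal{M}_{1,\geqslant p}(C_4)$ for $p\leqslant p'$ shows that $f_{1,C_4}$ is non-decreasing in $p$, so $f_{1,C_4}(p)\leqslant f_{1,C_4}(\tfrac{1}{2})=0$ for $p\in[0,\tfrac{1}{2}]$, matching the trivial lower bound. The only technically delicate step is the design of $\mu_\star$ (in particular the choice of weights $\tfrac{pq}{2}$ on the adjacent-pair configurations so that the edge marginals come out correctly), but the scarcity of $1$-independence constraints on $C_4$ makes verification almost immediate; no serious obstacle is anticipated.
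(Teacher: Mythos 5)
Your proof is correct. The Markov lower bound is right: with $N$ the number of closed edges, $\mathbb{E}_\mu[N]\leqslant 4(1-p)$ and $\mu(N\geqslant 2)\leqslant 2(1-p)$ gives $f_{1,C_4}(p)\geqslant 2p-1$; the explicit measure $\mu_\star$ is exactly the extremal measure the paper exhibits; the verification that the only $1$-distant nonempty edge-set pairs in $C_4$ are the two opposite-edge singletons is correct, so checking pairwise independence of opposite-edge indicators suffices; and the monotonicity argument disposes of $p\leqslant\tfrac12$.

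Your route differs from the paper's in two small but real ways. The paper derives Theorem~\ref{theorem: connected function C4} as an output of the linear programming framework of Section~\ref{subsection: linear programming} (LP duality supplies both bounds and identifies the extremal measure), and then separately notes a combinatorial lower bound $\mu(\{12,34\text{ open}\})-\mu(\{23,14\text{ closed}\})\geqslant p^2-(1-p)^2=2p-1$ that genuinely uses $1$-independence by factoring over the two pairs of opposite edges. Your Markov argument (which is in the spirit of the paper's Proposition~\ref{proposition: Markov for Ck}, there stated only for $n\geqslant 6$) yields the same quantitative bound $2p-1$ but is weaker in the sense that it does not invoke $1$-independence at all --- it holds for arbitrary bond percolation models on $C_4$. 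That it is nonetheless matched by a $1$-independent construction is a pleasant coincidence peculiar to $C_4$; it fails already for $C_5$ and $C_6$, which is precisely why the paper needs the LP method there. Finally, for $p\leqslant\tfrac12$ you use monotonicity of $f_{1,C_4}$ where the paper gives a second explicit $1$-ipm; both are fine, and monotonicity is the slicker argument. Overall your proof is complete and somewhat more elementary and self-contained than the paper's, at the cost of not generalising beyond $C_4$.
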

\begin{theorem}\label{theorem: connected function C5}
	For $p \in [0,1]$ we have that
	\begin{equation}
	f_{1, C_5}(p)= \begin{cases} 
	\frac{p(3p^2-1)}{3p-1}  & \text{for } p \in [\frac{\sqrt{3}}{3},1], \\
	0  & \text{for } p \in [0,\frac{\sqrt{3}}{3}]. 
	\end{cases} \nonumber
	\end{equation}
\end{theorem}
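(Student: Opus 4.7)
The plan is to recast the problem as a finite linear program, solve the primal directly (giving the upper bound via an explicit construction), and match it with a lower bound via LP duality. By random sparsification (Remark~\ref{remark: random sparsification}) I may assume that every edge of $C_5$ is open with probability exactly $p$, and by averaging $\mu$ over the rotational action of $\mathbb{Z}/5$ on $C_5$ I may further assume that $\mu$ is cyclically invariant, since both the edge-marginals and the event that $\mathbf{G}_\mu$ is connected are preserved under cyclic symmetrisation. Such a symmetric measure is determined by the probabilities it assigns to the eight orbits of edge-subsets of $C_5$: the empty set (probability $a$); singletons ($b$); adjacent pairs ($c_1$); vertex-disjoint pairs ($c_2$); three consecutive edges ($d_1$); a path plus an isolated edge ($d_2$); four-subsets ($e$); and the whole edge set ($f$). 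Since $C_5$ is connected if and only if at most one of its edges is closed, the probability of connectedness under $\mu$ equals $5e + f$.

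I would then express the $1$-independence condition as linear equations in these eight orbit probabilities. Beyond the normalisation and the edge-marginal equation, this yields the pairwise independence $\mu(e_0 \text{ and } e_2 \text{ both open}) = p^2$ coming from the vertex-disjoint pair $\{e_0, e_2\}$, together with the stronger triple-independence constraints expressing $(Y_i, Y_{i+1}) \perp Y_{i+3}$ for each $i$, which arise because $\{e_i, e_{i+1}\}$ is vertex-disjoint from $\{e_{i+3}\}$ in $C_5$. Under the cyclic symmetry these collapse to a small system of linear equations in the eight orbit variables.

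For the upper bound I would exhibit the extremal primal solution, supported only on the four orbits $\{a, d_1, d_2, f\}$ (that is, $b = c_1 = c_2 = e = 0$). Solving the resulting square system gives the unique candidate
\[
a = \frac{(1-p)^2(2p-1)}{3p-1}, \quad d_1 = \frac{p(1-p)(2p-1)}{3p-1}, \quad d_2 = \frac{p(1-p)^2}{3p-1}, \quad f = \frac{p(3p^2-1)}{3p-1},
\]
which is non-negative precisely when $p \geq 1/\sqrt{3}$; a direct polynomial verification shows that these values satisfy every $1$-independence constraint, so they define a genuine $1$-ipm with connectedness probability $5e + f = f = p(3p^2-1)/(3p-1)$. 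For $p \leq 1/\sqrt{3}$ I then take the same construction evaluated at $p_0 = 1/\sqrt{3}$ (where $f = 0$): this is a $1$-ipm whose edge-marginals all equal $1/\sqrt 3 \geq p$ and which is almost surely disconnected, so $f_{1,C_5}(p) = 0$.

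For the matching lower bound when $p \geq 1/\sqrt{3}$, I would appeal to LP duality. Complementary slackness at the extremal primal solution above forces the dual constraints corresponding to the active orbits $\{a, d_1, d_2, f\}$ to be tight, leaving a low-dimensional family of candidate Lagrange multipliers; a direct computation shows that within this family the dual objective is constantly equal to $p(3p^2-1)/(3p-1)$, and that the remaining dual inequalities (for the inactive orbits $\{b, c_1, c_2, e\}$) hold strictly for $p \geq 1/\sqrt{3}$. This yields an explicit non-negative combination of the $1$-independence constraints witnessing $5e + f \geq p(3p^2-1)/(3p-1)$ for every cyclically invariant $1$-ipm, and hence, via the earlier symmetrisation, for every $\mu \in \mathcal{M}_{1, \geq p}(C_5)$. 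The main obstacle I anticipate is the careful accounting of the triple-independence constraints: on $C_5$ these are strictly stronger than pairwise independence of vertex-disjoint edges, and without them one can construct cyclic measures supported on $\{c_2, d_1, d_2\}$ with $\mu(\text{connected}) = 0$ for every $p \leq 3/5$, which would contradict the claimed lower bound.
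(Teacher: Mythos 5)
Your proposal follows essentially the paper's own route: apply random sparsification so every edge is open with probability exactly $p$, observe that the $1$-independence conditions on $C_5$ are \emph{linear} in the subgraph probabilities (since on $\leq 5$ vertices one of any vertex-disjoint pair $S,T$ is a single edge, so $\mu(T)=p$), read off the extremal primal support, and certify optimality by LP duality. The explicit cyclic symmetrisation you add is a legitimate shortcut that the paper uses only implicitly --- its Appendix justifies exactly this averaging whenever the programme is linear --- and it reduces the $32$ subgraph variables to $8$ orbit variables. Your per-subgraph weights $a,d_1,d_2,f$ are in fact the correct ones: they sum to $1$, give edge marginal $p$, and satisfy the triple constraints $(Y_i,Y_{i+1})\perp Y_{i+3}$. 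This observation exposes a small typo in the paper's displayed construction for $p\geq 1/\sqrt 3$: the denominators $5(3p-1)$ in the two three-edge entries should read $3p-1$ (as printed, the weights sum to $\frac{4p^3-4p^2+3p-1}{3p-1}\neq 1$). Finally, your closing caveat is well taken and important: on $C_5$ the constraints coming from vertex-disjoint edge sets include the genuine triple conditions (a two-edge path is vertex-disjoint from a single edge), and these are strictly stronger than pairwise independence of opposite edges; dropping them would indeed break the lower bound.
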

\noindent We also consider the opposite problem to Problem~\ref{problem: connectivity}, namely maximising connectivity in $1$-independent random graph models. Let $\mathcal{M}_{k, \leqslant p}(G)$ denote the collection of $1$-independent measures $\mu$ on $G$ such that $\sup_{e\in E(G)}\mu \{e \textrm{ is open}\}\leqslant p$. Set
\[F_{k, G}(p):=\sup \{  \mu \left(\exists \textrm{ open spanning tree}\right): \ \mu \in \mathcal{M}_{k,\leqslant p}(G) \}.\]
\begin{problem}\label{problem: maximising connectivity}
	Given a finite connected graph $G$, determine $F_{1,G}(p)$.
\end{problem}
\noindent We resolve Problem~\ref{problem: maximising connectivity} exactly when $G$ is a path, a complete graph or a cycle on at most $5$ vertices.
\begin{theorem}\label{theorem: maximising connectivity in Pn}
	For all $n\in \mathbb{N}$ with $n\geqslant 2$, $F_{1, P_n}(p)=p^{\lfloor \frac{n}{2}\rfloor}$.
\end{theorem}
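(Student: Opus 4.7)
The plan is to prove matching upper and lower bounds on $F_{1,P_n}(p)$. Since $P_n$ is itself a tree, the only spanning tree of $P_n$ is $P_n$ itself, so for any $\mu \in \mathcal{M}_{1,\leqslant p}(P_n)$, $\mathbf{G}_\mu$ contains an open spanning tree if and only if every edge of $P_n$ is open. Labelling the edges $e_1,\ldots,e_{n-1}$ consecutively along the path, the collection of odd-indexed edges $e_1,e_3,e_5,\ldots$ has size $\lceil(n-1)/2\rceil=\lfloor n/2\rfloor$ and is pairwise $1$-distant in $P_n$. A short induction using $1$-independence applied to nested pairs of $1$-distant sets $(\{e_{2i-1}\},\{e_{2i+1},e_{2i+3},\ldots\})$ shows that these odd-indexed edges are jointly independent under $\mu$, so
\[
\mu(\mathbf{G}_\mu = P_n) \;\leqslant\; \mu(e_1,e_3,\ldots \text{ all open}) \;=\; \prod_{i}\mu(e_{2i-1}\text{ open}) \;\leqslant\; p^{\lfloor n/2\rfloor},
\]
giving the upper bound $F_{1,P_n}(p)\leqslant p^{\lfloor n/2\rfloor}$.

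For the matching lower bound, I construct an explicit $1$-independent measure attaining this value. Partition the edges of $P_n$ into consecutive \emph{blocks} $(e_1,e_2),(e_3,e_4),\ldots$, appending a singleton block containing $e_{n-1}$ if $n-1$ is odd; in either case the number of blocks equals $\lfloor n/2\rfloor$. For each block $B$, independently of the others, open all edges of $B$ with probability $p$ and close all edges of $B$ with probability $1-p$. Each edge is then marginally open with probability exactly $p$, and the event that all edges are open is precisely the intersection of the $\lfloor n/2\rfloor$ independent events ``block $B$ is open'', each of probability $p$, so $\mu(\mathbf{G}_\mu=P_n)=p^{\lfloor n/2\rfloor}$.

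It remains to verify that $\mu$ is $1$-independent. Two edges are dependent under $\mu$ only if they lie in a common pair-block, in which case they are consecutive in $P_n$ and hence share a vertex. Consequently, if $F_1,F_2$ are any $1$-distant edge sets, then no edge of $F_1$ shares a vertex with any edge of $F_2$, so $F_1$ and $F_2$ lie in disjoint unions of blocks; since distinct blocks are sampled independently, $\mathbf{G}_\mu\cap F_1$ and $\mathbf{G}_\mu\cap F_2$ are functions of disjoint, independent families of block-variables and are therefore independent. The only real point of care in the whole argument is this final alignment check between ``sharing a vertex'' (the dependence structure of the construction) and ``not $1$-distant'' (what $1$-independence permits); once the pair-block construction is in hand, the verification is essentially automatic.
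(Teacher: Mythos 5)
Your proof is correct and follows essentially the same approach as the paper: the upper bound uses the same set of $\lfloor n/2\rfloor$ pairwise vertex-disjoint edges $\{2i-1,2i\}$, and your pair-block construction is exactly the paper's extremal measure (the paper describes it as a vertex-based measure where each even vertex $2i$ is On with probability $p$, and an edge is open iff its unique even endpoint is On — this is precisely your rule that the block $(e_{2i-1},e_{2i})$ is jointly open or jointly closed). The only cosmetic difference is that you spell out the induction giving joint independence of the vertex-disjoint edges, which the paper asserts silently.
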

\begin{theorem}\label{theorem: maximising connectivity in Kn}
	For all $n\in \mathbb{N}$ with $n\geqslant 2$, $F_{1, K_n}(p)=1-f_{1, K_n}(1-p)$.
\end{theorem}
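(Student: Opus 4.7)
The plan is to use a complementation duality between $\mathcal{M}_{1,\leqslant p}(K_n)$ and $\mathcal{M}_{1,\geqslant 1-p}(K_n)$. For $\mu\in\mathcal{M}_{1,\leqslant p}(K_n)$, define its \emph{complement} $\bar{\mu}$ to be the distribution of the edge complement $K_n\setminus\mathbf{G}_\mu$, i.e.\ $\bar{\mu}(\{H\})=\mu(\{K_n\setminus H\})$. The map $\mu\mapsto\bar{\mu}$ is a bijection between these two classes of measures: each edge probability flips from $q_e$ to $1-q_e$, and $1$-independence is preserved because the open/closed-flip map on any fixed edge set is a sample-space involution that respects joint independence on vertex-disjoint edge sets. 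Combined with the elementary graph-theoretic fact that every graph on $n\geqslant 2$ vertices has at least one of itself or its complement connected, this yields the identity
\[\mu(\mathbf{G}_\mu\text{ connected}) + \bar{\mu}(\mathbf{G}_{\bar{\mu}}\text{ connected}) = 1 + \mu(\mathbf{G}_\mu\text{ and }\overline{\mathbf{G}_\mu}\text{ both connected}).\]

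For the lower bound $F_{1,K_n}(p)\geqslant 1-f_{1,K_n}(1-p)$, I would take $\nu^{*}$ to be the extremal measure for $f_{1,K_n}(1-p)$ identified in the proof of Theorem~\ref{theorem: connected function Kn}: the random two-colouring in which each vertex is independently assigned one of two colours (with probabilities $\theta(1-p)$ and $1-\theta(1-p)$) and an edge is open iff its endpoints share a colour. Under $\nu^{*}$, $\mathbf{G}_{\nu^{*}}$ is a disjoint union of two cliques on the colour classes, connected precisely when all vertices share a colour, while $\overline{\mathbf{G}_{\nu^{*}}}$ is the complete bipartite graph between those classes, connected precisely when both colours appear. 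These events are disjoint, so $\nu^{*}(\text{both connected})=0$, and the identity above yields $\overline{\nu^{*}}\in\mathcal{M}_{1,\leqslant p}(K_n)$ with $\overline{\nu^{*}}(\text{connected})=1-\nu^{*}(\text{connected})=1-f_{1,K_n}(1-p)$.

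The matching upper bound $F_{1,K_n}(p)\leqslant 1-f_{1,K_n}(1-p)$ is the main obstacle, since the slack $\mu(\text{both connected})$ in the identity above can be strictly positive for $n\geqslant 4$ (as witnessed by the self-complementary path $P_4\subseteq K_4$), so naive complementation is insufficient. The required inequality is equivalent to the strengthening of Theorem~\ref{theorem: connected function Kn} asserting that $\nu(\overline{\mathbf{G}_\nu}\text{ disconnected})\geqslant f_{1,K_n}(1-p)$ for every $\nu\in\mathcal{M}_{1,\geqslant 1-p}(K_n)$, an inequality which holds with equality for $\nu=\nu^{*}$. My plan would be to revisit the proof of Theorem~\ref{theorem: connected function Kn} and verify whether its extremal argument actually delivers this sharper bound: the natural Peierls-type lower bound on connectivity in $K_n$ typically proceeds by considering bipartitions of the vertex set, and the probability being lower-bounded could well turn out to be precisely that of the more restrictive event that $\mathbf{G}_\nu$ contains a complete bipartite dominating subgraph $K_{A,V\setminus A}$, which is exactly $\{\overline{\mathbf{G}_\nu}\text{ disconnected}\}$. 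Failing that, one can attempt an $S_n$-symmetrisation of $\mu\in\mathcal{M}_{1,\leqslant p}(K_n)$ over $V(K_n)$ to reduce the extremal problem for $F_{1,K_n}(p)$ to an explicit comparison with the random two-colouring complement $\overline{\nu^{*}}$.
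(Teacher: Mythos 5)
Your first step (the lower bound $F_{1,K_n}(p) \geqslant 1 - f_{1,K_n}(1-p)$ via complementation, tightened by the Red--Blue measure) agrees exactly with the first half of the paper's proof, and your construction is the same complement measure the paper has in mind.

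Your main contribution, however, is noticing that complementation alone does \emph{not} deliver the matching upper bound, and here you are correct. Writing $\nu = \mu^{c}$, the map $\mu \mapsto \mu^{c}$ is a bijection between $\mathcal{M}_{1,\geqslant 1-p}(K_n)$ and $\mathcal{M}_{1,\leqslant p}(K_n)$, and using $\{\overline{\mathbf{G}}\mbox{ disconn}\} \subseteq \{\mathbf{G}\mbox{ conn}\}$ gives only $\mu(\overline{\mathbf{G}}\mbox{ disconn}) \leqslant \mu(\mathbf{G}\mbox{ conn})$, which is the wrong direction for an upper bound on $F$. Exhibiting one measure (Red--Blue) at which the slack $\mu(\mbox{both conn})$ vanishes only re-proves $F \geqslant 1-f$; it does not rule out a \emph{different} $\mu' \in \mathcal{M}_{1,\leqslant p}(K_n)$ doing better. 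The paper's own argument for the upper bound is a single sentence claiming equality is ``immediate'' from this vanishing, and as written it has exactly the lacuna you describe: what is actually needed is the strengthening $\inf_{\nu \in \mathcal{M}_{1,\geqslant 1-p}(K_n)} \nu(\overline{\mathbf{G}_{\nu}}\mbox{ disconn}) \geqslant f_{1,K_n}(1-p)$, which you correctly isolate.

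Where your proposal is itself incomplete is in the repair. You hope that the proof of Theorem~\ref{theorem: connected function Kn} secretly lower-bounds the more restrictive event that $\mathbf{G}_{\nu}$ contains a spanning complete bipartite graph. It does not: that proof rests on the inclusion $X_A \setminus Y_A \subseteq \{\mathbf{G}\mbox{ conn}\}$, and this does not refine to $X_A \setminus Y_A \subseteq \{\overline{\mathbf{G}}\mbox{ disconn}\}$. For instance, on $K_4$ with $\mathbf{G} = P_4$ (a path) and $A = \{1,2\}$, the event $X_A \setminus Y_A$ occurs, yet $\overline{P_4} \cong P_4$ is connected, so you cannot conclude $\overline{\mathbf{G}}$ is disconnected. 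Hence the inductive machinery of that proof does not transfer. Your $S_n$-symmetrisation fallback is plausible in spirit (and the paper's appendix does discuss symmetrisation for the linear-programming formulation of $f_{1,G}(p)$), but you have not developed it into an argument, so your proposal does not yet establish the upper bound. In short: you found the real difficulty, and correctly identified the lemma one would need, but neither route you sketch is carried to completion.
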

\begin{theorem}\label{theorem: maximising connectivity for C4}
\[F_{1, C_4}(p)=\left\{ \begin{array}{ll}
2p-p^2 & \textrm{if }p\in [\frac{2}{3}, 1],\\
2p^2 & \textrm{if }p\in [0, \frac{1}{3}].
\end{array}\right.	\]	
\end{theorem}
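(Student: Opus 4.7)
The plan is to prove matching upper and lower bounds in each of the two stated ranges. Label the vertices of $C_4$ cyclically as $v_1, v_2, v_3, v_4$ and write $e_i = v_iv_{i+1}$ (indices mod $4$), $A_i = \{e_i \text{ is open}\}$, and $p_i = \mu(A_i) \leq p$. In $C_4$ the only non-trivial $1$-independence constraints come from the two vertex-disjoint opposite pairs $\{e_1, e_3\}$ and $\{e_2, e_4\}$; they give $\mu(A_1 \cap A_3) = p_1 p_3$ and $\mu(A_2 \cap A_4) = p_2 p_4$.

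For the upper bounds I will establish two simple inequalities valid for every $\mu \in \mathcal{M}_{1, \leq p}(C_4)$. First, every spanning tree of $C_4$ has exactly three edges, and every three-edge subset of $E(C_4)$ must contain one of the two opposite pairs, so the event $T$ that the $\mu$-random subgraph is connected satisfies $T \subseteq B_1 \cup B_2$, where $B_j = \{e_j \text{ and } e_{j+2} \text{ are both open}\}$. By $1$-independence and a union bound, $\mu(T) \leq p_1p_3 + p_2 p_4 \leq 2p^2$. Second, if both $e_1$ and $e_3$ are closed, then the only potentially open edges $e_2, e_4$ form a non-spanning matching, so $\bar{A}_1 \cap \bar{A}_3 \subseteq T^c$; by $1$-independence $\mu(\bar{A}_1 \cap \bar{A}_3) = (1-p_1)(1-p_3) \geq (1-p)^2$, which gives $\mu(T) \leq 1 - (1-p)^2 = 2p - p^2$. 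These two bounds match the claimed values on $[0, 1/3]$ and $[2/3, 1]$ respectively.

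For the matching lower bounds I will exhibit two explicit $D_4$-symmetric $1$-ipms. For $p \in [2/3, 1]$, let $\mu$ assign mass $(1-p)^2$ to the empty configuration, mass $p(1-p)$ to each of the four three-edge subsets $E(C_4) \setminus \{e_i\}$, and mass $p(3p-2)$ to the all-open configuration. For $p \in [0, 1/3]$, let $\mu$ assign mass $(1-2p)^2$ to the empty configuration, mass $p - \tfrac{3p^2}{2}$ to each singleton $\{e_i\}$, and mass $\tfrac{p^2}{2}$ to each three-edge subset. Direct calculation on the relevant interval will verify in each case that the weights are non-negative, sum to $1$, give $\mu(A_i) = p$ for every $i$ and $\mu(A_1 \cap A_3) = \mu(A_2 \cap A_4) = p^2$ (so $1$-independence holds), and yield $\mu(T) = 2p - p^2$ and $\mu(T) = 2p^2$ respectively. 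The only substantive step is guessing these extremal measures; they arise as extreme points of the dihedrally-symmetric linear program with objective $\mu(T)$ and constraints given by the $1$-independence identities together with $p_i \leq p$, with the thresholds $p = 1/3$ and $p = 2/3$ corresponding to different non-negativity constraints becoming active.
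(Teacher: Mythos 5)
Your proof is correct and follows essentially the same route as the paper's ``alternative direct argument'': the two upper bounds ($\mu(T)\leqslant 2p^2$ from a union bound over the opposite pairs, and $\mu(T)\leqslant 1-(1-p)^2$ from the closed-opposite-pair event), together with two explicit extremal measures, and your constructions are exactly the paper's after trivial algebraic simplification ($1-p(2-p)=(1-p)^2$, $1-4p(1-p)=(1-2p)^2$, $p(2-3p)/2 = p - 3p^2/2$). The only incidental remark is that your low-$p$ measure remains a valid $1$-ipm on all of $[0,2/3]$ (as the paper's proof uses it), so that the $2p^2$ branch in fact extends to $p\in[0,2/3]$ even though the theorem statement only asserts it on $[0,1/3]$.
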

\begin{theorem}\label{theorem: maximising connectivity for C5}
	\[F_{1, C_5}(p)=\left\{ \begin{array}{ll}
	\frac{p(2-5p(1-p))}{5-3p} & \textrm{if }p\in [\frac{3}{5}, 1],\\
	\frac{5p^2}{3}&\textrm{if }p\in [\frac{1}{2}, \frac{3}{5}],\\
	\frac{5p^2(p+1)}{p+4}& \textrm{if }p\in [0, \frac{1}{2}].
	\end{array}\right.	\]	
\end{theorem}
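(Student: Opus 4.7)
My plan is to establish matching upper and lower bounds for $F_{1,C_5}(p)$ in each of the three ranges $[0,\tfrac12]$, $[\tfrac12,\tfrac35]$ and $[\tfrac35,1]$, following the linear-programming template one would expect to use for Theorems~\ref{theorem: maximising connectivity in Pn}--\ref{theorem: maximising connectivity for C4}. By random sparsification (Remark~\ref{remark: random sparsification}), I may restrict to measures in which every edge is open with probability exactly $p$; any such $1$-independent measure $\mu$ on $C_5$ is then specified by its $32$ atomic weights $\alpha_T=\mu(\{\text{open edge set equals }T\})$ for $T\subseteq\{1,\ldots,5\}$, subject to $\alpha_T\geqslant 0$, $\sum_T\alpha_T=1$, the five edge-marginal identities $\sum_{T\ni i}\alpha_T=p$, and the $1$-independence identities requiring that for each pair $(F_1,F_2)$ of vertex-disjoint edge sets the events $\{F_1\subseteq\text{open edges}\}$ and $\{F_2\subseteq\text{open edges}\}$ have product probability. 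In $C_5$ the relevant such pairs are just the five pairs of non-adjacent single edges, together with the five pairs of the form $(\{e_i\},\{e_{i+2},e_{i+3}\})$ (no two vertex-disjoint edge-pairs exist in $C_5$). The objective to maximise is $\alpha_{\{1,\ldots,5\}}+\sum_i\alpha_{\{1,\ldots,5\}\setminus\{i\}}$, the probability that at most one edge is closed, which is exactly the probability that $\mathbf{G}_\mu$ is connected.

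For the upper bound, in each of the three regimes I would exhibit a dual certificate: a non-negative linear combination of the constraints $\alpha_T\geqslant 0$ with the marginal and $1$-independence identities that, after simplification, yields the target inequality $\alpha_{\{1,\ldots,5\}}+\sum_i\alpha_{\{1,\ldots,5\}\setminus\{i\}}\leqslant F_{1,C_5}(p)$. Standard LP theory predicts that at the break points $p=\tfrac12$ and $p=\tfrac35$ the basis of the dual changes, producing three qualitatively distinct certificates. For the lower bound, I would read the optimal primal off the dual: each regime should yield an explicit mixture of a small family of deterministic configurations---the all-open configuration, the five Hamilton-path configurations (one edge closed), and a handful of less-connected configurations included to enforce the $1$-independence identities---attaining equality. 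With the correct mixture in hand, checking $1$-independence and computing the connectivity probability are routine verifications.

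The main obstacle is identifying the correct extremal mixtures and dual certificates in each of the three regimes, and in particular understanding the transitions at $p=\tfrac12$ and $p=\tfrac35$. For $p$ close to $1$ the extremal measure will concentrate mass on the all-open and Hamilton-path configurations, whereas for smaller $p$ substantial mass must be placed on poorly connected configurations in order to meet the non-adjacent-pair identity $\mu(e_i\text{ open},\,e_j\text{ open})=p^2$ while keeping the edge-marginal at $p$. The change of formula at each break point reflects a configuration entering or leaving the support of the extremal measure as $p$ varies; correctly identifying this combinatorial structure---i.e.\ the active constraints of the LP in each regime---is the combinatorial heart of the argument, while the remaining calculations are algebraic bookkeeping.
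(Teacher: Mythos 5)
Your plan matches the paper's approach: the paper's proof of this theorem simply invokes the linear-programming method of Section~\ref{section: cycles} (Section~\ref{subsection: linear programming}) and exhibits the three extremal measures, one per regime, that come out of it; your 32-variable LP with marginal and $1$-independence constraints, your count of the active pair constraints in $C_5$ (five non-adjacent-edge pairs plus five singleton-versus-path pairs), your remark that the break points at $p=\tfrac12$ and $p=\tfrac35$ reflect basis changes in the dual, and your plan to certify via primal/dual feasibility are exactly what the paper does. One small correction: Remark~\ref{remark: random sparsification} as stated only lets you lower edge-probabilities to exactly $p$, which is the right direction for $f_{1,G}$ but not for $F_{1,G}$; for the maximisation problem you instead need a random \emph{thickening} argument (independently open each currently closed edge $e$ with probability $(p-q_e)/(1-q_e)$, where $q_e$ is its current open-probability), which likewise preserves $1$-independence and can only increase connectivity. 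With that minor substitution your reduction to the exact-marginal LP is sound and the rest is the same routine the paper uses.
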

\noindent Together, Theorems~\ref{theorem: connected function Pn}--\ref{theorem: connected function C5} and~\ref{theorem: maximising connectivity in Kn}--\ref{theorem: maximising connectivity for C5}  determine the complete connectivity `profile' for $1$-independent measures $\mu$ on $K_n$, $P_n$, $C_4$ and $C_5$ --- that is, the range of values $\mu(\{\textrm{connected}\})$ can take if every edge is open with probability $p$. In Figure~\ref{figure: connectivity}, we illustrate these for four of these graphs $G$ with plots of $f_{1,G}(p)$, $F_{1, G}(p)$ and $f_{0, G}(p):=\mu\left(\mathbf{G}_p \textrm{ is connected}\right)$, where $\mathbf{G}_p$ is the $0$-independent model on $G$ obtained by setting each edge of $G$ to be open with probability exactly $p$, independently at random.

\begin{figure}
\begin{tikzpicture}
\begin{axis}[
title={Possible connectivity for $K_3$},
axis lines = left,
xlabel = $p$,
ylabel = {$f(p)$},
legend pos=north west,
width=0.45\textwidth,
height=0.45\textwidth,
]
\addplot [
domain=0:2/3, 
samples=100, 
color=red,
]
{3*x/2};
\addplot [
domain=2/3:1, 
samples=100, 
color=blue,
]
{1};
\addplot [
domain=1/3:1, 
samples=100, 
color=green,
]
{(3*x-1)/2};
\addplot [
domain=0:1/3, 
samples=100, 
color=green,
]
{0};
\addplot [
domain=0:1, 
samples=100, 
style=dashed,
]
{x^3+3*x^2*(1-x)};
\end{axis} 
\end{tikzpicture}
\begin{tikzpicture}
\begin{axis}[
title={Possible connectivity for $K_4$},
axis lines = left,
xlabel = $p$,
ylabel = {$f(p)$},
legend pos=north west,
width=0.45\textwidth,
height=0.45\textwidth,
]
\addplot [
domain=0:1-0.41421, 
samples=100, 
color=red,
]
{(4*x-x^2)/2};
\addplot [
domain=1-0.41421:1, 
samples=100, 
color=blue,
]
{1};
\addplot [
domain=0.41421:1, 
samples=100, 
color=green,
]
{(x^2+2*x-1)/2};
\addplot [
domain=0:1, 
samples=100, 
style=dashed,
]
{x^6+6*x^5*(1-x)+15*x^4*(1-x)^2+16*x^3*(1-x)^3};
\addplot [
domain=0:0.41421, 
samples=100, 
color=green,
]
{0};
\end{axis} 
\end{tikzpicture}
\begin{tikzpicture}
\begin{axis}[
title={Possible connectivity for $C_4$},
axis lines = left,
xlabel = $p$,
ylabel = {$f(p)$},
legend pos=north west,
width=0.45\textwidth,
height=0.45\textwidth,
]
\addplot [
domain=0:2/3, 
samples=100, 
color=red,
]
{2*x^2};
\addplot [
domain=2/3:1, 
samples=100, 
color=blue,
]
{2*x-x^2};
\addplot [
domain=1/2:1, 
samples=100, 
color=green,
]
{2*x-1};
\addplot [
domain=0:1, 
samples=100, 
style=dashed,
]
{x^4+4*x^3*(1-x)};
\addplot [
domain=0:1/2, 
samples=100, 
color=green,
]
{0};
\end{axis} 
\end{tikzpicture}
\begin{tikzpicture}
\begin{axis}[
title={Possible connectivity for $C_5$},
axis lines = left,
xlabel = $p$,
ylabel = {$f(p)$},
legend pos=north west,
width=0.45\textwidth,
height=0.45\textwidth,
]
\addplot [
domain=0:1/2, 
samples=100, 
color=red,
]
{5*x^2*(x+1)/(x+4)};
\addplot [
domain=1/2:3/5, 
samples=100, 
color=blue,
]
{5*x^2/3};
\addplot [
domain=3/5:1, 
samples=100, 
color=purple,
]
{x*(5*x^2-5*x-2)/(3*x-5)};
\addplot [
domain=0.57735:1, 
samples=100, 
color=green,
]
{x*(3*x^2-1)/(3*x-1)};
\addplot [
domain=0:1, 
samples=100, 
style=dashed,
]
{x^5+5*x^4*(1-x)};
\addplot [
domain=0:0.57735, 
samples=100, 
color=green,
]
{0};
\end{axis} 
\end{tikzpicture}
\caption{The $1$-independent connectivity profile of $G$ for $G=K_3$ $K_4$, $C_4$ and $C_5$. The green curve represents $f_{1,G}(p)$, the dashed black curve $f_{0, G}(p)$, and the union of the red, blue and purple segments represent the piecewise smooth function $F_{1,G}(p)$.}\label{figure: connectivity}
\end{figure}

\subsection{Organisation of the paper}
Our first set of results, Theorems~\ref{theorem: local construction integer lattice} and~\ref{theorem: global construction integer lattice} are proved in Section~\ref{section: lower bound on crit prob for percolation in Z^2}.

In Section~\ref{section: general upper bound on crossing prob}, we use arguments reminiscent of those used in inductive proofs of the Lov\'asz local lemma to obtain Theorem~\ref{theorem: crossing G times Pn}, which gives a general upper bound for crossing and long paths critical probabilities in $1$-independent percolation models on Cartesian products  $\mathbb{Z}\times G$. This result is used in Sections~\ref{section: line lattice} and~\ref{section: ladder} to prove Theorem~\ref{theorem: long paths critical prob} on the long paths critical probability for the line and ladder lattices.

In Sections~\ref{section: line lattice}, \ref{section: complete graphs} and~\ref{section: cycles} and~\ref{section: maximising connectivity}, we prove our results on $f_{1,G}(p)$ and $F_{1,G}(p)$
when $G$ is a path, a complete graph or a short cycle. 
We apply these results in Section~\ref{section: long paths} to prove Theorem~\ref{theorem: upper bounds on long paths critical prob}. Finally we end the paper in Section~\ref{section: open problems} with a discussion of the many open problems arising from our work.

\subsection{Notation}\label{subsection: notation}

We write $\mathbb{N}$ for the set of natural numbers $\{1,2,\ldots\}$, $\mathbb{N}_{0}$ for the set $\mathbb{N} \cup \{0\}$, and $\mathbb{N}_{\geqslant k}$ for the set of natural numbers greater than or equal to $k$.

We set $[n]:=\{1,2, \ldots n\}$. Given a set $A$, we write $A^{(r)}$ for the collection of all subsets of $A$ of size $r$, hereafter referred to as $r$-sets from $A$. We use standard graph theoretic notation. A graph is a pair $G=(V,E)$ where $V=V(G)$ and $E=E(G)\subseteq V(G)^{(2)}$ denote the vertex set and edge set of $G$ respectively. Given a subset $A\subseteq G$, we denote by $G[A]$ the subgraph of $G$ induced by $A$.  We also write $N(A)$ for the set of vertices in $G$ adjacent to at least one vertex in $A$.

Given two graphs $G$ and $H$, we write $G \times H$ for the Cartesian product of $G$ with $H$, which is the graph on the vertex set $V(G) \times V(H)$ having an edge between $(x,u)$ and $(y,v)$ if and only if either $u=v$ and $x$ is adjacent to $y$ in $G$, or $x=y$ and $u$ is adjacent to $v$ in $H$.

Throughout this paper, we shall use \emph{$k$-ipm} as a shorthand for `$k$-independent percolation model/measure'. In a slight abuse of language, we say that a bond percolation model $\mu$ on an infinite connected graph $G$ percolates if $\mu(\{\textrm{percolation}\})=1$. We refer to a random configuration $\mathbf{G}_{\mu}$ as a $\mu$-random subgraph of $G$. Finally we write $\mathbb{E}_{\mu}$ for the expectation taken with respect to the probability measure $\mu$.  For any event $X$, we write $X^{c}$ for the complement event.

\section{Lower bounds on $p_{1,c}(\mathbb{Z}^d)$}\label{section: lower bound on crit prob for percolation in Z^2}

\begin{proof}[Proof of Theorem \ref{theorem: local construction integer lattice}]
Let $d\in \mathbb{N}_{\geqslant 2}$. For $k \in \mathbb{N}_{0}$, let $T_{k} := \big\{(x,y) \in \mathbb{Z}^{d}: \max(|x|,|y|) = k\big\}$.  Let $q := \sqrt{3} - 1$.  For each vertex in $\mathbb{Z}^{d}$, we colour it either Blue or Red, or set it to state $I$, which stands for \textit{Inwards}.  The probability that a given vertex will be in each of these states will depend on which of the $T_{k}$ the vertex is in, and we assign these states to each vertex independently of all other vertices.
\begin{itemize}
	\item If $v$ is a vertex in $T_{k}$, where $k \equiv 0 \!\mod 6$, then we colour $v$ Blue. 
	\item If $v$ is a vertex in $T_{k}$, where $k \equiv 1 \!\mod 6$, then we colour $v$ Red with probability $q/2$ and colour it Blue otherwise.
	\item If $v$ is a vertex in $T_{k}$, where $k \equiv 2 \!\mod 6$, then we colour $v$ Red with probability $q$ and put it in the Inwards state $I$ otherwise.
	\item If $v$ is a vertex in $T_{k}$, where $k \equiv 3 \!\mod 6$, then we colour $v$ Red. 
	\item If $v$ is a vertex in $T_{k}$, where $k \equiv 4 \!\mod 6$, then we colour $v$ Blue with probability $q/2$ and colour it Red otherwise.
	\item If $v$ is a vertex in $T_{k}$, where $k \equiv 5 \!\mod 6$, then we colour $v$ Blue with probability $q$ and put it in the Inwards state $I$ otherwise.
\end{itemize}
Note that the rules for $T_{k+3}, T_{k+4}, T_{k+5}$ are the same as those for $T_{k}, T_{k+1}, T_{k+2}$ respectively, except with red and blue interchanged. See Figure \ref{figure: vertex states} for the possible states of the vertices in $T_{0},T_{1},T_{2}$ and $T_{3}$ when $d=2$. 
\begin{figure}
\begin{center}
$\begin{array}{ccccccc}
{\color{red}R} & {\color{red}R}  & {\color{red}R}  & {\color{red}R}  & {\color{red}R}  & {\color{red}R}  & {\color{red}R}\\
{\color{red}R} & {\color{red}R} I & {\color{red}R} I & {\color{red}R} I & {\color{red}R} I & {\color{red}R} I & {\color{red}R}\\
{\color{red}R} & {\color{red}R} I & {\color{blue}B} {\color{red}R} & {\color{blue}B} {\color{red}R} & {\color{blue}B} {\color{red}R} & {\color{red}R} I & {\color{red}R}\\
{\color{red}R} & {\color{red}R} I &  {\color{blue}B} {\color{red}R} & {\color{blue}B} & {\color{blue}B} {\color{red}R} & {\color{red}R} I & {\color{red}R}\\
{\color{red}R} & {\color{red}R} I & {\color{blue}B} {\color{red}R} & {\color{blue}B} {\color{red}R} & {\color{blue}B} {\color{red}R} & {\color{red}R} I & {\color{red}R}\\
{\color{red}R} & {\color{red}R} I & {\color{red}R} I & {\color{red}R} I & {\color{red}R} I & {\color{red}R} I & {\color{red}R}\\
{\color{red}R} & {\color{red}R}  & {\color{red}R}  & {\color{red}R}  & {\color{red}R}  & {\color{red}R}  & {\color{red}R}\\
\end{array}$
\caption{The possible states of the vertices in $T_{0},T_{1},T_{2}$ and $T_{3}$ when $d=2$.  The letter $B$ stands for Blue, the letter $R$ stands for Red, and the letter $I$ stands for the Inwards state.}\label{figure: vertex states}
\end{center}
\end{figure}
Now suppose that $e = \{v_{1},v_{2}\}$ is an edge in $\mathbb{Z}^{d}$.  Firstly we say that the edge $e$ is open if either both $v_{1}$ and $v_{2}$ are Blue or both $v_{1}$ and $v_{2}$ are Red.  We also say the edge $e$ is open if, for some $k$, we have that $v_{1} \in T_{k}$, $v_{2} \in T_{k+1}$, and $v_{2}$ is in state $I$.  In all other cases we say that the edge $e$ is closed.  It is clear that this gives a $1$-independent measure on $\mathbb{Z}^{d}$ as it is vertex-based, and it is also easy to check that every edge is present with probability at least $4 - 2 \sqrt{3}$.

Call this measure $\mu$, and let $G:=\mathbb{Z}^d$.
We claim that in $\mathbf{G}_{\mu}$, for all $k \equiv 0 \!\mod 3$, there is no path of open edges from $T_{k}$ to $T_{k+3}$.  Suppose this is not the case, and $P$ is some path of open edges from a vertex in $T_{k}$ to $T_{k+3}$.  We first note that $P$ cannot include a vertex in state $I$, as such a vertex would be in $T_{k+2}$ and would only be adjacent to a single edge.  Thus every vertex of $P$ is either Blue or Red.  However, as one end vertex of $P$ is Blue and the other end vertex is Red, and there are no open edges with different coloured end vertices, we have that such a path $P$ cannot exist.  As a result, every component of $\mathbf{G}_{\mu}$ is sandwiched between some $T_{k-3}$ and $T_{k+3}$, where $k \equiv 0 \!\mod 3$, and so is of finite size.  Thus  we have that $p_{1,c}(\mathbb{Z}^{d}) \geqslant  4 - 2 \sqrt{3}$.
\end{proof}

The construction in Theorem \ref{theorem: local construction integer lattice} can in fact be generalised to certain other graphs and lattices.  Given an infinite, connected, locally finite graph $G$, and a vertex set $A \subseteq V(G)$, let $\overline{A}$ be the closure of $A$ under $2$-neighbour bootstrap percolation on $G$.  That is, let $\overline{A} := \bigcup_{i \geqslant 0} A_{i}$, where $A_{0} := A$ and for $i \geqslant 1$
\begin{equation}
A_{i} := A_{i-1} \cup \{v \in V(G): v \text{ has } 2 \text{ or more neighbours in } A_{i-1}\}. \nonumber
\end{equation}
We say that $G$ has the \textit{finite $2$-percolation property} if, for every finite set $A \subseteq V(G)$, we have that $\overline{A}$ is finite.  
\begin{corollary}\label{corollary: if 2-percolation property, then lb on p_{1,c}}
If $G$ has the finite $2$-percolation property, then $p_{1,c}(G) \geqslant 4 - 2\sqrt{3}$.	
\end{corollary}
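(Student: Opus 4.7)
The plan is to adapt the construction from Theorem~\ref{theorem: local construction integer lattice} to a general host graph $G$ satisfying the finite $2$-percolation property, by building a nested family of ``shells'' in $G$ taking the role of the $L^\infty$-spheres $T_k$ of $\mathbb{Z}^d$. Two structural features of the $T_k$ drive the original argument: (i) every edge of $\mathbb{Z}^d$ lies either within a single shell or between two consecutive shells, and (ii) every vertex of a shell $T_k$ has at most one neighbour in the previous shell $T_{k-1}$. The key observation is that the finite $2$-percolation property lets us produce shells in $G$ enjoying both (i) and (ii) simultaneously.

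Fix $v_0 \in V(G)$ and define a sequence of finite sets iteratively by $A_0 := \{v_0\}$ and $A_{k+1} := \overline{A_k \cup N(A_k)}$. Local finiteness of $G$ makes $A_k \cup N(A_k)$ finite whenever $A_k$ is, so the finite $2$-percolation property gives inductively that each $A_k$ is finite. Each $A_k$ is $2$-closed by construction, so any vertex outside $A_k$ has at most one neighbour in $A_k$; and $A_{k+1}$ contains $N(A_k)$, so no vertex of $A_k$ has a neighbour outside $A_{k+1}$. Connectedness of $G$ together with the fact that each finite $A_k$ is a proper subset of the infinite set $V(G)$ force $A_0 \subsetneq A_1 \subsetneq \ldots$ with $\bigcup_{k \geqslant 0} A_k = V(G)$. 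Setting $T_k := A_k \setminus A_{k-1}$ (with $A_{-1} := \emptyset$), properties (i) and (ii) for this shell decomposition follow from the two bullet-pointed facts above: an edge from $T_k$ to $T_{k'}$ with $k' \geqslant k+2$ would give a vertex of $A_k$ with a neighbour outside $A_{k+1}$, and a vertex of $T_{k+1}$ has at most one neighbour in $A_k \supseteq T_k$ by $2$-closedness of $A_k$.

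I would then apply the colouring scheme and edge rules of Theorem~\ref{theorem: local construction integer lattice} verbatim to the $T_k$ (colours, state $I$, and the parameter $q = \sqrt{3}-1$ assigned by the rules in terms of $k \bmod 6$). This yields a vertex-based, hence $1$-independent, measure on $G$. By property (i) every edge of $G$ falls into one of the shell-pair cases considered in the edge-probability calculation of Theorem~\ref{theorem: local construction integer lattice}, and the same calculation shows each edge is open with probability at least $4 - 2\sqrt{3}$. By property (ii) any vertex in state $I$ has at most one open incident edge, so it cannot serve as an internal vertex on an open path. The blocking argument then goes through verbatim: an open path from $T_k$ to $T_{k+3}$ with $k \equiv 0 \bmod 3$ would have to be monochromatic, yet its endpoints lie in shells of opposite colour, a contradiction. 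Every open component is therefore contained in a union of six consecutive shells and is finite, so the measure does not percolate, which yields $p_{1,c}(G) \geqslant 4 - 2\sqrt{3}$.

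The main obstacle is structural: in a general host graph, BFS-shells satisfy (i) but not (ii), while iterating $2$-closure alone gives (ii) but not (i), and losing (i) is fatal because a shell-jumping edge (for instance between $T_0$ and $T_2$) would be open with probability $0$ and destroy the edge-probability lower bound. Interleaving a BFS-step (adjoining $N(A_k)$) with a $2$-closure step delivers both properties simultaneously, and this is precisely where the finite $2$-percolation property is needed: it guarantees that the $A_k$ remain finite and that the construction produces genuine thin shells, rather than swallowing arbitrarily large portions of $V(G)$ at a single step.
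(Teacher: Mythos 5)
Your proof is correct and follows essentially the same route as the paper: extract a nested family of finite $2$-closed sets $A_k$ starting from a single vertex, take the shells $T_k := A_k \setminus A_{k-1}$, verify that every edge stays within consecutive shells and that each vertex has at most one neighbour in the previous shell, and then run the colouring construction from Theorem~\ref{theorem: local construction integer lattice} unchanged. The only (immaterial) difference is that the paper defines the shells directly by $T_k := \overline{N(T_{k-1})} \setminus \bigcup_{j<k} T_j$, while you iterate $A_{k+1} := \overline{A_k \cup N(A_k)}$; both produce the same two crucial shell properties, and you are somewhat more explicit than the paper in verifying them.
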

\begin{proof}
Partition $V(G)$ in the following way: pick any vertex $v$ and set $T_{0} := \{v\}$.  For $k \geqslant 1$ let
\begin{equation}
T_{k} := \overline{N(T_{k-1})} \setminus \bigcup_{j = 0}^{k-1} T_{j}\nonumber.
\end{equation}
We have that if $w \in T_{k}$, then $w$ is only adjacent to vertices in $T_{k-1},T_{k}$ and $T_{k+1}$.  Moreover, $w$ is adjacent to at most one vertex in $T_{k-1}$ --- this is the crucial property needed for our construction.  Since $G$ has the finite $2$-percolation property, each $T_{k}$ is finite.  Thus we can use the $T_{k}$ to construct a non-percolating $1$-ipm on $G$ in the exact same fashion as done for $\mathbb{Z}^{d}$ in Theorem \ref{theorem: local construction integer lattice} (the key being that vertices in state $I$ are still dead ends, being incident to a unique edge), which in turn shows that $p_{1,c}(G) \geqslant 4 - 2\sqrt{3}$.  
\end{proof}
An example of a lattice with the finite $2$-percolation property is the lattice $(3,4,6,4)$, where here we are using the lattice notation of Gr\"{u}nbaum and Shephard \cite{GrunbaumShephard86}.  Riordan and Walters \cite{RiordanWalters07} showed that the site percolation threshold of this lattice is very likely to lie in the interval $[0.6216,0.6221]$.  Thus this estimate, together with Newman's construction (see equation (\ref{eq: site percolation bounds on p_{1,c}})), shows (non-rigorously) that $p_{1,c}\left(\left(3,4,6,4\right)\right) \geqslant 0.52981682$.  As this is less than $4-2\sqrt{3}$, we have that our construction gives the (rigorous) improvement of $p_{1,c}\left(\left(3,4,6,4\right)\right) \geqslant 4-2\sqrt{3}$.

\begin{proof}[Proof of Theorem \ref{theorem: global construction integer lattice}]

Fix $\varepsilon >  0$ sufficiently small so that $q := \theta_{\text{site}}(\mathbb{Z}^{2}) - \varepsilon$ is strictly larger than $1/4$.  For each vertex $v \in \mathbb{Z}^{2}$, we assign to it one of three states: On, $L$ or $D$, and we do this independently for every vertex.  We assign $v$ to the On state with probability $q$, we assign it to the $L$ state with probability $\frac{1}{2}(1-q)$, and else we assign it to the $D$ state with probability $\frac{1}{2}(1-q)$.

We now describe which edges are open based on the states of the vertices.  We first say that the edge $e$ is open if both of its vertices are in the On state.  If a vertex is in state $L$, then the edge adjacent and to the left of it is open.  Similarly, if a vertex is in state $D$, then the edge adjacent and down from it is open.  All other edges are closed.  See Figure \ref{figure: global construction} for an example of this construction.

\begin{figure}[ht]
	\centering
\includegraphics[scale=1]{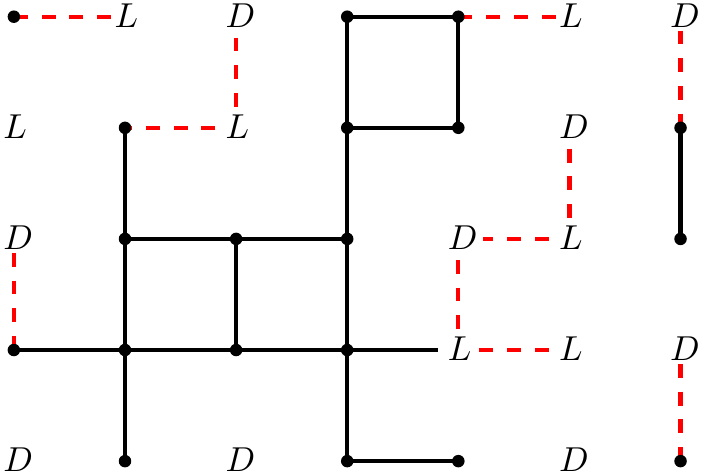}
	\caption{This figure shows the open edges of the construction on a small subset of $\mathbb{Z}^2$.  The unlabelled vertices correspond to those in the On state.  The black edges are the open edges that are adjacent to two On vertices, while the dashed red edges are the open edges that are either to the left of an $L$ vertex or below a $D$ vertex.}
	\label{figure: global construction}
\end{figure}	

It is easy to see that this is a $1$-independent measure on $\mathbb{Z}^{2}$ as it is vertex-based, and every edge is present with probability $q^{2} + \frac{1}{2}(1-q)$. Call this measure $\mu$ and let $G:=\mathbb{Z}^2$. We will show that every component of $\mathbf{G}_\mu$ has finite size.  We begin by first proving an auxiliary lemma.  Let $t \in [0,\frac{1}{2}]$, and let us define another $1$-independent measure on $\mathbb{Z}^{2}$, which we call the \textit{left-down} measure with parameter $t$.  In the left-down measure, each vertex of $\mathbb{Z}^{2}$ is assigned to one of three states: Off, $L$ or $D$, and we do this independently for every vertex.  For each vertex $v \in \mathbb{Z}^{2}$, we assign it to state $L$ with probability $t$, we assign it to state $D$ with probability $t$, and we assign it to state Off with probability $1-2t$.  As above, if a vertex is in state $L$, then the edge adjacent and to the left of it is open, while if a vertex is in state $D$, then the edge adjacent and down from it is open.  All other edges are closed. We use $\nu_t$ to denote the left-down measure with parameter $t$.

\begin{lemma}\label{lemma: left-down measure}
If $ 0 \leqslant t \leqslant \frac{3}{8}$, then all components in $\mathbf{G}_{\nu_t}$ are finite almost surely.
\end{lemma}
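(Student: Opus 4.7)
The plan is to exploit the one-sided nature of $\nu_t$ by orienting each open edge towards its ``causer'' vertex: an open horizontal edge $\{(x,y),(x+1,y)\}$ must come from $(x+1,y)$ being in state $L$, so orient it from $(x+1,y)$ to $(x,y)$; similarly orient an open vertical edge $\{(x,y),(x,y+1)\}$ downwards from $(x,y+1)$, which must be in state $D$. Since each vertex occupies exactly one of the three states, in this orientation every vertex has out-degree at most $1$ and every arrow moves strictly leftward or strictly downward.

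First I would show $\mathbf{G}_{\nu_t}$ is a forest. Suppose a cycle $C=v_0v_1\ldots v_{k-1}v_0$ existed. Each of its $k$ edges has a causer among $V(C)$, and since each vertex has out-degree at most $1$, the causer map is a bijection from $E(C)$ to $V(C)$, so every vertex of $C$ has its outgoing arrow pointing to a neighbour in $C$. Following these arrows around $C$ gives a closed walk of length $k$ in which every step is strictly left or strictly down, which is absurd. The same counting on any resulting tree $T$ shows that the bijection between edges and causers omits exactly one vertex, which must therefore be in state Off; I call this vertex the \emph{root} $r(T)$ of $T$, so each component contains a unique Off vertex as its root.

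Next I would compute the expected tree size. For a fixed vertex $r$, conditional on $r$ being Off, the vertex $r+(a,b)$ with $a,b\geqslant 0$ lies in $T_r$ if and only if its arrow-path reaches $r$, which happens along precisely one of the $\binom{a+b}{a}$ monotone lattice paths from $r+(a,b)$ to $r$. Any two such paths must first diverge at some common vertex $u$, on which they would force the incompatible states $L$ and $D$, so these events are pairwise disjoint. Conditional on $r$ being Off, each individual such event has probability $t^{a+b}$, since each of the $a+b$ non-root vertices on the path must independently lie in the unique state prescribed by its next step. Summing by linearity,
\[
\mathbb{E}\bigl[|T_r|\ \bigm|\ r \text{ is Off}\bigr] \;=\; \sum_{a,b\geqslant 0}\binom{a+b}{a}\, t^{a+b} \;=\; \sum_{n\geqslant 0}(2t)^n \;=\; \frac{1}{1-2t},
\]
which is finite for $t<\tfrac12$.

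Finally, for $t\leqslant\tfrac{3}{8}<\tfrac12$, this conditional expectation is finite, so $|T_r|$ is almost surely finite conditional on $r$ being Off. Similarly, the arrow-path from any fixed vertex $v$ has length at least $n$ with probability $(2t)^n\to 0$ (since the $n$ successive vertices on that path are distinct and their states are independent), so it almost surely terminates at some Off root $r(v)$. Taking a countable union over $r,v\in\mathbb{Z}^2$ removes a single null set, after which every vertex $v$ lies in the finite tree $T_{r(v)}$. The one place careful argument is needed is the combinatorial verification of disjointness and of the state-counting in the expected tree size computation; once these are in place the bound $t<\tfrac12$ obtained is already more than enough to cover the range $t\leqslant\tfrac{3}{8}$ claimed in the lemma.
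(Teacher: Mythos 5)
The proposal is correct but follows a genuinely different route from the paper's. The paper's proof couples $\nu_t$ with the $0$-independent bond percolation measure $\xi$ on $\mathbb{Z}^2$ at edge-probability $z = 1 - \sqrt{1-2t}$, so that under the coupling $\mathbf{G}_{\nu_t} \subseteq \mathbf{G}_{\xi}$, and then invokes the Harris--Kesten theorem, observing that $t \leqslant 3/8$ forces $z \leqslant 1/2$ and hence $\xi$ has no infinite cluster. Your argument bypasses Harris--Kesten entirely: you orient each open edge toward its causer, note that every vertex has out-degree at most one so that $\mathbf{G}_{\nu_t}$ is a forest whose arrows decrease the $\ell_1$-coordinate, and then bound the conditional expected size of the tree rooted at an Off vertex directly by summing $\binom{a+b}{a}t^{a+b}$ over monotone lattice paths, obtaining $1/(1-2t) < \infty$. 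This is essentially a backward-exploration/branching computation, and it yields the sharper range $t < 1/2$; indeed, immediately after this lemma the paper remarks that all clusters are almost surely finite for $0 \leqslant t < 1/2$ ``by considering an appropriate branching process'' but omits the proof, and your writeup supplies exactly that argument. The trade-off is that the paper's coupling is shorter and delegates the work to a known theorem, whereas your version requires a little more combinatorial bookkeeping (disjointness of the $\binom{a+b}{a}$ monotone-path events, the identification of each component with a set $T_r$, and the almost-sure termination of arrow-paths), but in return is self-contained and sharp at the true threshold $t = 1/2$.
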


\begin{proof}
Let $z := 1 - \sqrt{1-2t}$.  As $0 \leqslant t \leqslant \frac{3}{8}$ we have that $0 \leqslant z \leqslant \frac{1}{2}$.  We start by taking a random subgraph of $\mathbb{Z}^{2}$ where every edge is open with probability $z$, independently of all other edges.  We then further modify it as follows.  For each vertex $v = (x,y)$ we look at the state of the edge $e_{1}$ from $v$ to the vertex $(x-1,y)$, and the state of the edge $e_{2}$ from $v$ to the vertex $(x,y-1)$.  If at least one of $e_{1}$ or $e_{2}$ is closed, we do not change anything.  However, if both $e_{1}$ and $e_{2}$ are open, with probability $\frac{1}{2}$ we close the edge $e_{1}$, and otherwise we close the edge $e_{2}$.  We do this independently for every vertex $v$ of $\mathbb{Z}^{2}$.

It is easy to see that this is an equivalent formulation of $\nu_t$, the left-down measure with parameter $t$. Indeed, to each vertex $v=(x,y)$ as above we may assign a state Off if both the edge $e_1(v)$ to the vertex to the left of $v$ and the edge $e_2(v)$ to the vertex below $v$ are closed, a  state $L$ if $e_1(v)$ is open and  a state $D$ if $e_2(v)$ is open.  The probabilities of these three states are $(1-z)^2=1-2t$, $t$ and $t$ respectively, and since the vertex states depend only on the pairwise disjoint edge sets $\{e_1(v), e_2(v)\}_{v\in \mathbb{Z}^2}$, they are independent of one another just as in the $\nu_t$ measure.

  Thus we have coupled $\nu_t$ to the $0$-independent bond percolation measure $\xi$ on $\mathbb{Z}^{2}$ with edge-probability $z$.  In this coupling we have that if an edge $e$ is open in $\mathbf{G}_{\nu_t}$, then it is also open in $\mathbf{G}_{\xi}$.  As $z \leqslant 0.5$ we have that all components in $\mathbf{G}_{\xi}$ are finite by the Harris--Kesten theorem, and so we also have that all of the components in $\mathbf{G}_{\nu_t}$ are finite too.
\end{proof}

By considering an appropriate branching process it is possible to prove the stronger result that if $ 0 \leqslant t < \frac{1}{2}$, then almost surely all components in $\mathbf{G}_{\nu_t}$ are finite.  We make no use of this stronger result in this paper, so we omit its proof.  It is also clear that when $t = \frac{1}{2}$, every vertex in $\mathbf{G}_{\nu_t}$  is part of an infinite path consisting solely of steps to the left or steps downwards, and so percolation occurs in $\mathbf{G}_{\nu_t}$ at this point.

Let us return to our original $1$-independent measure $\mu$, where every vertex is in state On, $L$ or $D$.  Recall that our aim is to show that all components have finite size in $\mathbf{G}_\mu$.  Consider removing all vertices in state $L$ or $D$, and also any edges adjacent to these vertices.  What is left will be a collection of components consisting only of edges between vertices in the On state, which we call the \textit{On-sections}.  The black edges in Figure \ref{figure: global construction} are the edges in the On-sections.  As a vertex is On with probability $q < \theta_{\text{site}}(\mathbb{Z}^{2})$, we have that almost surely every On-section is finite.  Similarly, consider removing all edges in the On-sections.  What is left will be a collection of edges adjacent to vertices in the $L$ or $D$ states.  We call these components the $LD$-sections; the dashed red edges in Figure \ref{figure: global construction} are the edges in the $LD$-sections.  As each vertex is in state $L$ with probability $\frac{1}{2}(1-q) \leqslant \frac{3}{8}$ and in state $D$ with the same probability, Lemma \ref{lemma: left-down measure} tells us that almost surely every $LD$-section is finite.

For each vertex $v$ in state $L$ orient the open edge to the left of it away from $v$, while for each vertex $v$ in state $D$ orient the open edge below it away from $v$.  This gives a partial orientation of the open edges of $\mathbf{G}_\mu$, in which every vertex in state $L$ or $D$ has exactly one edge oriented away from it, and vertices in state On have no outgoing edge. Furthermore, if $v_{1}$ is a vertex in the On state and $v_{2}$ is a vertex in the $L$ or $D$ state, then the edge between them is oriented from $v_{2}$ to $v_{1}$.  Since the $LD$-sections are almost surely finite, this implies the $LD$ sections under this orientation consist of directed trees, each of which is oriented from the leaves to a unique root, which is in the On state. In particular, every $LD$-section attaches to at most one On-section.  As such, almost surely every component in $\mathbf{G}_\mu$ consists of at most one On-section, and a finite number of finite $LD$-sections attached to it.  Thus almost surely every component in $\mathbf{G}_\mu$ is finite.
\end{proof}


\section{A general upper bound for $p_{1, \ell p}\left(\mathbb{Z}\times G\right)$}\label{section: general upper bound on crossing prob}

Let $G$ be a finite connected graph. Set $v(G):=\vert V(G)\vert$. Recall that for any $1$-independent bond percolation measure $\mu \in \mathcal{M}_{1, \geqslant p}(G)$, we have $\mu(\mathbf{G}_{\mu} \textrm{ is connected})\geqslant f_{1,G}(p)$. 

\begin{theorem}\label{theorem: crossing G times Pn}
	If $p$ satisfies 
	\begin{align}\label{equation: bounding equation} 
	\left(f_{1,G}(p)\right)^2\geqslant \frac{1}{\alpha(1-\alpha)} (1-p)^{v(G)}, 
	\end{align}
	for some $\alpha\in(0,1/2]$, then for every $\ell\in \mathbb{N}$
	\[f_{1, P_{\ell}\times G}(p)\geqslant \left((1-\alpha)f_{1,G}(p)\right)^{\ell} .\]	
\end{theorem}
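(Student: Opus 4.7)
The plan is to prove the theorem by an induction reminiscent of the inductive proof of the Lov\'asz Local Lemma. For a $1$-ipm $\mu \in \mathcal{M}_{1, \geqslant p}(P_\ell \times G)$, let $A_i$ denote the event that the $i$-th copy $\{i\} \times G$ is connected in $\mathbf{G}_\mu$ (for $1 \leqslant i \leqslant \ell$), and let $B_i$ denote the event that at least one of the $v(G)$ edges between the $i$-th and $(i+1)$-th copies is open (for $1 \leqslant i \leqslant \ell - 1$). The intersection of all these events visibly forces a connected spanning subgraph of $P_\ell \times G$ (take a spanning tree in each copy together with one open inter-copy edge at each interface), so it suffices to bound $\mu\bigl(\bigcap_{i=1}^\ell A_i \cap \bigcap_{i=1}^{\ell-1} B_i\bigr)$ from below. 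Two easy bounds provide the input ingredients: first, the marginal of $\mu$ on the edges of a single copy of $G$ is a $1$-ipm on $G$ with edge-probability at least $p$, so $\mu(A_i) \geqslant f_{1, G}(p)$; second, the $v(G)$ edges between consecutive copies are pairwise vertex-disjoint, so $1$-independence makes their states mutually independent, giving $\mu(B_i^c) \leqslant (1-p)^{v(G)}$.

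Set $X_1 := A_1$ and $X_i := A_i \cap B_{i-1}$ for $2 \leqslant i \leqslant \ell$, so that $\bigcap_i X_i$ equals the intersection of interest. The heart of the argument is the inductive claim
\[
\mu\bigl(X_i \mid X_1 \cap \cdots \cap X_{i-1}\bigr) \geqslant (1-\alpha) f_{1, G}(p) \qquad (i \geqslant 2),
\]
with the starting bound $\mu(X_1) \geqslant f_{1, G}(p)$. To prove the claim I would write $\mu(X_i \mid \cdot) \geqslant \mu(A_i \mid \cdot) - \mu(B_{i-1}^c \mid \cdot)$ and handle the two terms separately. The edges underlying $A_i$ lie entirely in copy $i$, which is vertex-disjoint from every edge involved in $X_1, \ldots, X_{i-1}$; hence $1$-independence yields $\mu(A_i \mid X_1 \cap \cdots \cap X_{i-1}) = \mu(A_i) \geqslant f_{1, G}(p)$.

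The main technical step is bounding $\mu(B_{i-1}^c \mid X_1 \cap \cdots \cap X_{i-1})$. The crucial asymmetry is that while $B_{i-1}$ shares vertices with $X_{i-1}$, it is vertex-disjoint from each of $X_1, \ldots, X_{i-2}$, so by $1$-independence
\[
\mu(B_{i-1}^c \cap X_1 \cap \cdots \cap X_{i-1}) \leqslant \mu(B_{i-1}^c \cap X_1 \cap \cdots \cap X_{i-2}) = \mu(B_{i-1}^c)\,\mu(X_1 \cap \cdots \cap X_{i-2}).
\]
Dividing by $\mu(X_1 \cap \cdots \cap X_{i-1})$ and applying the inductive lower bound to the factor $\mu(X_{i-1} \mid X_1 \cap \cdots \cap X_{i-2})$ that emerges in the denominator gives
\[
\mu(B_{i-1}^c \mid X_1 \cap \cdots \cap X_{i-1}) \leqslant \frac{(1-p)^{v(G)}}{(1-\alpha) f_{1, G}(p)} \leqslant \alpha f_{1, G}(p),
\]
where the second inequality is exactly the hypothesis~(\ref{equation: bounding equation}). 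Combining the two estimates closes the induction. A chain-rule expansion then delivers
\[
\mu\Biggl(\bigcap_{i=1}^{\ell} X_i\Biggr) \geqslant f_{1, G}(p) \bigl((1-\alpha) f_{1, G}(p)\bigr)^{\ell-1} \geqslant \bigl((1-\alpha) f_{1, G}(p)\bigr)^{\ell},
\]
as required. The main obstacle is precisely the one the LLL machinery is designed to sidestep: the events $X_i$ are $1$-dependent along the line, so one cannot multiply probabilities naively. The single clever move is to observe that, although the ``bad'' event $B_{i-1}^c$ is entangled with its immediate neighbour $X_{i-1}$, it is independent of all the earlier $X_j$'s, which is what makes the telescoping go through.
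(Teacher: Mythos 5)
Your proof is correct, and it takes a genuinely cleaner route than the paper's, while resting on the same underlying ideas (LLL-style conditioning along $P_\ell$, $1$-independence for vertex-disjoint edge sets, and the hypothesis~(\ref{equation: bounding equation})).

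The paper's proof conditions on the cumulative events $Y_n :=\{[n]\times V(G)\text{ connected}\}$ and maintains a \emph{pair} of coupled recursions for $x_n := \mu(X_n^c \mid \bigcap_{m<n} Y_m)$ and $y_n := \mu(Y_n^c \mid X_n \cap \bigcap_{m<n} Y_m)$, where its $X_n = Y_{n-1}\cap H_n$ is itself a cumulative event. You instead work with purely local events $X_i := A_i \cap B_{i-1}$ (copy $i$ internally connected, and an open edge entering it) and run a \emph{single} one-step recursion on $\mu(X_i \mid X_1 \cap \cdots \cap X_{i-1})$. The key observation in both arguments is the same asymmetry: the troublesome event ($H_n^c$ for the paper, $B_{i-1}^c$ for you) is vertex-entangled with its immediate predecessor but is $1$-independent of everything further back, so one can strip off exactly one factor in the denominator and apply the inductive bound to it. Your version replaces the connectivity event $Y_\ell$ by the strictly smaller event $\bigcap_i X_i$; this costs nothing numerically and substantially simplifies the bookkeeping, since you avoid the paper's inclusion lemmas (a)--(c) and the coupled $x_n,y_n$ system. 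One minor point worth making explicit in a write-up: the conditional probabilities used are well-defined because, assuming $f_{1,G}(p)>0$ (otherwise the conclusion is trivial), the induction shows $\mu(X_1\cap\cdots\cap X_{i})\geqslant f_{1,G}(p)\bigl((1-\alpha)f_{1,G}(p)\bigr)^{i-1}>0$ at every stage, so no division by zero occurs.
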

\begin{proof}
	Consider an arbitrary measure $\mu\in \mathcal{M}_{1,\geqslant p}(\mathbb{Z}\times G)$. For any $n\in \mathbb{N}$, the restriction of $\mu$ to $[n]\times V(G)$ is a measure from $\mathcal{M}_{1,\geqslant p}(P_n\times G)$, and clearly all such measures can be obtained in this way. Furthermore, for every $n\in \mathbb{N}$, the restriction of $\mu$ to $\{n\}\times V(G)$ is a measure from $\mathcal{M}_{1,\geqslant p}(G)$, and in particular the subgraph of $\mathbf{(\mathbb{Z}\times G)}_{\mu}$ induced by $\{n\}\times V(G)$ is connected with probability at least $f_{1, G}(p)$.

	We consider the $\mu$-random graph $\mathbf{(\mathbb{Z}\times G)}_{\mu}$. For $n\geqslant 1$ let $Y_n$ be the event that $[n]\times V(G)$ induces a connected subgraph. For $n\geqslant 2$, let $X_n$ be the event that $[n-1]\times V(G)$ induces a connected subgraph and at least one vertex in $\{n\}\times V(G)$ is connected to a vertex in $\{n-1\}\times V(G)$. For $n=1$, set $X_1$ to be the trivially satisfied event occurring with probability $1$. 
	For $n\geqslant 1$, let $V_n$ be the event that $\{n\}\times V(G)$ induces a connected subgraph, and for $n\geqslant 2$ let $H_n$ be the event that at least one of the edges from $\{n-1\}\times V(G)$ to $\{n\}\times V(G)$ is present. 
	
	It easily follows that $X_n = Y_{n-1} \cap H_n$ and $X_n \cap V_n \subseteq Y_n$. From here, 
we obtain the following inclusions:\begin{enumerate} [(a)]
		\item $(X_{n+1})^c\cap Y_n = (H_{n+1})^c\cap Y_n$,   
		\item $Y_n\cap Y_{n-1}\supseteq (V_n\cap X_n)\cap Y_{n-1}$,  and
		\item  $(Y_n)^c\cap X_{n}\subseteq (V_n)^c \cap X_n$.  
	\end{enumerate}
	
	Now set
	\begin{align*}
	x_n &:= \mu\left(X_n^c \vert \bigcap_{m<n} Y_m\right) & \textrm{ and } && y_n &:=\mu\left(Y_n^c\vert X_n \cap (\bigcap_{m<n} Y_m)  \right).
	\end{align*}
	We begin by establishing two inductive relations for the sequences $x_n$ and $y_n$.  First of all, using (a) and (b) we have,
	\begin{align}
	x_{n+1} = \frac{\mu((X_{n+1})^c\cap (\bigcap_{m\leqslant n} Y_m ))} {\mu(\bigcap_{m\leqslant n} Y_m )}
	&=  \frac{\mu((H_{n+1})^c \cap (\bigcap_{m\leqslant n} Y_m ))} {\mu(\bigcap_{m\leqslant n} Y_m )}\notag \\
	&\leqslant  \frac{\mu((H_{n+1})^c ) } {\mu(Y_n \vert (\bigcap_{m<n} Y_m ))} \qquad \textrm{ by $1$-independence}\notag \\
	&\leqslant  \frac{(1-p)^{v(G)}} {\mu(V_n \vert (\bigcap_{m<n} Y_m )) -\mu((X_n)^c \vert (\bigcap_{m<n} Y_m ))}\notag \\
	&\leqslant \frac{(1-p)^{v(G) }}{f_{1,G}(p) -x_n} \qquad \textrm{by $1$-independence}. \label{equation: xn recursive bound}
	\end{align}
	Secondly, using (c),
	\begin{align}
	y_{n} = \frac{\mu((Y_n)^c\cap X_n \cap (\bigcap_{m<n} Y_m ))} {\mu(X_n \cap (\bigcap_{m<n} Y_m ))} &\leqslant  \frac{\mu((V_n)^c \cap (\bigcap_{m<n} Y_m ))} {\mu(X_n \cap (\bigcap_{m<n} Y_m ))}\notag \\
	&\leqslant  \frac{\mu((V_n)^c ) } {\mu(X_n \vert (\bigcap_{m<n} Y_m ))} \qquad \textrm{ by $1$-independence}\notag \\
	&	\leqslant  \frac{(1-f_{1,G}(p))} {1-x_n}. \label{equation: yn recursive bound}
	\end{align}
	Now if (\ref{equation: bounding equation}) is satisfied, we claim that $x_n\leqslant \alpha f_{1,G}(p)$ for all $n$. Indeed $x_1=0$, and if $x_n \leqslant \alpha f_{1,G}(p)$, then by (\ref{equation: yn recursive bound}) 
	\begin{align} y_n\leqslant\frac{1-f_{1,G}(p)}{1-\alpha f_{1,G}(p)}= 1- \frac{(1-\alpha)f_{1,G}(p)}{1-\alpha f_{1,G}(p)}<1. \nonumber \end{align}
	Furthermore, we have by (\ref{equation: xn recursive bound}) and (\ref{equation: bounding equation}) that
	\begin{align} x_{n+1}\leqslant \frac{(1-p)^{v(G)}}{(1-\alpha)f_{1,G}(p)}\leqslant \alpha f_{1,G}(p), \nonumber\end{align}
	so our claim follows by induction.

	Finally, we have that
	\begin{align*}
	\mu(Y_{\ell})= \prod_{i=1}^{\ell} (1-x_i)(1-y_i)&> \left(\frac{(1-\alpha)f_{1,G}(p)}{1-\alpha f_{1,G}(p)} \right)^{\ell}\left(1-\alpha f_{1,G}(p)\right)^{\ell}= \left( (1-\alpha) f_{1,G}(p)\right)^{\ell}.
	\end{align*}
	
\end{proof}
For any finite connected graph $G$, $f_{1,G}(p)$ is a non-decreasing function of $p$ with $f_{1,G}(p)=1$. Thus the function $(f_{1,G}(p))^2$ is also non-decreasing in $p$ and attains a maximum value of $1$ at $p=1$. On the other hand, the function $4(1-p)^{v(G)}$ is strictly decreasing in $p$ and is equal to $4$ at $p=0$. Thus there exists a unique solution $p_{\star}=p_{\star}(G)$ in the interval $[0,1]$ to the equation 
\begin{align}\label{equation:  pstar}
(f_{1,G}(p))^2 = 4(1-p)^{v(G)}.
\end{align}
Theorem~\ref{theorem: crossing G times Pn} thus has the following immediate corollary.
\begin{corollary}\label{corollary: bound on long path constant in products with the line}
	Let $G$ be a finite connected graph. Let $p_{\star}=p_{\star}(G)$ be as above. Then
	\[p_{1, \ell \mathit{p}}(\mathbb{Z}\times G)  \leqslant p_{\star}.\]
\end{corollary}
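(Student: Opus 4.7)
The plan is to deduce the corollary directly from Theorem~\ref{theorem: crossing G times Pn}, together with the observation that the right-hand side of the hypothesis is minimised by taking $\alpha=1/2$, at which point the constant $1/(\alpha(1-\alpha))$ equals $4$, matching the definition of $p_{\star}$. The first step is to check that $p_{\star}$ is well-defined: $(f_{1,G}(p))^{2}$ is non-decreasing in $p$ with value $1$ at $p=1$, while $4(1-p)^{v(G)}$ is strictly decreasing from $4$ to $0$, so by continuity (it is straightforward to check that $f_{1,G}$ is continuous, or alternatively one can use intermediate value arguments on each side) there is a unique crossing point $p_{\star}\in[0,1]$. If $p_{\star}=1$ the statement is trivial, so I will assume $p_{\star}<1$, in which case $f_{1,G}(p_{\star})>0$.

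Next, fix any $p\geqslant p_{\star}$. By monotonicity, $(f_{1,G}(p))^{2}\geqslant 4(1-p)^{v(G)}$, which is exactly the hypothesis of Theorem~\ref{theorem: crossing G times Pn} with the choice $\alpha=1/2$. The theorem then yields, for every $\ell\in\mathbb{N}$,
\[
f_{1,P_{\ell}\times G}(p)\geqslant \left(\tfrac{1}{2}f_{1,G}(p)\right)^{\ell}>0.
\]
This is the key quantitative input: a strictly positive lower bound on the probability that $P_{\ell}\times G$ is spanned by a connected subgraph.

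Now I would pass from $P_{\ell}\times G$ back to the infinite host $\mathbb{Z}\times G$. Given any $\mu\in\mathcal{M}_{1,\geqslant p}(\mathbb{Z}\times G)$, its restriction to the edges of $[\ell]\times V(G)$ lies in $\mathcal{M}_{1,\geqslant p}(P_{\ell}\times G)$, so under $\mu$ the subgraph induced on $[\ell]\times V(G)$ is connected with probability at least $\bigl(\tfrac{1}{2}f_{1,G}(p)\bigr)^{\ell}>0$. Whenever this connected event occurs, $\mathbf{(\mathbb{Z}\times G)}_{\mu}$ contains a path between a vertex in $\{1\}\times V(G)$ and a vertex in $\{\ell\}\times V(G)$, and such a path has length at least $\ell-1$. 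Thus, for any $n\in\mathbb{N}$, taking $\ell=n+1$ gives
\[
\mu(\exists \text{ open path of length } n)\ \geqslant\ \bigl(\tfrac{1}{2}f_{1,G}(p)\bigr)^{n+1}>0.
\]
Since this holds for every $\mu\in\mathcal{M}_{1,\geqslant p}(\mathbb{Z}\times G)$ and every $n$, we conclude $p\in\{p'\in[0,1]:\forall\mu\in\mathcal{M}_{1,\geqslant p'}(\mathbb{Z}\times G),\forall n,\ \mu(\exists\text{ open path of length }n)>0\}$, and therefore $p_{1,\ell p}(\mathbb{Z}\times G)\leqslant p_{\star}$ by the definition of the long paths critical probability as an infimum.

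The proof is essentially an assembly of pieces already in hand, so there is no serious obstacle. The only subtle point, and the one worth highlighting explicitly in the write-up, is the dual role of the constant $4$: it arises as the minimum of $1/(\alpha(1-\alpha))$ on $(0,1/2]$, which explains why equation~(\ref{equation:  pstar}) is the natural critical threshold extracted from Theorem~\ref{theorem: crossing G times Pn}. Everything else is bookkeeping: monotonicity of the hypothesis in $p$, the trivial handling of the boundary case $p_{\star}=1$, and the observation that connectivity of $[\ell]\times V(G)$ forces an open path of length $\geqslant \ell-1$.
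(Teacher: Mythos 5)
Your proposal is correct and takes essentially the same approach as the paper: the paper's entire proof is ``Apply Theorem~\ref{theorem: crossing G times Pn} with $\alpha=1/2$,'' and you have simply filled in the routine steps it leaves implicit (the monotonicity argument showing~\eqref{equation: bounding equation} holds for all $p\geqslant p_\star$ with $\alpha=1/2$, and the passage from connectivity of the finite segments $P_\ell\times G$ to the existence of arbitrarily long open paths in $\mathbb{Z}\times G$).
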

\begin{proof}
	Apply Theorem~\ref{theorem: crossing G times Pn} with $\alpha=1/2$.
\end{proof}

\section{Imaginary limits of real constructions: a preliminary lemma}\label{section: preliminary lemma}
In this section we prove a lemma that we shall use in Sections~\ref{section: line lattice} and~\ref{section: complete graphs}.  The lemma will allow us to use certain vertex-based constructions to create other $1$-ipms that cannot be represented as vertex-based constructions (or would correspond to vertex-based constructions with `complex weights').

\begin{lemma}\label{lemma: preliminary lemma}
Let $G$ be a finite graph, and let $\mathcal{Q} := \{Q_{H}(\theta): H \subseteq G\}$ be a set of polynomials with real coefficients, indexed by subgraphs of $G$.  Given $\theta \in \mathbb{C}$, let $\mu_{\theta}$ be the following function from subgraphs of $G$ to $\mathbb{C}$:
\begin{equation}
\mu_{\theta}(H) := Q_{H}(\theta). \nonumber
\end{equation}
Suppose there exists a non-trivial interval $I \subseteq \mathbb{R}$ such that, for all $\theta \in I$, the function $\mu_{\theta}$ defines a $1$-ipm on $G$.  Suppose further that there exists a set $X \subseteq \mathbb{C}$ such that, for all $\theta \in X$ and all $H \subseteq G$, $\mu_{\theta}(H)$ is a non-negative real number.  Then $\mu_{\theta}$ is a $1$-ipm on $G$ for all $\theta \in X$.
\end{lemma}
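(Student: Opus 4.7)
The plan is to observe that being a $1$-ipm on the finite graph $G$ is equivalent to a finite list of equality conditions (together with non-negativity), and that each of these equalities can be written as a polynomial identity in $\theta$. Since these polynomial identities already hold on the non-trivial interval $I \subseteq \mathbb{R}$, they hold everywhere, and in particular at each $\theta \in X$. Combined with the assumed non-negativity on $X$, this will give the conclusion.

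More concretely, first I would enumerate the conditions for $\mu_\theta$ to be a $1$-ipm on $G$: (i) $\mu_\theta(H) \geqslant 0$ for every spanning subgraph $H$ of $G$; (ii) $\sum_{H \subseteq G} \mu_\theta(H) = 1$; and (iii) for every pair $(F_1, F_2)$ of $1$-distant edge sets of $G$ and every choice of $A_1 \subseteq F_1$, $A_2 \subseteq F_2$,
\begin{equation*}
\mu_\theta\bigl(\mathbf{G}_\mu \cap F_1 = A_1,\ \mathbf{G}_\mu \cap F_2 = A_2\bigr) = \mu_\theta\bigl(\mathbf{G}_\mu \cap F_1 = A_1\bigr)\cdot \mu_\theta\bigl(\mathbf{G}_\mu \cap F_2 = A_2\bigr).
\end{equation*}
Because $G$ is finite, each of the probabilities appearing in (ii) and (iii) is a finite sum of the polynomials $Q_H(\theta)$, hence itself a polynomial in $\theta$ with real coefficients. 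Thus (ii) and (iii) amount to finitely many polynomial identities of the form $R(\theta) = 0$ for certain polynomials $R \in \mathbb{R}[\theta]$.

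Next I would invoke the hypothesis that $\mu_\theta$ is a $1$-ipm for every $\theta$ in the non-trivial interval $I$. This means each of the polynomials $R$ corresponding to (ii) and (iii) vanishes on $I$. Since $I$ is infinite and $R$ has only finitely many roots unless it is identically zero, each such $R$ is the zero polynomial. Therefore the identities (ii) and (iii) hold for all $\theta \in \mathbb{C}$, and in particular for every $\theta \in X$.

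Finally, for $\theta \in X$ the non-negativity requirement (i) is guaranteed by hypothesis, so all three conditions (i)--(iii) hold at every $\theta \in X$, and $\mu_\theta$ is indeed a $1$-ipm on $G$ for all $\theta \in X$. There is no substantive obstacle here beyond making sure that the $1$-independence condition really is expressible as finitely many polynomial identities, which follows immediately from the finiteness of $G$ and the fact that each $Q_H$ is a polynomial in $\theta$.
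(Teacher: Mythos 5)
Your proposal is correct and follows essentially the same route as the paper: reduce the defining conditions of a $1$-ipm (apart from non-negativity) to finitely many polynomial identities in $\theta$, note that each vanishes on the infinite set $I$ and is therefore the zero polynomial, and then use the hypothesised non-negativity on $X$ to conclude. The only cosmetic difference is that you phrase the $1$-independence condition in terms of $1$-distant edge sets $F_1, F_2$, while the paper phrases it via disjoint vertex sets $A, B$ and subgraphs of $G[A], G[B]$; these are equivalent formulations.
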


\begin{proof}
We start by proving that $\mu_{\theta}$ is a measure on $G$ for all $\theta \in X$.  As $\mu_{\theta}(H)$ is a non-negative real number for all $\theta \in X$ and all $H \subseteq G$, all that is left to prove is that
\begin{equation}\label{equation: 1-ipm check 1}
\left(\sum_{H \subseteq G} Q_{H}(\theta) \right) - 1 = 0.
\end{equation}
The left hand side of (\ref{equation: 1-ipm check 1}) is a polynomial in $\theta$ with real coefficients, and is equal to zero for all $\theta$ in the interval $I$.  By the fact that a non-zero polynomial over any field has only finitely many roots,  the polynomial is identically zero and so (\ref{equation: 1-ipm check 1}) holds for all $\theta$.

We now show that $\mu_{\theta}$ is a $1$-ipm on $G$ for all $\theta \in X$.  To do this we must show that the following holds true for all $\theta \in X$,  for all $A,B \subseteq V(G)$ such that $A$ and $B$ are disjoint, and all $G_{1}$ and $G_{2}$ such that $G_{1}$ is a subgraph of $G[A]$ while $G_{2}$ is a subgraph of $G[B]$:
\begin{equation}\label{equation: 1-ipm check 2}
\mu_{\theta}\left(\mathbf{G}_{\mu_{\theta}} \left[A\right] = G_{1},\mathbf{G}_{\mu_{\theta}} \left[B\right] = G_{2}\right) = \mu_{\theta}\left(\mathbf{G}_{\mu_{\theta}}\left[A\right] = G_{1}\right) \mu_{\theta}\left( \mathbf{G}_{\mu_{\theta}} \left[B\right] = G_{2}\right).
\end{equation}
Both sides of (\ref{equation: 1-ipm check 2}) are polynomials in $\theta$ with real coefficients --- the left hand side, for example, can be written as
\begin{align*}
\sum_{H \subseteq G: \  H[A]=G_1, \ H[B]=G_2} Q_H(\theta).
\end{align*}  As $\mu_{\theta}$ is a $1$-ipm on $G$ for all $\theta \in I$, we have that these two polynomials agree on $I$, and so by the Fundamental Theorem of Algebra, they must be the same polynomial.  Thus (\ref{equation: 1-ipm check 2}) holds as required.
\end{proof}

\section{The line lattice $\mathbb{Z}$}\label{section: line lattice}
In this section we prove Theorem \ref{theorem: connected function Pn} on the connectivity function of paths.  Recall that, given $n \in \mathbb{N}_{\geqslant 2}$ and $p \in [0,1]$, we let $\theta = \theta(p) := \frac{1+\sqrt{4p-3}}{2}$ and $p_{n} := \frac{1}{4}\left(3 - \tan^{2}\left(\frac{\pi}{n+1} \right) \right)$.  Let $g_{n}(\theta):= \sum_{j = 0}^{n} \theta^{j}(1-\theta)^{n-j}$. 

We begin by constructing a measure $\nu_{p} \in \mathcal{M}_{1,\geqslant p}(P_{n})$ as follows.  Let us start with the case $p \geqslant \frac{3}{4}$.  For each vertex of $P_{n}$, we set it to state $0$ with probability $\theta$, and set it to state $1$ otherwise, and we do this independently for every vertex.  Recall that for each $j \in [n]$ we write $S_j$ for the state of vertex $j$; in this construction, the states are independent and identically distributed random variables. We set the edge $\{j,j+1\}$ to be open if $S_j \leqslant S_{j+1}$, and closed otherwise.  Thus, as $p = \theta + (1-\theta)^{2}$, we have that each edge is open with probability $p$.  Moreover $(\mathbf{P}_{n})_{\nu_{p}}$ will be connected if and only if there exists some $j \in [n+1]$ such that $S_{k} = 0$ for all $k < j$, while $S_{k} = 1$ for all $k \geqslant j$.  Therefore $(\mathbf{P}_{n})_{\nu_{p}}$ is connected with probability $g_{n}(\theta)$.  As this construction is vertex-based, it is clear that it is $1$-independent.

When $p < \frac{3}{4}$ we have that $\theta$ is a complex number, and so the above construction is no longer valid.  However, as discussed in Section \ref{section: preliminary lemma}, we will show that it is possible to extend this construction to all $p \in [p_{n},1]$.  For each subgraph $G$ of $P_{n}$, set $Q_{G}(\theta)$ to be the polynomial $\nu_{p}((\mathbf{P}_n)_{\nu_{p}}=G)$ for all $\theta \in [\frac{3}{4},1]$.  The following claim, together with Lemma \ref{lemma: preliminary lemma}, shows that in fact $\nu_{p}$ is a $1$-ipm on $P_{n}$ for all $p \in [p_{n},1]$. 
\begin{claim}\label{claim: Pn book keeping}
For all $p \in [p_{n},\frac{3}{4})$ and all $G \subseteq P_{n}$ we have that $Q_{G}\left(\theta\left(p\right)\right)$ is non-negative real number.
\end{claim}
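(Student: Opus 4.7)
The plan is to write $Q_G(\theta)$ as an explicit product over the maximal open-edge components of $G$, compute each factor in closed trigonometric form at the complex value $\theta(p)$, and verify that the resulting product is manifestly real and non-negative for $p \in [p_n, \tfrac{3}{4})$.

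I would begin by identifying the component decomposition of $G$. Since $G$ is a subgraph of $P_n$, its open-edge components partition $[n]$ into consecutive intervals $C_1, \ldots, C_t$ of sizes $m_1, \ldots, m_t$ (with $\sum m_i = n$), separated by the closed edges of $G$. In the original construction with $\theta$ real, a state assignment $(S_1, \ldots, S_n) \in \{0,1\}^n$ realises $G$ precisely when the states are non-decreasing within each $C_i$ and carry the descent pattern $(1,0)$ across every closed edge. Summing $\theta^{\#0}(1-\theta)^{\#1}$ over compatible assignments then factorises, with each interval of size $m$ contributing $g_m(\theta)$ if no boundary condition is present (the $t=1$ case), $g_m(\theta) - \theta^m$ for $C_1$ when $t \geq 2$ (rightmost state forced to $1$), $g_m(\theta) - (1-\theta)^m$ for $C_t$ when $t \geq 2$ (leftmost state forced to $0$), and $g_m(\theta) - \theta^m - (1-\theta)^m$ for interior components (both ends forced). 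Since this expression for $Q_G(\theta)$ is a polynomial identity in $\theta$ that holds on $[\tfrac{3}{4}, 1]$, it holds for all complex $\theta$.

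Next I would evaluate at $\theta = \theta(p)$ with $p \in [p_n, \tfrac{3}{4})$. Here $\theta(1-\theta) = 1-p$ and $1-\theta = \overline{\theta}$, so writing $\theta = re^{i\phi}$ gives $r = \sqrt{1-p}$ and $\cos\phi = 1/(2r)$; the identity $1 - p_n = \sec^2(\pi/(n+1))/4$ pins down the range of $\phi$ for $p \in [p_n, \tfrac{3}{4})$ as exactly $(0, \pi/(n+1)]$. A geometric-sum identity gives $g_m(\theta) = r^m \sin((m+1)\phi)/\sin\phi$, and two applications of the addition formula $\sin((m+1)\phi) = \sin(m\phi)\cos\phi + \cos(m\phi)\sin\phi$ yield
\begin{align*}
g_m(\theta) - \theta^m &= \frac{r^m \sin(m\phi)\, e^{-i\phi}}{\sin\phi}, \\
g_m(\theta) - (1-\theta)^m &= \frac{r^m \sin(m\phi)\, e^{i\phi}}{\sin\phi}, \\
g_m(\theta) - \theta^m - (1-\theta)^m &= \frac{r^m \sin((m-1)\phi)}{\sin\phi}.
\end{align*}

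Multiplying the component factors together, the phases $e^{-i\phi}$ and $e^{i\phi}$ from the two end-components cancel, giving the purely real expression
\[
Q_G(\theta) = \frac{r^n}{(\sin\phi)^t}\,\sin(m_1\phi)\,\sin(m_t\phi)\prod_{i=2}^{t-1}\sin\bigl((m_i-1)\phi\bigr)
\]
for $t \geq 2$, and the simpler $Q_G(\theta) = g_n(\theta) = r^n\sin((n+1)\phi)/\sin\phi$ for $t = 1$. Non-negativity then follows because each $m_i \leq n$ and $\phi \leq \pi/(n+1)$, so every sine argument lies in $[0, \pi]$. The main technical check is the pair of trigonometric identities giving the end-factor formulas and the cancellation of their phases that turns the product into a real non-negative quantity; this is also what forces the threshold to be exactly $p_n$, since for $p < p_n$ one would have $(n+1)\phi > \pi$ and the $t = 1$ factor $g_n(\theta)$ would become negative.
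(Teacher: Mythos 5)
Your proof is correct, and it takes a genuinely different route from the paper's. The paper proves the claim by induction on $n$: the base case $n=2$ is trivial, the case $G = P_n$ is handled by the trigonometric evaluation $g_n(\theta(p)) = \frac{2r^{n+1}}{\sqrt{3-4p}}\sin((n+1)\phi)$ with $\phi = \arctan\sqrt{3-4p}$, and for $G \neq P_n$ the paper picks a single missing edge $\{j,j+1\}$, notes that $S_j=1$ and $S_{j+1}=0$ are forced and contribute the factor $\theta(1-\theta)=1-p$, and splits off $Q_G(\theta) = (1-p)\,Q_{G_1}(\theta)\,Q_{G_2}(\theta)$ with $G_1,G_2$ subgraphs of strictly shorter paths; the induction hypothesis (together with the monotonicity of $(p_n)$) finishes. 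You instead bypass induction entirely, go straight to the maximal-open-component decomposition of $G$, and write $Q_G(\theta)$ as an explicit product of the three boundary-conditioned factors $g_m(\theta)-\theta^m$, $g_m(\theta)-(1-\theta)^m$ and $g_m(\theta)-\theta^m-(1-\theta)^m$. I checked those three closed forms $\frac{r^m\sin(m\phi)e^{-i\phi}}{\sin\phi}$, $\frac{r^m\sin(m\phi)e^{i\phi}}{\sin\phi}$, $\frac{r^m\sin((m-1)\phi)}{\sin\phi}$ against $1-\theta=\bar\theta=re^{-i\phi}$ and the product-to-sum formula, and they are right; the phases from the two end components cancel as you say, and for $t\geqslant 2$ each size $m_i\leqslant n-1$ so every sine argument is at most $(n-1)\pi/(n+1)<\pi$, while for $t=1$ the argument $(n+1)\phi\leqslant\pi$ exactly at $p=p_n$ (a quick cross-check: two components of size $2$ give $(g_2-\theta^2)(g_2-(1-\theta)^2)=\theta(1-\theta)=1-p$, agreeing with the paper's factorisation). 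The paper's inductive argument is shorter and hides the component bookkeeping; your direct approach is more work to set up but yields, as a by-product, a fully explicit real formula for the probability $\nu_p$ assigns to every subgraph of $P_n$, which is strictly more information than the claim requires.
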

\begin{proof}
We proceed by induction on $n$.  When $n = 2$ we have that there are only two possible subgraphs of $P_{2}$, which are $P_{2}$ itself and its complement $\overline{P_{2}}$.  We have that $Q_{P_{2}}(\theta(p)) = p$ and $Q_{\overline{P_{2}}}(\theta(p)) = 1-p$, so the claim holds as required for $n=2$.

Let us now assume that $n > 2$ and that the claim is true for all cases from $2$ up to $n-1$.  We split into two further subcases.  We first deal with the case that $G = P_{n}$.  We have that $Q_{P_{n}}\left(\theta\left(p\right)\right) = g_{n}(\theta)$.  For $p < \frac{3}{4}$ we can write
\begin{equation}\label{equation: g_n(theta)}
g_{n}(\theta) = \frac{\theta^{n+1} - (1-\theta)^{n+1}}{2\theta -1}.
\end{equation}
When $p < \frac{3}{4}$ we have that $\theta$ and $1-\theta$ are complex conjugates, and also that $2\theta - 1$ is a pure imaginary number.  Thus both the numerator and denominator of the above fraction are pure imaginary, and so $g_{n}\left(\theta\left(p\right)\right)$ is a real number for all $p < \frac{3}{4}$.  By writing $\theta = re^{i \phi}$, where $r := \sqrt{1-p}$ and $\phi := \arctan \left(\sqrt{3-4p}\right)$, we can rewrite (\ref{equation: g_n(theta)}) as
\begin{equation}\label{equation: g_n(theta) 2}
g_{n}\left(\theta\left(p\right)\right) = \frac{2 r^{n+1}}{\sqrt{3-4p}}\sin \left(\left(n+1\right) \phi \right). 
\end{equation}
Now $p\in [p_n, \frac{3}{4})$ implies $0 < \phi \leq \arctan \left(\sqrt{3-4p_n}\right)=\frac{\pi}{n+1}$, which in turn gives $\sin \left(\left(n+1\right) \phi \right)\geq 0$.
Thus by \eqref{equation: g_n(theta) 2} above, $g_{n}\left(\theta\left( p \right)\right)$ is a  non-negative real number for all $p$ in the interval $[p_{n},\frac{3}{4})$,  as required.

We now deal with the case that $G \neq P_{n}$.  Let us consider the vertex-based construction from which $Q_{G}(\theta)$ was defined.   As not every edge is present in $G$ we have that there exists some $j \in [n-1]$ such $\{j,j+1\}$ is not an edge, and so $S_{j} = 1$ while $S_{j+1} = 0$.  Note that if $j \geqslant 2$, then the edge $\{j-1,j\}$ is present in $G$ regardless of the state of vertex $j-1$.  Similarly, if $j \leqslant n-2$, then the edge $\{j+1,j+2\}$ is present in $G$ regardless of the state of vertex $j+2$.  If we write $G_{1} := G\left[\{1,\ldots,j-1\}\right]$ and $G_{2} := G\left[\{j+2,\ldots,n\}\right]$, then we have that
\begin{equation}\label{equation: polynomials}
Q_{G}(\theta) = \theta(1-\theta)Q_{G_{1}}(\theta)Q_{G_{2}}(\theta). 
\end{equation}
Now, by induction, we have that $Q_{G_{1}}\left( \theta\left( p \right)\right)$ and $Q_{G_{2}}\left( \theta \left( p \right) \right)$ are positive real numbers for all $p \in [p_{n},\frac{3}{4})$; to make this inductive step work we are using the fact that $(p_{n})_{n \geqslant 2}$ forms an increasing sequence, and so $p \geqslant p_{n}$ implies that $p \geqslant p_{s}$ for all $s \leqslant n$.  As $\theta\left(p\right)\left(1-\theta\left(p\right)\right) = 1-p$, we have that (\ref{equation: polynomials}) is a positive real for all $p \in [p_{n},\frac{3}{4})$, and so we have proven the claim.
\end{proof}

Note that as this proof shows that $g_{n}\left(\theta \left(p_{n}\right)\right) = 0$, we have that the probability $(\mathbf{P}_{n})_{\nu_{p_{n}}}$ is connected is equal to $0$.  As $\nu_{p_{n}} \in \mathcal{M}_{1,\geqslant p}(P_{n})$ for all $p \leqslant p_{n}$, we have that $f_{1,P_{n}}(p) = 0$ for all $p \leqslant p_{n}$.

We now prove that this construction is optimal with respect to the connectivity function.  Note that the following proof involves  essentially following the proof of Theorem \ref{theorem: crossing G times Pn} when $G$ consists of a single point and checking that the above construction is tight at every stage of this proof. Finally, we should emphasise that the main ideas in the construction of $\nu_p$ and its analysis are due to Balister and Bollob\'as~\cite{BalisterBollobas12} (they considered slightly different probabilities for vertex states, setting $S_k=0$ with probability $q_k$, where $q_k$ is defined for $k\in [n]$ by $q_1=0$ and  by the recurrence relation $q_k=\min\left(\frac{1-p}{1-q_{k-1}}, 1\right)$ for $k\geq 2$, which corresponds exactly to the equality case in inequality \eqref{equation: qn 2} below).

\begin{proof}[Proof of Theorem \ref{theorem: connected function Pn}]
The above construction discussed shows that
\begin{equation}
f_{1,P_{n}}(p) \leqslant \begin{cases} g_{n}(\theta) &\text{ for } p \in [p_{n},1] , \\
							  0 &\text{ for } p \in [0,p_{n}]. \end{cases} \nonumber
\end{equation}
It is clear that $f_{1,P_{n}}(p) \geqslant 0$ for all $p$, and so all that remains to show is that $f_{1,P_{n}}(p) \geqslant g_{n}\left(\theta\left(p\right)\right)$ for all $p \in [p_{n},1]$.

Let $\mu \in \mathcal{M}_{1,\geqslant p}(P_{n})$. For $k \in [n]$, let $X_{k}$ be the event that the subgraph of $(\mathbf{P}_{n})_{\mu}$ induced by the vertex set $[k]$ is connected, and let $H_{k}$ be the event that the edge $\{k-1,k\}$ is not present in $(\mathbf{P}_{n})_{\mu}$.  Applying random sparsification as in Remark~\ref{remark: random sparsification} if necessary, we may assume without loss of generality that for every $k$, the event $H_k$ occurs with probability exactly $1-p$.

Let $q_{2}^{\mu} := \mu\left((X_2)^c\right)=1-p$, and for $k > 2$ let $q_{k}^{\mu} := \mu\left(\left(X_{k}\right)^{c}| X_{k-1}\right)$.  We have that
\begin{eqnarray}
q_{k}^{\mu} &=& \frac{\mu(H_{k} \cap X_{k-1})}{\mu(X_{k-1})}\nonumber \\ 
& \leqslant &  \frac{\mu(H_{k} \cap X_{k-2})}{\mu(X_{k-1})} \label{equation: qn 1}\\ 
&=& \frac{\mu(H_{k})\mu(X_{k-2})}{\mu(X_{k-1})} \qquad \textrm{ by $1$-independence} \nonumber \\
&\leqslant & \frac{1-p}{1-q_{k-1}^{\mu}}.\label{equation: qn 2}
\end{eqnarray}
Note that $\mu((\mathbf{P}_{n})_\mu = P_{n}) = \prod_{j = 2}^{n}(1-q_{n}^{\mu})$.  Thus to show that the previous construction is optimal with respect to the connectivity function it is enough to show that equality holds for inequalities (\ref{equation: qn 1}) and (\ref{equation: qn 2}) when $\mu = \nu_{p}$.  In the measure $\nu_{p}$, we have that every edge is present with probability exactly $p$, thus $\nu_{p}(H_{k}) = 1-p$ and so equality holds in (\ref{equation: qn 2}).  To prove that equality holds in (\ref{equation: qn 1}), it is sufficient so show that
\begin{equation}\label{equation: qn 3}
\nu_{p}(H_{k} \cap X_{k-1}) = \nu_{p}(H_{k} \cap X_{k-2}).
\end{equation}
Both the left and right hand sides of (\ref{equation: qn 3}) can be expressed as polynomials in $\theta(p)$, and so it is sufficient to show that equality holds for $p \geqslant \frac{3}{4}$, as that will show they are the same polynomial (and so equality holds for all $p \in [p_{n},1])$.   Suppose that the event $(H_{k} \cap X_{k-2})$ occurs.  As $H_{k}$ has occurred we have that $S_{k-1} = 1$ while $S_{k} = 0$.  As $S_{k-1} = 1$, we have that edge $\{k-2,k-1\}$ is open, regardless of $S_{k-2}$.  Thus, as $ X_{k-2}$ has occurred we also have that $X_{k-1}$ has occurred.  Therefore $(H_{k} \cap X_{k-1})$ has also occurred, and so we are done.
\end{proof}

We remark in similar fashion to the above proof that the following holds for any $\mu \in \mathcal{M}_{1,\geqslant p}(P_{n})$:
\begin{eqnarray}
\mu(X_{n}) &\geqslant& \mu(X_{n-1}) - \mu(X_{n-1} \cap H_{n}) \nonumber \\
 &\geqslant& \mu(X_{n-1}) - \mu(X_{n-2} \cap H_{n}) \nonumber \\
&=& \mu(X_{n-1}) - \mu(X_{n-2})\mu(H_{n}^{c})  \qquad \textrm{ by $1$-independence}\nonumber \\
&\geqslant& \mu(X_{n-1}) - (1-p)\mu(X_{n-2}). \nonumber
\end{eqnarray}
Moreover, by once again considering what states of vertices can lead to the various events, we have that equality holds for all of the above inequalities when $\mu = \nu_{p}$.  This leads us to another way to define $g_{n}\left(\theta \left(p\right)\right)$: 
let $g_{1}\left(\theta \left(p \right) \right) :=1$,  $g_{2}\left(\theta \left(p\right)\right) := p$, and for all $n \geqslant 3$ we have that
\begin{equation}
g_{n}\left(\theta \left(p\right)\right) = g_{n-1}\left(\theta \left(p\right)\right) - (1-p)g_{n-2}\left(\theta \left(p\right)\right). \nonumber
\end{equation}

We conclude this section with a proof of Theorem~\ref{theorem: long paths critical prob}(i).

\begin{proof}[Proof of Theorem~\ref{theorem: long paths critical prob}(i)]
For the upper bound we plug $f_{1,P_1}(p)=1$ into equation~(\ref{equation:  pstar}), solve that equation to get $p_{\star}(P_1)=\frac{3}{4}$ and apply Corollary~\ref{corollary: bound on long path constant in products with the line} to obtain $p_{1, \ell \mathit{p}}(\mathbb{Z})\leqslant \frac{3}{4}$.

For the lower bound, let $p<\frac{3}{4}$ be fixed. As the sequence $(p_n)_{n\in \mathbb{N}}$ is monotone increasing and tends to $3/4$ as $n\rightarrow \infty$, there exists $N \in \mathbb{N}$ such that $p<p_N$. We showed in Theorem~\ref{theorem: connected function Pn} that there exists a measure $\nu_{p_N}\in \mathcal{M}_{1,\geqslant p_N}(P_N)$ such that the probability $\left(\mathbf{P}_N\right)_{\nu_{p_N}}$ is connected is equal to zero.

We use this measure to create a measure $\nu \in \mathcal{M}_{1,\geqslant p}(\mathbb{Z})$. For each $i\in \mathbb{Z}$, we let the subgraphs $\mathbf{\mathbb{Z}}_{\nu}[ (i(N-1)+[N])]$ on horizontal shifts of $P_N$ by $i(N-1)$ be independent identically distributed random variables with distribution given by $\nu_{p_N}$. This gives rise to a $1$-independent model $\nu$ on $\mathbb{Z}$ with edge-probability at least $p$ (in fact at least $p_N$). Furthermore, all connected components of $\mathbf{\mathbb{Z}}_{\nu}$ have size at most $2(N-1)-1$. In particular, $p_{1, \ell \mathit{p}}(\mathbb{Z})\geqslant p$. Since $p<\frac{3}{4}$ was chosen arbitrarily, this gives the required lower bound $p_{1, \ell  \mathit{p}}(\mathbb{Z})\geqslant \frac{3}{4}$.
\end{proof}

\section{The ladder lattice $\mathbb{Z}\times K_2$}\label{section: ladder}
In this section we construct a family of $1$-ipms on segments of the ladder $\mathbb{Z}\times P_2$ with edge-probability close to $2/3$ for which with probability $1$ there are no open left-right crossings. The idea of this construction is due to Walters and the second author~\cite{FalgasRavry12} (though the technical work involved in rigorously showing the construction works is new).

Let us begin by giving an outline of our construction. We write the vertex set $V(P_{N} \times P_{2})$ as $[N] \times [2]$.  As in the case of the line lattice, we independently assign to each vertex $(n,y)$ a random state $S_{(n,y)}$.  If $n+y$ is even, then we let
	\begin{align*}
	S_{(n,y)}:=\begin{cases}
	2 & \textrm { with probability }p_n,\\
	0 & \textrm { with probability }1-p_n;
	\end{cases}
	\end{align*} 
	while if instead $n+y$ is odd, then we let
	\begin{align*}
	S_{(n,y)}:=\begin{cases}
	2 & \textrm { with probability }r_n,\\
	1 & \textrm{ with probability }s_n,\\
	0 & \textrm { with probability }1-r_n-s_n.
	\end{cases}
	\end{align*}
	Here $(p_n)_{n\in \mathbb{N}}, (r_n)_{n\in \mathbb{N}}, (s_n)_{n\in \mathbb{N}}$ are suitably chosen sequences of real numbers, ensuring that the $S_{(n,y)}$ are well-defined random variables. We then define a random spanning subgraph $\mathbf{G}_{\mu}$ of $G:=P_N\times P_2$ from the random vertex states $S_{(n,y)}$: $(n,y)\in [N]\times [2]$ as follows:
	\begin{itemize}
		\item for each $n\in [N-1]$ and $y\in [2]$, the horizontal edge $\{(n,y),(n+1, y)\}$ is open in $\mathbf{G}_{\mu}$ if and only if $S_{(n,y)}\leqslant S_{(n+1, y)}$,
		\item for each $n\in [N]$, the vertical edge $\{(n,1), (n,2)\}$ is open in $\mathbf{G}_{\mu}$ if and only if $\left(S_{(n,1)}-S_{(n,2)}\right)(1-S_{(n,1)})(1-S_{(n,2)})=0$.
	\end{itemize}
Note the condition for a vertical edge $\{(n,1), (n,2)\}$ to be open can be rephrased as if and only if either $S_{(n,1)}=S_{(n,2)}$ or one of $S_{(n,1)}, S_{(n,2)}$ is equal to $1$. So intuitively, the value of the $S_{(n,y)}$ must increase from left to right along open horizontal edges, and it must stay constant along open vertical edges unless one of the endpoints is in the special state $1$, which allows free passage up or down.

Clearly the bond percolation measure $\mu$ associated to our  random graph model $\mathbf{G}_{\mu}$ is a $1$-ipm on the ladder $G=P_N\times P_2$ as it is vertex-based. By making a judicious choice of the sequences  $(p_n)_{n\in \mathbb{N}}, (r_n)_{n\in \mathbb{N}}, (s_n)_{n\in \mathbb{N}}$ and taking $N$ sufficiently large, one can ensure that in addition $\mu$ satisfies  $d(\mu)\geq p$ and $\mu(\exists\textrm{ open left-right crossing})=0$. In particular, with this construction we prove the following result.
\begin{theorem}\label{theorem: ladder construction}
	Fix $p\in (\frac{1}{2}, \frac{2}{3})$. Then there exists $N\in \mathbb{N}$ such that for all $n \geqslant N$,
	\[p_{1, \times}(P_n\times P_2)\geqslant p.\]
\end{theorem}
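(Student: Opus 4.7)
The plan is to use the vertex-based construction from the preceding outline with a judicious choice of real sequences $(p_n), (r_n), (s_n)$. Since the construction is vertex-based, $\mu$ is automatically a $1$-ipm on $P_N\times P_2$, so the only work is to arrange both that $\mathrm{d}(\mu)\geqslant p$ and that $\mu(\exists\text{ open left-right crossing})=0$. I would begin by computing each edge-open probability in terms of the parameters: a horizontal edge between an even-parity vertex (state-$2$ probability $p_n$) and an odd-parity vertex (state-$2$ probability $r_{n+1}$, state-$1$ probability $s_{n+1}$) has open-probability $1-p_n(1-r_{n+1}-s_{n+1})$, with a mirror expression $1-(r_n+s_n)(1-p_{n+1})$ for the other orientation, while a vertical edge at column $n$ has open-probability $p_n r_n+(1-p_n)(1-r_n-s_n)+s_n$. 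Demanding each of these be at least $p$ gives a coupled system of recursive inequalities in $(p_n,r_n,s_n)$.

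Next, I would exhibit sequences solving this system under the boundary conditions $(p_1,r_1,s_1)=(1,1,0)$ (column $1$ deterministically at state $2$) and $(p_{n_0},r_{n_0},s_{n_0})=(0,0,0)$ for some $n_0=n_0(p)\leqslant N$ (column $n_0$ deterministically at state $0$), with the all-zero choice continued on every subsequent column. Taking the symmetric simplification $p_n=r_n=:q_n$ and $s_n=:t_n$, the constraints reduce to $q_n(1-q_{n+1}-t_{n+1})\leqslant 1-p$ and $2q_n^2-2q_n+1+q_n t_n\geqslant p$, and a descent recursion saturating these inequalities drives $(q_n,t_n)$ from $(1,0)$ down to $(0,0)$ in finitely many columns whenever $p<2/3$. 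The parameter $t_n$ serves as a slack variable absorbing the vertical constraint; for $p>2/3$ the descent instead stalls inside the ``forbidden band'' of $q_n$-values prescribed by the vertical inequality, which is what makes the threshold $2/3$ tight.

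Finally, I would verify the zero-crossing property by a case analysis over the finitely many possible state configurations in the intermediate columns. The key structural facts are that the state is weakly increasing along each open horizontal edge, and that along each open vertical edge the two endpoints are either equal or include a state-$1$ vertex. Since column $1$ is deterministically at state $2$ and column $n_0$ deterministically at state $0$, any open crossing would have to enter column $n_0$ horizontally from a state-$0$ vertex in column $n_0-1$; tracing component-reachability backwards through the intermediate columns, one checks that such state-$0$ vertices are only reachable via state-$1$ vertices that cannot be traversed without first raising the path's state, so no open left-right crossing exists in the support of $\mu$. The main obstacle is this last step: handling the ``backward-reachability'' contributions, where a state-$1$ vertex in an intermediate column reopens paths to earlier state-$0$ vertices, and showing rigorously that these still do not propagate to a left-right crossing; the alternating-parity structure of the ladder is essential to closing off every such possibility.
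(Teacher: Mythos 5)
There is a genuine gap, and it starts with a computational error that propagates into the rest of your argument. Your formula for the open probability of the first type of horizontal edge is wrong: with the rule ``open iff $S_{(n,y)}\leqslant S_{(n+1,y)}$'', an edge from a column-$n$ even-parity vertex in state $2$ to a column-$(n+1)$ odd-parity vertex in state $1$ is \emph{closed}, so the open probability is $1-p_n(1-r_{n+1})$, \emph{not} $1-p_n(1-r_{n+1}-s_{n+1})$; the state-$1$ special behaviour only affects vertical edges. (You correctly say later that states are ``weakly increasing along each open horizontal edge'', which is inconsistent with your earlier formula.) Because your stated constraint is weaker than the true one, the descent in your symmetric system would appear to succeed for $p$ well above $2/3$ as well (e.g., for $p=0.7$ one can take $q_2=0.3$, $t_2=0.4$ and close at column $3$ under your constraints), which contradicts $p_{1,\ell p}(\mathbb{Z}\times P_2)=\tfrac{2}{3}$; so your claimed ``stall inside a forbidden band at $\tfrac{2}{3}$'' does not follow from the constraint system you wrote down.

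The second, deeper issue is the symmetrization $p_n=r_n$. The paper's sequences deliberately split apart after two columns ($p_3\neq r_3$ already), and this asymmetry is what lets them navigate around the vertical-edge obstruction all the way to $\tfrac{2}{3}$. If you force $p_n=r_n=:q_n$ and use the \emph{correct} binding horizontal constraint $(q_n+t_n)(1-q_{n+1})\leqslant 1-p$ together with the vertical constraint, the minimum-$q_{n+1}$ recursion is $q_{n+1}=1-\frac{(1-p)q_n}{p-(1-q_n)^2}$, whose fixed point in $(0,1)$ exists precisely when $1-p\leqslant\frac{2}{3\sqrt{3}}$, i.e.\ when $p\gtrsim 0.615$; so the symmetric descent stalls well short of $\tfrac{2}{3}$. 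Finally, your backward-reachability sketch for the zero-crossing claim is not obviously closable as you write it; the paper's clean forward argument uses that any simple left--right path in the ladder is column-monotone (it crosses each column cut exactly once), which forces every vertex on the path into state $\{1,2\}$ whence it can never reach the all-zero column $n_0$. You would do well to adopt that monotonicity argument rather than chasing backward reachability.
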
	
\begin{proof}
Fix $p:=\frac{2}{3}-\varepsilon$, with $\varepsilon \in (0,\frac{1}{6})$. We start by defining the sequences $\left(p_n\right)_{n \in \mathbb{N}}$, $\left(r_n\right)_{n\in \mathbb{N}}$ and $\left(s_n\right)_{n \in \mathbb{N}}$ iteratively as follows. We set $p_1=r_1=1$ and $s_1=0$. Then for $n \in \mathbb{N}$, we let
\begin{align*}
p_{n+1}&=\left\{\begin{array}{ll}
1-\frac{1-p}{r_n+s_n} & \textrm{if }r_n+s_n\geqslant 1-p,\\
0 & \textrm{otherwise;}
\end{array}\right. \\
r_{n+1}&=\left\{\begin{array}{ll}
1-\frac{1-p}{p_n} & \textrm{if }p_n\geqslant 1-p,\\
0 & \textrm{otherwise;}
\end{array}\right. \\
s_{n+1}&=\left\{\begin{array}{ll}
\max\left\{1-2r_{n+1}+\frac{r_{n+1}-(1-p)}{p_{n+1}}, 0 \right\} & \textrm{if }p_{n+1}>0,\\
0 & \textrm{otherwise.}
\end{array}\right.
\end{align*}
\begin{lemma}\label{lemma: sequences work}The following hold for all $n\in \mathbb{N}$:
	\begin{enumerate}[(i)]
		\item $p_n, r_n\in [0,1]$,
		\item $s_n \in [0, 1-r_n]$,
		\item $p_{n+1}\leqslant p_n$,
		\item  $r_{n+1}\leqslant r_n$,
		\item $r_{n+1}+s_{n+1}\leqslant r_n+s_n$.
	\end{enumerate}		
\end{lemma}	
\begin{proof}
	We prove the lemma by induction on $n$. By definition of our sequences, $p_1=r_1=1\geqslant p=p_2=r_2$, $s_1=0$, and $0<s_2=\frac{(2p-1)(1-p)}{p}< 1-p=r_1+s_1-r_2$, and thus (i)--(v) all hold in the base case $n=1$. 

	Suppose now (i)--(v) hold for all $n \leqslant N$, for some $N \geqslant 1$. Since $p_N$ and $r_N+s_N$ both lie in $[0,1]$, the definition of $p_{N+1}$ and $r_{N+1}$ implies these also both lie in $[0,1]$. This establishes (i) for $n=N+1$. By construction, $s_{N+1}\geqslant 0$, and by the inductive hypotheses (ii) and (v), we have
	\begin{align*}
	s_{N+1}\leqslant r_{N}+s_N-r_{N+1}\leqslant 1-r_{N+1},
	\end{align*}
	whence (ii) holds for $n=N+1$.

	If $p_{N+2}=0$, then $p_{N+2}\leqslant p_{N+1}$ trivially holds (since $p_{N+1}\geqslant 0$ by (i)). On the other hand, suppose $p_{N+2}=1-\frac{1-p}{r_{N+1}+s_{N+1}}>0$. Then we have $r_{N+1}+s_{N+1}> 1-p$, which by our inductive hypothesis (v) implies $r_N+s_N\geqslant r_{N+1}+s_{N+1}> 1-p$. The definition of $p_{N+1}$ then implies
	\begin{align*}
	p_{N+2}=1-\frac{1-p}{r_{N+1}+s_{N+1}} \leqslant 1-\frac{1-p}{r_{N}+s_{N}}=p_{N+1},
	\end{align*}
	as desired, establishing that (iii) holds for $n=N+1$. Arguing in exactly the same way (using the inductive hypothesis (iii) instead of (v)), we obtain that $r_{N+2}\leqslant r_{N+1}$.  Hence (iv) holds for $n=N+1$.
	
	Finally we consider (v) for $n=N+1$, which is the most delicate part of the induction.  We begin by recording two useful facts, the second of which we shall reuse later.
		\begin{claim}\label{claim: if p_{n+2}=0 or r_{n+2}=0 then s_{n+2}=0}
			If $p_{N+2}=0$ or $r_{N+2}=0$, then $s_{N+2}=0$.
		\end{claim}	
		\begin{proof}
			If $p_{N+2}=0$, then by construction $s_{N+2}=0$ and so we are done. If $r_{N+2}=0$, then by construction $p_{N+1}\leq 1-p$, which by our inductive hypothesis (iii) implies $p_{N+2}\leq 1-p$ and hence $s_{N+2}= \max\left\{1-2r_{N+2}+ \frac{r_{N+2}-(1-p)}{P_{N+2}}, 0 \right\}=\max\left\{1- \frac{1-p}{p_{N+2}}, 0\right\}=0$.
		\end{proof}	
	\begin{claim}\label{claim: intermediate result --- s_i >0 for all smaller i}
		If $p_{N+2}$ and $r_{N+2}$ are both strictly positive, then for all $i\in \{2,\ldots,N+1\}$, we have  $s_i>0$.
	\end{claim} 
	\begin{proof}
		Fix $i\in \{2,\ldots,N+1\}$.  By our inductive hypotheses (iii)--(iv) (which we have already established up to $n=N+1$) and since $i\geqslant 2$, we have  $0 < p_{N+2}\leqslant p_i\leqslant p_2=p$ and $0 < r_{N+2} \leqslant r_i\leqslant r_2=p$. Since $r_{i+1}>0$, we in fact have $p_{i}>1-p$. We also have that
		\begin{align*}
		1-2r_i+\frac{r_i-(1-p)}{p_i}=\frac{1}{p_i}\left(p_i +r_i -2r_ip_i -(1-p)\right)=:\frac{1}{p_i} f(p_i, r_i).
		\end{align*}
		Now for fixed $y\in [1/2, 1]$, the function $x\mapsto f(x,y)$ is a non-increasing function of $x$. Thus if $r_i\geqslant 1/2$, we have
		\begin{align*}
		f(p_i, r_i)\geqslant f(p, r_i)= (2p-1)(1-r_i)>0.
		\end{align*}
		On the other hand for fixed $y\in (0,1/2)$, the function $x\mapsto f(x,y)$ is strictly increasing in $x$. Therefore if $r_i<1/2$, we have
		\begin{align*}
		f(p_i, r_i)> f(1-p, r_i)= r_i(2p-1)>0. 
		\end{align*}
		In either case, $f(p_i, r_i)>0$, and thus $s_i=\max\left(\frac{1}{p_i}f(p_i, r_i), 0 \right)>0$.
	\end{proof}
With these results in hand, we return to the proof of (v). If $s_{N+2}=0$,  then (v) follows immediately from (iv). Thus we may assume that $s_{N+2}>0$, whence by Claim~\ref{claim: if p_{n+2}=0 or r_{n+2}=0 then s_{n+2}=0} $p_{N+2}>0$ and $r_{N+2}>0$. By Claim~\ref{claim: intermediate result --- s_i >0 for all smaller i} and our inductive hypotheses (iii) and (iv), this implies that $p_i$, $r_i$ and $s_i$ are all strictly positive for $i\in \{2, 3 \ldots, N+2\}$.
By definition of our sequences we thus have for all $i \in [N+1]$ that 
	\begin{align}\label{eq: expressions for pi, ri, si}
	p_{i+1}=1-\frac{1-p}{r_{i}+s_{i}},&&r_{i+1}=1-\frac{1-p}{p_{i}},
	&&s_{i+1}&=1-2r_{i+1}+\frac{r_{i+1}-(1-p)}{p_{i+1}}.
	\end{align}
	Combining these equations we obtain for $i\in \{2,\ldots,N+1\}$ that:
	\begin{align}\label{eq: p_{i+1} as a function of p_i, p_{i-1}}
	p_{i+1}=1-\frac{p_{i}p_{i-1}(1-p)}{p(p_{i-1}-p_{i}+1)+p_{i}-1}.
	\end{align}
	\begin{claim}\label{claim: intermediate result --- p_{i+1} as function of p_i for all smaller i}
	Under our assumption that $s_{N+2}>0$, for all	integers $i\in [N+1]$ we have 
		\begin{align*}
	p_{i+1}=\frac{p_{i}-(1-p)}{(2-p)p_{i}-(1-p)}.
	\end{align*}
\end{claim} 
\begin{proof}
Since $p_1=1$ and $p_2=p$, our claim holds for $i=1$. Suppose it holds for some $i\leqslant N$. Then by rearranging terms, we have
\begin{align*}
p_{i}= \frac{(1-p)(1-p_{i+1})}{1- (2-p)p_{i+1}}.
\end{align*}
Substituting this into the formula for $p_{i+2}$ given by (\ref{eq: p_{i+1} as a function of p_i, p_{i-1}}), we see our claim holds for $i+1$ as well.
\end{proof}
It follows from Claim~\ref{claim: intermediate result --- p_{i+1} as function of p_i for all smaller i} and (\ref{eq: expressions for pi, ri, si}) that for all $i\in [N+1]$, we can write $r_{i+1}+s_{i+1}$ as a function $p_{i}$:

\begin{eqnarray}\label{eq: t_{i+1 formula}}
r_{i+1}+s_{i+1}&=& 1-r_{i+1}+\frac{r_{i+1}-(1-p)}{p_{i+1}} \nonumber \\
&=&\frac{p_{i}p(2-p)-(1-p)}{p_{i}-(1-p)}\nonumber \\
&=&p(2-p)-\frac{(1-p)^3}{p_i-(1-p)}.
\end{eqnarray}
For $p_{i}>(1-p)$ (which we recall holds since $r_{i+1}>0$), the expression above is an increasing function of $p_{i}$.  By our inductive hypothesis (iii) that $p_{N+1} \leqslant p_{N}$ it follows that $r_{N+2}+s_{N+2} \leqslant r_{N+1}+s_{N+1}$ and we have verified that (v) holds for $n=N+1$.
\end{proof}
\noindent Recall that $p= \frac{2}{3}-\varepsilon$, for some fixed $\varepsilon \in (0, \frac{1}{6})$. 
\begin{lemma}\label{lemma: ladder construction is 2-inaccessible if suff far}
	We have that $p_n=r_n=s_n=0$ for all $n\geqslant N_{\varepsilon}$, where $N_{\varepsilon}:= \lceil 2\varepsilon^{-1}\rceil$.
\end{lemma}	
\begin{proof}
Suppose first that there exists $m\in [N_{\varepsilon}-1]$ such that $r_{m}=0$.  Then $p_{m+1}=0$ and $s_{m+1}=0$ by construction and $r_{m+1}=0$ by Lemma~\ref{lemma: sequences work}(iv). Lemma~\ref{lemma: sequences work}(iii)-(v) then implies $p_{n}=r_{n}=s_{n}=0$ for all $n\in \mathbb{N}_{\geqslant m+1}$, as required.
	
Suppose instead that $r_{n} > 0$ for all $n \in [N_{\varepsilon}-1]$ and there exists some $m\in [N_{\varepsilon}-2]$ such that $p_{m}\leqslant 1-p$.  Then $r_{m+1}=0$, and thus by the argument above, we have that $p_{n}=r_{n}=s_{n}=0$ for all $n\in \mathbb{N}_{\geqslant m+2}$, as required.
	
Finally, suppose $p_n>1-p$ and $r_n>0$ both hold for all $n \in [N_{\varepsilon}-2]$. By 
Claim~\ref{claim: intermediate result --- s_i >0 for all smaller i},
 we have $s_n>0$ for all $n\in \{2,\ldots,N_{\varepsilon}-3\}$. This allows us in turn to apply Claim~\ref{claim: intermediate result --- p_{i+1} as function of p_i for all smaller i} to all $n$ in this interval and to deduce that
	\begin{eqnarray}\label{ineq: p_n -p_{n-1} less than -eps}
	p_{n-1}- p_{n}&=&p_{n-1}-\frac{p_{n-1}-(1-p)}{(2-p)p_{n-1}-(1-p)} \nonumber\\
	&=&\frac{1}{(2-p)p_{n-1}-(1-p)}\left((2-p) \left(p_{n-1}-\frac{1}{2}\right)^2+\frac{2-3p}{4} \right) \nonumber \\
	&\geqslant& \frac{3\varepsilon}{4}.
	\end{eqnarray}
Recall that $p_1=1$.  As such, it follows from inequality~(\ref{ineq: p_n -p_{n-1} less than -eps}) that $p_{n} \leqslant 1 - (n-1)\frac{3\varepsilon}{4}$ for all $n\in [N_{\varepsilon} -2 ]$.  In particular, as $N_{\varepsilon}=\lceil 2 \varepsilon^{-1}\rceil $ and $\varepsilon \in (0, \frac{1}{6})$, we have 
\[p_{N_{\varepsilon}-3}\leq 1- \left(\frac{2}{\varepsilon}-4\right)\frac{3\varepsilon}{4}=-\frac{1}{2}+3\varepsilon< \frac{1}{3}+\varepsilon=1-p,\]
which is a contradiction.
\end{proof}

Now let $N=N_{\varepsilon}$ be the integer constant whose existence is given by Lemma~\ref{lemma: ladder construction is 2-inaccessible if suff far} and construct the $1$-ipm $\mathbf{G}_\mu$ on the graph $G=P_{N}\times P_2$ from independent random assignments of states $S_{(n,y)}$ to  vertices $(n,y)$ in $V(G)=[N]\times [2]$, as described at the beginning of this section.

We observe here that  by Lemma~\ref{lemma: sequences work}(i)--(ii), the states $S_{(n,y)}$ are well-defined random variables for every $(n,y)\in [N]\times [2]$, and so $\mu$ is a well-defined $1$-ipm. We recall here for the reader's convenience the state-based rules governing which edges are open in $\mathbf{G}_{\mu}$:
	\begin{itemize}
		\item for each $n\in [N-1]$ and $y\in [2]$, the horizontal edge $\{(n,y),(n+1, y)\}$ is open if and only if $S_{(n,y)}\leqslant S_{(n+1, y)}$,
		\item for each $n\in [N]$, the vertical edge $\{(n,1), (n,2)\}$ is open if and only if either $S_{(n,1)}=S_{(n,2)}$ or one of $S_{(n,1)}, S_{(n,2)}$ is equal to $1$.
	\end{itemize}
So intuitively, the value of the $S_{(n,y)}$ must increase from left to right along open horizontal edges of $\mathbf{G}_{\mu}$, and it must stay constant along open vertical edges of $\mathbf{G}_{\mu}$ unless one of the endpoints is in the special state $1$ which allows free passage up or down. 
	\begin{claim}\label{claim: ladder measure has right edge probability}
		We have that $d(\mu)\geqslant p$.	
	\end{claim}
	\begin{proof}
		For $(n,y)\in [N-1]\times [2]$, consider the horizontal edge $\{(n,y), (n+1,y)\}$, . If $n+y$ is even, then by definition of $r_{n+1}$,
		\begin{align*}
		\mu\left(\{(n,y), (n+1,y)\}\in \mathbf{G}_\mu \right)&=\mu \left(S_{(n,y)}\leqslant S_{(n+1,y)}\right)=r_{n+1}+(1-r_{n+1})(1-p_n)\geqslant p.
		\end{align*} 	
		Similarly if $n+y$ is odd, then by definition of $p_{n+1}$,
		\begin{align*}
		\mu \left(\{(n,y), (n+1,y)\}\in \mathbf{G}_\mu \right)&=\mu \left(S_{(n,y)}\leqslant S_{(n+1,y)}\right)=p_{n+1}+(1-p_{n+1})(1-r_n-s_n)\geqslant p.
		\end{align*} 
		Finally, for a vertical edge $\{(n,1), (n,2)\}$, $n\in [N]$, we have
		\begin{align*}
		\mu \left(\{(n,1), (n,2)\}\in \mathbf{G}_\mu \right)&=\mu \left(S_{(n,1)}=S_{(n,2)} \textrm{ or } 1\in \{S_{(n,1)}, S_{(n,2)} \}\right)\\
		&=s_n + p_{n}r_{n}+ (1-p_{n})(1-r_{n}-s_{n}).
		\end{align*}
		Now, if $p_n=0$, then $r_{n-1}\leqslant 1-p$ by definition of $p_n$, whence $r_n\leqslant 1-p$ by Lemma~\ref{lemma: sequences work}(iv), and so the expression above equals $1-r_n \geqslant p$. On the other hand if $p_n\neq 0$, then by definition of $s_n$ the expression above is at least $p$. Thus each horizontal edge and each vertical edge is open in $\mathbf{G}_\mu$ with probability at least $p$, and $d(\mu)\geqslant p$ as claimed.
	\end{proof}
	\begin{claim}
		There is no open path in $\mathbf{G}_\mu$ from $\{1\}\times [2]$ to $\{N\}\times [2]$.
	\end{claim}
	\begin{proof}
		By construction, $p_1=r_1=1$, whence $S_{(1,1)}=S_{(1,2)}=2$. Furthermore, by Lemma~\ref{lemma: ladder construction is 2-inaccessible if suff far} and our choice of $N$, $p_N=r_N=s_N=0$, whence $S_{(N,1)}=S_{(N,2)}=0$.

		Let $N'$ be the largest $n\in [N]$ for which there exists an open path in $\mathbf{G}_\mu$ from $\{1\}\times [2]$ to $\{n\}\times [2]$. Let $\mathcal{P}$ be such a path, and let $v_0\in \{1\}\times [2]$, $v_1\in \{2\}\times[2]$, $v_2$, ... , $v_{\ell} \in \{n\}\times [2]$ be the vertices of $\mathcal{P}$ traversed from left to right. Observe that in this ordering of the vertices of $\mathcal{P}$, every horizontal edge $\{(n,y),(n+1,y)\}$ of $\mathcal{P}$ is traversed from left to right.

		We claim that for all $i\in [\ell]$, we have $S_i\in\{1,2\}$. Indeed, by construction $S_{v_0}=2$. Suppose there exists some $1\leqslant i<\ell$ such that $S_{v_j}\in\{1,2\}$ for all $j < i$. If $S_{v_i}=2$, then the edge $v_iv_{i+1}$ can be open in $\mathbf{G}_\mu$ only if $S_{v_{i+1}}\in \{1,2\}$. What is more, $S_{v_{i+1}}$ can be equal to $1$ if and only if $v_iv_{i+1}$ is a vertical edge. On the other hand, suppose $S_{v_i}=1$. Then $v_{i-1}v_i$ was a vertical edge (since there is no edge both of whose endpoints are in state $1$ and since horizontal edges are traversed from left to right by $\mathcal{P}$), and so $v_{i+1}=v_i+(1,0)$. But then $v_iv_{i+1}$ open in $\mathbf{G}_\mu$ implies $S_{v_{i+1}}=2$. Thus for every vertex $v_i$ of $\mathcal{P}$, we have that $S_{v_i}$ is indeed in state $1$ or $2$.

		This implies in particular that $v_{\ell}\notin \{N\}\times [2]$ (since as we remarked above $S_{(N,1)}=S_{(N,2)}=0$). Thus there is no open path in $\mathbf{G}_\mu$ from $\{1\}\times [2]$ to $[N]\times [2]$.
	\end{proof}
	Thus $\mu$ is an element of $\mathcal{M}_{1,\geqslant p}(P_N\times [2])$ for which
	\[\mu(\exists\textrm{ open left-right crossing})=0.\]
	Given $n\geqslant N$, we may extend $\mu$ to an element $\mu'\in \mathcal{M}_{1,\geqslant p}(P_n\times P_2)$ by letting every edge in $P_n\times P_2\setminus P_N\times P_2$ be open independently at random with probability $p$. In this way we obtain a $1$-independent bond percolation measure $\mu'$ on $P_n\times P_2$ with edge-probability $p$ for which there almost surely are no open left--right crossings of $P_n\times P_2$, giving the required lower bound on $p_{1, \times}(P_n\times P_2)$.
\end{proof}	

We conclude this section by proving  Theorem~\ref{theorem: long paths critical prob}(ii), with the aid of Theorem~\ref{theorem: ladder construction}.

\begin{proof}[Proof of Theorem~\ref{theorem: long paths critical prob}(ii)]
	Trivially, the $1$-independent connectivity function of the path on $2$ vertices $P_2$ (i.e. the graph consisting of a single edge) is $f_{1,P_2}(p)=p$. Thus the constant $p_{\star}(P_2)$ defined by equation~(\ref{equation:  pstar}) is the unique solution in $[0,1]$ to the equation $x^2=4(1-x)^2$, namely $p_{\star}(P_2)=\frac{2}{3}$. By Corollary~\ref{corollary: bound on long path constant in products with the line}, this implies $p_{1, \ell \mathit{p}}(\mathbb{Z}\times P_2)\leqslant \frac{2}{3}$.

	For the lower bound, fix $p \in (\frac{1}{2},\frac{2}{3})$. In the proof of Theorem~\ref{theorem: ladder construction}, we showed there exist some integer $N\in \mathbb{N}$ and $\mu \in \mathcal{M}_{1,\geqslant p}(P_N\times P_2)$ such that 
	\begin{enumerate}[(i)]
		\item $\mu(\exists\textrm{ open left-right crossing})=0$;
		\item $\mu\left(\{(1,1), (1,2)\} \textrm{ and } \{(N,1), (N,2)\} \textrm{ are open}\right)=1$.
	\end{enumerate}	 
	We use this measure to create a measure $\nu \in \mathcal{M}_{1,\geqslant p}(\mathbb{Z} \times P_2)$. Let $G:=\mathbb{Z} \times P_2$. For each $i\in \mathbb{Z}$, we let the subgraphs $\mathbf{G}_{\nu}\left[ (i(N-1)+[N])\times [2]\right]$ on horizontal shifts of the ladder $P_N\times P_2$ by $i(N-1)$ be independent identically distributed random variables with distribution given by $\mu$. Thanks to property (ii) recorded above, the random subgraphs agree on the vertical rungs $\{1+i(N-1)\}\times P_2$ of the ladder, and this gives rise to a bona fide $1$-independent model $\nu$ on $\mathbb{Z} \times P_2$ with edge-probability $p$. Furthermore, property (i) implies all connected components in $\mathbf{G}_{\nu}$ have size at most $4(N-1)-2=4N-6$. In particular, $p_{1, \ell \mathit{p}}(\mathbb{Z}\times P_2)\geqslant p$. Since $p<\frac{2}{3}$ was chosen arbitrarily, this gives the required lower bound $p_{1, \ell \mathit{p}}(\mathbb{Z}\times P_2)\geqslant \frac{2}{3}$.
\end{proof}

\section{Complete graphs}\label{section: complete graphs}
In this section we will prove Theorem \ref{theorem: connected function Kn}.    Recall that, given $ n \in \mathbb{N}_{\geqslant 2}$ and $p \in [0,1]$, we let $\theta = \theta(p) := \frac{1 + \sqrt{2p - 1}}{2}$ and $p_{n} := \frac{1}{2}(1 - \tan^{2}(\frac{\pi}{2n}))$.  Let $g_{n}(\theta) := \theta^{n} + (1-\theta)^{n}$.

\subsection{An upper bound for $f_{1, K_n}(p)$}
Before proving Theorem \ref{theorem: connected function Kn}, let us give a simple vertex-based construction of a measure $\nu_{p} \in \mathcal{M}_{1,\geqslant p}(K_{n})$ that shows $f_{1,K_{n}}(p) \leqslant g_{n}(\theta)$ for $p \geqslant \frac{1}{2}$.  We call this measure the \textit{Red-Blue construction}.  We think of $K_{n}$ as the complete graph on vertex set $[n]$, and we colour each vertex Red with probability $\theta$ and colour it Blue otherwise, and we do this independently for all vertices.  The edge $\{i,j\} \in [n]^{(2)}$ is open if and only if $i$ and $j$ have the same colour.  As $p = \theta^{2} + (1- \theta)^{2}$, we have that each edge is present in $(\mathbf{K}_{n})_{\nu_{p}}$ with probability $p$.  Note that $(\mathbf{K}_{n})_{\nu_{p}}$ will either be either a disjoint union of two cliques, in which case it is disconnected, or the complete graph $K_{n}$, in which case it is connected.  This latter case occurs if and only if every vertex receives the same colour, and so the probability that $(\mathbf{K}_{n})_{\nu_{p}}$ is connected is equal to $g_{n}(\theta)$.  As this construction is vertex-based, it is clear that it is $1$-independent.

If $p < \frac{1}{2}$ then $\theta$ is a complex number, and so the Red-Blue construction is no longer valid.  However, as discussed in Section \ref{section: preliminary lemma}, we will show that it is possible to extend this construction to all $p \in [p_{n},1]$.  Given $j \in \{0,1,\ldots,n\}$, let 
\begin{equation}
g_{n,j}(\theta) := \theta^{j}(1-\theta)^{n-j} + \theta^{n-j}(1-\theta)^{j}. \nonumber
\end{equation}
When $j = 0$ or $j = n$ we have that $g_{n,0}(\theta)$ and $g_{n,n}(\theta)$ are each equal to $g_{n}(\theta)$,  and so we just write the latter instead.  Given some $A \subseteq [n]$, let $H_{A}$ be the disjoint union of a clique on $A$ with a clique on $[n] \setminus A$.  Note that when $A = \emptyset$ or $[n]$ we have that $H_{A}$ is equal to $K_{[n]}$, and more generally that $H_{A} = H_{[n]\setminus A}$. For $p \in [0,1]$, let $\mu_{p}$ be the following function on subgraphs $G$ of $K_{n}$:
\begin{equation}
\mu_{p}\left( G \right) := \begin{cases} g_{n,|A|}\left( \theta\left(p \right)\right) &\text{ if } G = H_{A} \text{ for some } A \subseteq [n] , \\
0 &\text{ else}. \end{cases} \nonumber
\end{equation}
For $p \in [\frac{1}{2},1]$ this function matches the Red-Blue construction given above, and so by defining $\nu_{p}((\mathbf{K}_n)_{\nu_p}=G):=\mu_p(G)$ for all subgraphs $G \subseteq K_n$, we obtain a measure $\nu_p$ which is a $1$-ipm defined without making reference to states of vertices.  The following claim, together with Lemma \ref{lemma: preliminary lemma}, shows that in fact $\nu_{p}$ is a $1$-ipm on $K_{n}$ for all $p \in [p_{n},1]$. 
\begin{claim}\label{claim: Kn book keeping}
For all $p \in [p_{n},\frac{1}{2}]$ and all $j \in \{0,\ldots,n\}$ we have that $g_{n,j}\left(\theta\left(p\right)\right)$ is non-negative real number.
\end{claim}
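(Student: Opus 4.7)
The plan is to observe that for $p\in [p_n, 1/2)$, the quantity $\theta=\theta(p)$ is the complex number $\frac{1}{2}+\frac{i}{2}\sqrt{1-2p}$, while $1-\theta$ is its complex conjugate. Writing $\theta$ in polar form $\theta=re^{i\phi}$, a direct computation gives
\[
r=\sqrt{\theta(1-\theta)}=\sqrt{\tfrac{1-p}{2}},\qquad \phi=\arctan\bigl(\sqrt{1-2p}\bigr),
\]
so that $1-\theta=re^{-i\phi}$.

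Next I would use this polar decomposition to rewrite $g_{n,j}$. Since
\[
\theta^{j}(1-\theta)^{n-j}=r^{n}e^{i(2j-n)\phi},\qquad \theta^{n-j}(1-\theta)^{j}=r^{n}e^{-i(2j-n)\phi},
\]
the two summands defining $g_{n,j}(\theta)$ are complex conjugates. Adding them yields
\[
g_{n,j}\bigl(\theta(p)\bigr)=2r^{n}\cos\bigl((2j-n)\phi\bigr),
\]
which in particular shows that $g_{n,j}(\theta(p))$ is a real number for every $p\in[p_n,1/2)$ and every $j\in\{0,1,\ldots,n\}$. At $p=1/2$ we have $\theta=1/2$ and $g_{n,j}(\theta)=2^{1-n}>0$, so it remains to deal with the strict inequality $p<1/2$.

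For non-negativity, the factor $2r^n$ is positive, so I only need the cosine factor to be non-negative. Since $|2j-n|\leqslant n$ for all $j\in\{0,\ldots,n\}$, it suffices to verify that $n\phi\leqslant \pi/2$. But $\phi=\arctan(\sqrt{1-2p})$, so
\[
n\phi\leqslant \tfrac{\pi}{2}\iff \sqrt{1-2p}\leqslant \tan\!\left(\tfrac{\pi}{2n}\right)\iff p\geqslant \tfrac{1}{2}\bigl(1-\tan^{2}(\tfrac{\pi}{2n})\bigr)=p_n,
\]
which is precisely the hypothesis. This yields $g_{n,j}(\theta(p))\geqslant 0$, proving the claim. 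I do not expect any genuine obstacle: the argument is an exact analogue (for the complete graph) of the computation carried out in the proof of Claim~\ref{claim: Pn book keeping} for paths, where the role of $(n+1)\phi$ there is played here by $n\phi$, reflecting the fact that for $K_n$ the connected configurations come from the two extremes $j=0$ and $j=n$ rather than from a spread across $j\in\{0,\ldots,n\}$. The only subtlety is checking that the extremal value of $|2j-n|$ is $n$ (attained at $j=0,n$), so that the critical threshold $p_n$ is determined by the boundary case $g_{n}(\theta)=g_{n,0}(\theta)=g_{n,n}(\theta)$, consistent with the definition of $p_n$ as the zero of $g_n(\theta(\cdot))$ on $[0,1/2]$.
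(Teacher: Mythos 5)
Your proof is correct. The key formula $g_{n,j}(\theta(p)) = 2r^n\cos\bigl((2j-n)\phi\bigr)$ with $r=\sqrt{(1-p)/2}$, $\phi=\arctan(\sqrt{1-2p})$ is valid, and the reduction to the inequality $n\phi\leqslant\pi/2$, which unwinds exactly to $p\geqslant p_n$, goes through without difficulty. Your route differs slightly from the paper's: the paper proves non-negativity only for $j=n$ (equivalently $j=0$), obtaining $g_n(\theta)=2r^n\cos(n\phi)$, and then handles general $j$ by the factorisation
\begin{equation*}
g_{n,j}\bigl(\theta(p)\bigr)=\Bigl(\tfrac{1-p}{2}\Bigr)^{\min(j,n-j)}\,g_{|n-2j|}\bigl(\theta(p)\bigr),
\end{equation*}
invoking at that point the monotonicity of the sequence $(p_n)_{n\geqslant 2}$ to guarantee $p\geqslant p_{|n-2j|}$. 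Your version treats all $j$ uniformly via the cosine and the elementary bound $|2j-n|\leqslant n$, which removes the intermediate factorisation step and the appeal to monotonicity of $(p_n)$; both arguments have the same underlying polar-form computation at their core, but yours is a touch more streamlined. The one thing worth making explicit if you write this up is that for $p\in(p_n,1/2)$ you have $0<\phi<\pi/(2n)$ strictly, and the endpoint case $p=p_n$ gives $\cos(n\phi)=0$ only for $j\in\{0,n\}$, consistent with $g_n(\theta(p_n))=0$ being the defining property of $p_n$; you touch on this at the end, and it is fine as stated.
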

\begin{proof}
Let us begin with the case $j = n$. As $p \leqslant \frac{1}{2}$, we have that $\theta$ and $1-\theta$ are complex conjugates, and so $g_{n}\big(\theta(p)\big)$ is a real number for all $p$ in this range.  By writing $\theta = re^{i \phi}$, where $r := \sqrt{\frac{1-p}{2}}$ and $\phi := \arctan \left(\sqrt{1-2p}\right)$, we can write
\begin{equation}\label{eq: Kn g_n formula}
g_{n}\left(\theta\left(p\right)\right) = 2 r^{n} \cos \left(n \phi \right).  
\end{equation}
Now $p\in [p_n, \frac{1}{2}]$ implies $0\leq \phi \leq \frac{\pi}{2n}$, which in turn gives $\cos(n\pi)\geq 0$. By~\eqref{eq: Kn g_n formula}, it follows that 
$g_{n}\left(\theta\left(p\right)\right)$ is a non-negative real number for all $p \in [p_{n},\frac{1}{2}]$, which proves the claim when $j = n$.
For general $j \in \{0,\ldots,n\}$, we have that
\begin{equation}\label{equation: Kn book keeping}
g_{n,j}\left(\theta\left(p \right)\right) = \begin{cases}
\left(\frac{1-p}{2}\right)^{j} g_{n-2j} \left(\theta \left(p \right)\right)&\text{ if }  n \geqslant 2j,\\
\left(\frac{1-p}{2}\right)^{n-j} g_{2j-n} \left(\theta \left(p \right)\right)&\text{ if } n \leqslant 2j. \end{cases} \nonumber
\end{equation}
Therefore the previous case of the claim shows that $g_{n,j}\left(\theta\left(p \right)\right) \in [0,1]$ for all $p \in [p_{n},\frac{1}{2}]$; at this stage we are using the fact that $(p_{n})_{n \geqslant 2}$ forms an increasing sequence, and so $p \geqslant p_{n}$ implies that $p \geqslant p_{s}$ for all $s \leqslant n$.
\end{proof}
Note that as this proof shows that $g_{n}\left(\theta \left(p_{n}\right)\right) = 0$, we have that the probability $(\mathbf{K}_{n})_{\nu_{p_{n}}}$ is connected is equal to $0$.  As $\nu_{p_{n}} \in \mathcal{M}_{1,\geqslant p}(K_{n})$ for all $p \leqslant p_{n}$, we have that $f_{1,K_{n}}(p) = 0$ for all $p \leqslant p_{n}$.  We now prove that this construction is optimal with respect to the connectivity function.

\subsection{A lower bound on $f_{1,K_n}(p)$}
\begin{proof}[Proof of Theorem \ref{theorem: connected function Kn}]
The previous constructions discussed show that
\begin{equation}
f_{1,K_{n}}(p) \leqslant \begin{cases} g_{n}(\theta) &\text{ for } p \in [p_{n},1] , \\
							  0 &\text{ for } p \in [0,p_{n}]. \end{cases} \nonumber
\end{equation}
It is clear that $f_{1,K_{n}}(p) \geqslant 0$ for all $p$, and so all that remains to show is that $f_{1,K_{n}}(p) \geqslant g_{n}(\theta)$ for $p \in [p_{n},1]$.  We will prove this result by induction on $n$.  The inequality is trivially true when $n = 2$, so let us assume that $n > 2$ and that the inequality is true for all cases from $2$ up to $n-1$.  First, we note that $g_{n}(\theta) = g_{j}(\theta)g_{n-j}(\theta) - g_{n,j}(\theta)$ for all $j \in \{0,1,\ldots,n\}$.  Thus, if we multiply both sides of this equation by $\binom{n}{j}$ and sum over all $j \in \{0,1,\ldots,n\}$, we have that
\begin{equation}\label{equation: f-equation}
2^{n}g_{n}(\theta) = \Bigg( \sum_{j = 0}^{n}\binom{n}{j}g_{j}(\theta)g_{n-j}(\theta)\Bigg) - 2,
\end{equation}
Let $\mu \in \mathcal{M}_{1,\geqslant p}(K_{n})$ and let $C$ be the event that $(\mathbf{K}_n)_{\mu}$ is connected.  Given $A \subseteq [n]$, let $X_{A}$ be the event that $(\mathbf{K}_n)_{\mu}[A]$ and $(\mathbf{K}_n)_{\mu}[A^{c}]$ are each connected, where $A^{c} = 	[n] \setminus A$.  Moreover, let $Y_{A}$ be the event that  $(\mathbf{K}_n)_{\mu}[A]$ and $(\mathbf{K}_n)_{\mu}[A^{c}]$ are each connected, and there are no edges between $A$ and $A^{c}$ in $(\mathbf{K}_n)_{\mu}$.  For all $A \subseteq [n]$, we have that
\begin{equation}\label{inequality: f-inequality2}
\mu(C) \geqslant \mu(X_{A}) - \mu(Y_{A}).
\end{equation}
Note that when $A = \emptyset$ or $A = [n]$, the above equation is trivially true due to the fact that $C, X_{\emptyset},X_{[n]},Y_{\emptyset}$ and $Y_{[n]}$ are all the same event.  As $\mu$ is $1$-independent we have that if $A$ is a non-empty proper subset of $[n]$, then, by induction on $n$, we have
\begin{equation}\label{inequality: f-inequality1}
\mu(X_{A}) \geqslant g_{|A|}(\theta)g_{n - |A|}(\theta).
\end{equation}
Note that here we are using the fact that $(p_{n})_{n \geqslant 2}$ forms an increasing sequence, and so $p \geqslant p_{n}$ implies that $p \geqslant p_{s}$ for all $s \leqslant n$.  We are also using the fact that $g_{1}(\theta) = 1$ for all $\theta \in [0,1]$.  We proceed by summing (\ref{inequality: f-inequality2}) over all non-empty proper subsets of $[n]$, and then applying (\ref{inequality: f-inequality1}) to obtain
\begin{eqnarray}\label{inequality: f-inequality3}
(2^{n} - 2)\mu(C) \geqslant \Bigg(\sum_{A \subseteq [n]} g_{|A|}(\theta)g_{n-|A|}(\theta)\Bigg) - 2g_{0}(\theta)g_{n}(\theta) \nonumber \\ 
-  \Bigg(\sum_{A \subseteq [n]} \mu(Y_{A})\Bigg) + \mu(Y_{\emptyset})+ \mu(Y_{[n]}).
\end{eqnarray}
We apply (\ref{equation: f-equation}) and the fact that the events $C,Y_{\emptyset}$ and $Y_{[n]}$ are all the same event to (\ref{inequality: f-inequality3}) to get
\begin{eqnarray}\label{inequality: f-inequality4}
(2^{n} - 4)\mu(C) \geqslant (2^{n}-4)g_{n}(\theta) +2 -  \Bigg(\sum_{A \subseteq [n]} \mu(Y_{A})\Bigg).
\end{eqnarray}
Note that for all $A \subseteq [n]$, the events $Y_{A}$ and $Y_{A^{c}}$ are the same event, and so $\sum_{A \subseteq [n]}   \mu(Y_{A}) = 2\sum_{1 \in A \subseteq [n]}   \mu(Y_{A})$.  Moreover, the set $\{ Y_{A} : 1 \in A \subseteq [n]\}$ consists of pairwise disjoint events, and so $\sum_{1 \in A \subseteq [n]}   \mu(Y_{A}) \leqslant 1$.  Thus 
\begin{equation}\label{inequality: Y-inequality}
\sum_{A \subseteq [n]}\mu(Y_{A}) \leqslant 2.
\end{equation}

We apply (\ref{inequality: Y-inequality}) to (\ref{inequality: f-inequality4}) to obtain $(2^{n}-4)\mu(C) \geqslant (2^{n}-4)g_{n}(\theta)$.  As $n >2$, we have that $\mu(C) \geqslant  g_{n}(\theta)$ and so we are done.
\end{proof}

\subsection{A remark on $f_{k, K_n}(p)$ for $k\geqslant 2$}
Clearly we can define $f_{k,G}(p)$ analogously to $f_{1,G}(p)$ for $k \in \mathbb{N}_0$. For $k=0$, $f_{0, K_n}(p)$ is exactly the probability that an instance of the Erd{\H o}s--R\'enyi random graph $\mathbf{G}_{n,p}$ contains a spanning tree. As far as we know, there is no nice closed form expression for this function.

In this section, we have computed $f_{1, K_n}(p)$ exactly, which is the other interesting case, as for $k\geqslant 2$ the connectivity problem is trivial.
\begin{proposition}\label{prop: f_{k, K_n}(p) for k at least 2}
For all $k,n \in \mathbb{N}_{\geqslant 2}$, we have that 
\[f_{k, K_n}(p)=\begin{cases}
0 & \textrm{if }p\leqslant 1 -\frac{2}{n},\\
 1-\frac{n(1-p)}{2}& \textrm{otherwise.}
\end{cases}\]
\end{proposition}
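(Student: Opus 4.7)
The plan is to first note that for $k \geqslant 2$, the $k$-independence hypothesis on measures supported on $K_n$ is essentially vacuous. Indeed, any two distinct edges of $K_n$ are at graph-edge-distance at most $1$: edges sharing a vertex are at distance $0$, while vertex-disjoint edges $\{u,v\}$ and $\{x,y\}$ are linked by the single edge $ux \in E(K_n)$. Hence no pair of distinct edges is $k$-distant for any $k \geqslant 2$, and the $k$-independence condition places no restriction on the measures in $\mathcal{M}_{k,\geqslant p}(K_n)$. In particular, $\mathcal{M}_{k,\geqslant p}(K_n)$ coincides with the collection of \emph{all} probability measures on spanning subgraphs of $K_n$ in which each edge is open with probability at least $p$.

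With the independence hypothesis out of the way, the upper bound reduces to an explicit construction. I would pick a vertex $X$ uniformly at random from $[n]$ and, with some probability $q \in [0,1]$, close every edge incident to $X$ while opening every other edge; with the remaining probability $1-q$, open every edge of $K_n$. Each edge of $K_n$ is then open with probability exactly $1 - 2q/n$, and the resulting $\mu$-random graph is disconnected with probability exactly $q$. Choosing $q = 1$ when $p \leqslant 1-2/n$ gives a measure $\mu \in \mathcal{M}_{k,\geqslant p}(K_n)$ under which $(\mathbf{K}_n)_\mu$ is almost surely disconnected; choosing $q = n(1-p)/2 \in (0,1]$ when $p > 1-2/n$ produces a measure matching the claimed upper bound $1 - n(1-p)/2$.

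For the matching lower bound I would use a first-moment argument based on the minimum cut. The minimum edge cut of $K_n$ separating two non-empty complementary vertex sets of sizes $s$ and $n-s$ has $s(n-s) \geqslant n-1$ edges, with equality when $s = 1$, so any disconnected spanning subgraph of $K_n$ contains at least $n-1$ closed edges. Writing $Z$ for the number of closed edges in $(\mathbf{K}_n)_\mu$ and $D$ for the event that it is disconnected, this gives $Z \geqslant (n-1)\mathbf{1}_D$ pointwise, and taking expectations under any $\mu \in \mathcal{M}_{k,\geqslant p}(K_n)$ yields
\[(n-1)\,\mu(D) \leqslant \mathbb{E}_\mu[Z] = \sum_{e \in E(K_n)} \bigl(1 - \mu(e\text{ is open})\bigr) \leqslant \binom{n}{2}(1-p),\]
so $\mu(D) \leqslant n(1-p)/2$ and therefore $\mu\bigl((\mathbf{K}_n)_\mu \text{ is connected}\bigr) \geqslant \max\{0,\, 1 - n(1-p)/2\}$. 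This matches the constructions above on both sides of the threshold $p = 1-2/n$.

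The main (and essentially only) conceptual step is the opening observation that $k$-independence degenerates on $K_n$ for $k \geqslant 2$; once this is in place, the proposition follows from an isolated-vertex construction and a one-line expectation computation based on the fact that the minimum cut in $K_n$ has exactly $n-1$ edges. I do not anticipate any genuine obstacle.
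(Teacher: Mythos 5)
Your proof is correct and takes essentially the same approach as the paper: the same isolated-vertex construction for the upper bound and the same first-moment (Markov) bound via ``a disconnected subgraph of $K_n$ has at least $n-1$ closed edges'' for the lower bound. The one small improvement is that you explicitly note $k$-independence is vacuous on $K_n$ for $k\geqslant 2$, whereas the paper merely asserts that its construction is $2$-independent as ``easy to check.''
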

\begin{proof}
For the lower bound, consider $\mu\in \mathcal{M}_{k,\geqslant p}(K_n)$. Since any subgraph of $K_n$ with at least $\binom{n}{2}-(n-1)$ edges is connected, we can apply Markov's inequality to show that
\begin{align*}
1-\mu(\{\textrm{connected}\})&\leqslant \mu (\{\exists \ \geqslant(n-1) \textrm{ closed edges}\})
\leqslant \frac{1}{n-1}\mathbb{E}_{\mu} \{\# \textrm{ closed edges}\}=\frac{n(1-p)}{2}.
\end{align*}
\noindent For the upper bound, consider the random graph $\mathbf{G}$ obtained as follows. Let $x:=\frac{1-p}{2}$. With probability $\min(nx, 1)$, select a vertex $i\in [n]=V(K_n)$ uniformly at random, and let $\mathbf{G}$ be the subgraph of $K_n$ obtained by removing all edges incident with $i$. Otherwise, let $\mathbf{G}$ be the complete graph $K_n$.  It is easy to check that $\mathbf{G}$ is a $2$-independent model with edge-probability $p$ and that $\mathbf{G}$ is connected if and only if $\mathbf{G}=K_n$, an event which occurs with probability $1-\min(1,nx)=\max\left(0, 1-n(1-p)/2\right)$.
\end{proof} 

\section{Cycles}\label{section: cycles}
\subsection{Linear programming for calculating $f_{1,G}(p)$}\label{subsection: linear programming}
In this subsection we describe how we can represent the problem of finding $f_{1,G}(p)$, for any graph $G$, as a (possibly non-linear) programme.

Given a graph $G$ on vertex set $[n]$, let $\mathcal{H}=\mathcal{H}(G)$ be the set of all labelled subgraphs of $G$.  Throughout this section we treat these subgraphs as subsets of $E(G)$, and always imagine them to be on the full vertex set $[n]$.  For each labelled subgraph of $G$ we write 
\begin{align*}
\mu (S) := \mu(S \subseteq \mathbf{G}_{\mu}) && \text{and} && \mu (\hat{S}) :=  \mu(\mathbf{G}_{\mu} = S). 
\end{align*}
Recall that for a function $\mu:\mathcal{H}\rightarrow \mathbb{R}_{\geqslant 0}$, we have $\mu \in \mathcal{M}_{1,\geqslant p}(G)$ if and only if the following three conditions all hold:
\begin{enumerate}
\item{$\mu$ is a probability measure on labelled subgraphs of $G$,}
\item{Every edge of $G$ is open in $\mathbf{G}_{\mu}$ with probability at least $p$,}
\item{Given non-empty $S,T \in \mathcal{H}$ such that $S$ and $T$ are supported on disjoint subsets of $[n]$,  $\mu(S) \cdot \mu(T)= \mu(S \cup T)$.}
\end{enumerate}
As we are interested in determining $f_{1,G}(p)$, and as randomly deleting edges cannot increase the probability of being connected,  we may assume that in fact every edge of $G$ is open in $\mathbf{G}_{\mu}$ with probability exactly $p$ (by applying random sparsification as in Remark~\ref{remark: random sparsification} if necessary).We can thus rewrite the conditions above in the following way:
\begin{enumerate}
\item{ $\sum_{H \in \mathcal{H}} \mu(\hat{H}) = 1$},
\item{ For all edges $e \in E(G)$, we have that$\sum_{H \in \mathcal{H}} \mathbbm{1}(e\in H)\mu(\hat{H}) = p$},
\item For all non-empty $S,T \in \mathcal{H}$, such that $S$ and $T$ are supported on disjoint subsets of $[n]$, we have that
\begin{equation}\label{equation: third bullet product}
\sum_{H \in \mathcal{H}} \mu(\hat{H}) \Big( \mathbbm{1}\big( (S \cup T) \subseteq H \big)  - \mathbbm{1}\big(S \subseteq H\big)  \mu(T) \Big)  = 0.
\end{equation}
\end{enumerate}
Let $A=A(G)$ be a matrix which has columns indexed by $\mathcal{H}$, and a row for each piece of information given by one of the above conditions. That is:
\begin{enumerate}
\item We have a row for the empty set such that $A_{\emptyset,H}:= 1$.
\item{We have a row for $e \in G$; the entry $A_{e,H}:=\mathbbm{1}(e \in H)$;}
\item{We have a row for each pair $S,T \in \mathcal{H}\setminus \{\emptyset\}$ supported on disjoint subsets of $[n]$; the entry $A_{\{S,T\},H}:=\mathbbm{1}((S \cup T) \subseteq H) - \mu(T) \cdot \mathbbm{1}(S \subseteq H)$.}
\end{enumerate}
Let $\mathbf{q}=\mathbf{q}(G)$ be a vector with indexing the same as the rows of $A$; let $q_\emptyset:=1$, $q_e:=p$ for $e \in G$, and $q_{\{S,T\}}:=0$ for each pair $S,T \in \mathcal{H} \setminus \{\emptyset\}$ supported on disjoint subsets of $[n]$.  Then a vector $\mathbf{w}$, whose entries are indexed by $\mathcal{H}$, which satisfies $w_{H} \geqslant 0$ for all $H \in \mathcal{H}$, and also $A \mathbf{w} = \mathbf{q}$
corresponds precisely to a measure $\mu \in \mathcal{M}_{1,\geqslant p}(G)$. 

Let $\mathbf{c}$ be a vector indexed by $\mathcal{H}$ defined by $c_{H} := \mathbbm{1}(H \text{ is connected})$.  Just to make it clear, we say that $H \in \mathcal{H}$ is connected if it contains a spanning tree of $[n]$. Then for a given value of $p$ the vector $\mathbf{w}(p)$ satisfying $A \mathbf{w}(p) = \mathbf{q}$ corresponds to a measure $\mu \in \mathcal{M}_{1,\geqslant p}(G)$ such that $\mu (H \text{ is connected}) = f_{1,G}(p)$.

Observe that for any graph with five vertices or fewer, any partition of the graph into two parts has that one part must have at most two vertices in it.  In particular, if $G$ is a graph on $[5]$, and $S$ and $T$ are non-empty subgraphs of $G$ supported on disjoint subsets of $[5]$, then one of $S$ and $T$ must consist of precisely one edge of $G$.  By choosing $T$ to be this subgraph, we can always choose $S$ and $T$ for (\ref{equation: third bullet product}) so that $\mu(T)=p$. Thus for any choice of $p$, we can turn the problem of finding $f_{1,G}(p)$ into the following linear programme:
\begin{equation}\label{equation: lp minimum}
a^{*}=\min_{\mathbf{w}}  \mathbf{c}^{T}\mathbf{w} \quad \text{subject to } A\mathbf{w}=\mathbf{q}, \mathbf{w} \geqslant 0.
\end{equation}
(Note that for graphs with six or more vertices, one may find $S$ and $T$ such that $\mu(T)$ (in (\ref{equation: third bullet product})) is an unknown function of $p$, and thus the programme is not linear; for example, this indeed is the case for $C_6$.) 

The duality theorem states that the asymmetric dual problem has the same optimal solution $a^{*}$:
\begin{equation}\label{equation: lp maximum}
a^{*}=\max_{\mathbf{x}}  \mathbf{q}^{T}\mathbf{x} \quad \text{subject to } A^{T}\mathbf{x} \leqslant \mathbf{c}.
\end{equation}
One can easily solve the linear programmes above for a specific value of $p$, for example using the software Maple, and the \textit{LPSolve} function it contains.  However we of course wish to find solutions for all values of $p \in [0,1]$.

By writing $A=(a_{ij})$, $\mathbf{w}=(w_j)$, $\mathbf{c}=(c_j)$, $\mathbf{q}=(q_i)$ and $\mathbf{x}=(x_i)$ any solutions $\mathbf{w}$ and $\mathbf{x}$ must satisfy $\sum_j a_{ij} w_j = q_i$, $\sum_i a_{ij} x_i \leqslant c_j$ and $w_i \geqslant 0$. Thus we have
$$\sum_{i} q_i x_i = \sum_i \left( \sum_j a_{ij} w_j \right) x_i = \sum_j \left( \sum_i a_{ij} x_i \right) w_j \leqslant \sum_j c_j w_j.$$
In particular for optimal solutions we have $\sum_{i} q_i x_i = \sum_{j} c_j w_j$ and so the inequality must be an equality, that is
$$ \left(\sum_i a_{ij} x_i \right) w_j = c_j w_j, \quad \text{ for all $j$.}$$
Consequently for each $j$ we either have $w_j=0$ or $\sum_i a_{ij} x_i=c_j$. Thus in our attempt to obtain a function for all $p$, it seems reasonable to look at an optimal solution for one value of $p$ and see which $w_j$ have been set to zero; assume for these indices that we always have $w_j=0$ and attempt to directly solve the equations that result from this. This motivates the following method:
\begin{itemize}
\item{Solve (\ref{equation: lp minimum}) with a specific value of $p$ to obtain a solution $\mathbf{w}(p)$ and a set $J:=\{ j \in [|\mathbf{w}|] : w_j(p)=0\}$.}
\item{Solve the set of equations $\{(A\mathbf{w})_i=q_i,w_j=0 : i \in [|\mathbf{w}|],j \in J\}$ to obtain functions of $p$ for all $w_k$, $k \in [|\mathbf{w}|]$, which we write as $w'_k(p)$.}
\item{Solve the set of equations $\{(A^T\mathbf{x})_i=c_i: i \in [|\mathbf{w}|] \setminus J\}$ to obtain functions of $p$ for all $x_k$,  $k \in [|\mathbf{w}|]$, which we write as $x'_k(p)$.}
\item{Write $w^*(p):= \mathbf{c}^{T}\mathbf{w'}(p)$ and $x^*(p):=\mathbf{q}^T\mathbf{x'}(p)$.}
\item{For a certain interval $P \subseteq [0,1]$ of values of $p$, check that $(A^T\mathbf{x}')_i(p) \leqslant c_i$ and $w'_i(p) \geqslant 0$, for all $i \in [|\mathbf{w}|]$.}
\end{itemize}
For the given interval $P$ which works above, the conditions above ensure that the $\mathbf{w}'(p)$ and $\mathbf{x}'(p)$ obtained in this way are feasible solutions to (\ref{equation: lp minimum}) and (\ref{equation: lp maximum}) respectively. Thus if $w^*(p)=x^*(p)$, then by the duality theorem we have $f_{1,G}(p)=w^*(p)$. Furthermore, a measure $\mu$ on the subgraphs of $G$ which is extremal is given directly by $\mathbf{w}'(p)$.  In the following subsection we give, as examples, two results which are proved using the above method.

\subsection{The connectivity function of small cycles}
In this subsection we prove Theorems~\ref{theorem: connected function C4} and~\ref{theorem: connected function C5} using the above method. Furthermore, the method gives us an extremal example in each case.
\begin{proof}[Proof of Theorem~\ref{theorem: connected function C4}]
For $C_4$ and $p \in [\frac{1}{2},1]$ an extremal construction is given by the measure $\mu$, defined by 
\begin{equation}
\mu(\hat{H}) = \begin{cases} 
2p-1 & \text{if } H=C_4;\\
\frac{p(1-p)}{2} & \text{if $H$ is contains precisely two edges, which are adjacent;}\\
(1-p)^2 & \text{if $H$ is contains precisely two edges, which are not adjacent;}\\
0 & \text{otherwise.}\\
\end{cases}
\nonumber
\end{equation}
For $C_4$ and $p \in [0,\frac{1}{2}]$ an extremal construction is given by the measure $\mu$, defined by 
\begin{equation}
\mu(\hat{H}) = \begin{cases} 
1-2p & \text{if $H$ is the empty graph;}\\
\frac{p(1-p)}{2} & \text{if $H$ is contains precisely two edges, which are adjacent;}\\
p^2 & \text{if $H$ is contains precisely two edges, which are not adjacent;}\\
0 & \text{otherwise.}\\
\end{cases}
\nonumber
\end{equation}
\end{proof}
We can in fact give a direct combinatorial proof of the lower bound in Theorem~\ref{theorem: connected function C4}: for any $\mu \in \mathcal{M}_{1,\geqslant p}(C_4)$, we have by $1$-independence that
\begin{align*}
\mu(\{\textrm{connected}\})&\geqslant \mu(\{12, 34 \textrm{ are open}\})-\mu(\{23, 14 \textrm{ are closed}\})\geqslant p^2-(1-p)^2=2p-1.
\end{align*}
Together with the first of the constructions of measures $\mu$ above (which can be found by analysing how the bound in the inequality above can be tight), this gives a second and perhaps more insightful proof of Theorem~\ref{theorem: connected function C4} than the one obtained from applying the linear optimisation method. However for the next result, on $f_{1,C_5}(p)$, we do not have a combinatorial proof, and our result relies solely on linear optimisation.
\begin{proof}[Proof of Theorem~\ref{theorem: connected function C5}]
For $C_5$ and $p \in [\frac{\sqrt{3}}{3},1]$ an extremal construction is given by the measure $\mu$, defined by 
\begin{equation}
\mu(\hat{H}) = \begin{cases} 
\frac{p(3p^2-1)}{3p-1} & \text{if } H=C_5;\\
\frac{p(1-p)(2p-1)}{5(3p-1)} & \text{if $H$ is missing precisely two edges, which are adjacent;}\\
\frac{p(1-p)^2}{5(3p-1)} & \text{if $H$ is missing precisely two edges, which are not adjacent;}\\
\frac{(2p-1)(1-p)^2}{3p-1} & \text{if $H$ is the empty graph;}\\
0 & \text{otherwise.}\\
\end{cases}
\nonumber
\end{equation}
For $C_5$ and $p \in [0,\frac{\sqrt{3}}{3}]$ an extremal construction is given by the measure $\mu$, defined by
\begin{equation}
\mu(\hat{H}) = \begin{cases} 
\frac{5p^3-5p^2-2p+2}{3p+2} & \text{if $H$ is the empty graph;}\\
\frac{p(1-3p^2)}{3p+2} & \text{if $H$ consists of precisely two edges, which are adjacent;}\\
\frac{p^3}{3p+2} & \text{if $H$ is missing precisely two edges, which are adjacent;}\\
\frac{p^2(1+p)}{3p+2} & \text{if $H$ is missing precisely two edges, which are not adjacent;}\\
0 & \text{otherwise.}\\
\end{cases}
\nonumber
\end{equation}
\end{proof}

\subsection{General bounds for cycles of length at least {$6$}}
We can use Markov's inequality to derive the following simple lower bound on $f_{1,C_n}(p)$ for $n\geqslant 6$.
\begin{proposition}\label{proposition: Markov for Ck}
	For $n \in \mathbb{N}$, with $n \geqslant 6$, and $p \in [0,1]$, we have $f_{1,C_n}(p) \geqslant \frac{np-(n-2)}{2}$.
\end{proposition}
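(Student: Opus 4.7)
The plan is to apply Markov's inequality to the number of closed edges. The key observation is combinatorial: a spanning subgraph $H$ of $C_n$ is connected (contains a spanning tree) if and only if at most one edge of $C_n$ is missing from $H$. Indeed, $C_n$ has only $n$ edges while a spanning tree needs $n-1$, and removing any two or more edges from $C_n$ disconnects it. Thus, if we let $Z$ denote the number of closed edges in $\mathbf{(C_n)}_{\mu}$, then the event that $\mathbf{(C_n)}_{\mu}$ is disconnected coincides exactly with the event $\{Z \geqslant 2\}$.

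The next step is to bound $\mathbb{E}_{\mu}[Z]$. Since $\mu \in \mathcal{M}_{1, \geqslant p}(C_n)$, each edge of $C_n$ is closed with probability at most $1-p$, and by linearity of expectation $\mathbb{E}_{\mu}[Z] \leqslant n(1-p)$. Applying Markov's inequality to the non-negative integer valued random variable $Z$ yields
\begin{equation*}
\mu(\{\mathbf{(C_n)}_{\mu} \text{ is disconnected}\}) = \mu(\{Z \geqslant 2\}) \leqslant \frac{\mathbb{E}_{\mu}[Z]}{2} \leqslant \frac{n(1-p)}{2}.
\end{equation*}
Taking complements gives
\begin{equation*}
\mu(\{\mathbf{(C_n)}_{\mu} \text{ is connected}\}) \geqslant 1 - \frac{n(1-p)}{2} = \frac{np - (n-2)}{2},
\end{equation*}
and since $\mu$ was arbitrary, this establishes $f_{1,C_n}(p) \geqslant (np-(n-2))/2$.

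Notably, this argument uses neither $1$-independence nor the hypothesis $n \geqslant 6$ in any essential way; the restriction to $n \geqslant 6$ in the statement simply reflects the fact that sharper results for $n \in \{3,4,5\}$ have already been established in Theorems~\ref{theorem: connected function Kn}, \ref{theorem: connected function C4}, and~\ref{theorem: connected function C5}. There is no real obstacle here, as the entire proof is a one-line application of Markov's inequality once the combinatorial characterisation of connectivity in subgraphs of $C_n$ is noted.
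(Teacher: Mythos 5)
Your proof is correct and is essentially identical to the paper's: both characterise connectivity of a spanning subgraph of $C_n$ as having at most one closed edge, then apply Markov's inequality to the number of closed edges using $\mathbb{E}_\mu[Z] \leqslant n(1-p)$. Your closing remark that neither $1$-independence nor $n\geqslant 6$ is used is also accurate; the paper imposes $n\geqslant 6$ in the statement only because Theorems~\ref{theorem: connected function Kn}--\ref{theorem: connected function C5} give the exact (and stronger) answer for $n\leqslant 5$.
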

A small adjustment to this argument gives the following improvement for $n=6$.
\begin{proposition}\label{proposition: improved Markov for C6}
	For $p \in [0,1]$ we have that $f_{1,C_6}(p) \geqslant  -p^3+3p^2-1$.
\end{proposition}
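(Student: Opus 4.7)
My plan is to refine the Markov-style argument that presumably underlies Proposition~\ref{proposition: Markov for Ck} by using $1$-independence to extract an additional slack term of size $q^3$, where $q := 1-p$.

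First, I would reduce to the case of constant edge-probability: by random sparsification (Remark~\ref{remark: random sparsification}), I may assume without loss of generality that $\mu(\{e_i\textrm{ is closed}\}) = q$ exactly for every edge $e_i$ of $C_6$, since sparsifying can only decrease the probability of being connected. Writing $A_i$ for the event that $e_i$ is closed and $X := \sum_{i=1}^6 \mathbbm{1}_{A_i}$ for the number of closed edges, $C_6$ fails to be connected precisely when $X \geqslant 2$, and Markov already gives $\mu(X \geqslant 2) \leqslant \mathbb{E}_\mu[X]/2 = 3q$.

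The key step is a combinatorial pointwise improvement. Labelling the edges of $C_6$ cyclically as $e_1, \ldots, e_6$, the triple $\{e_1, e_3, e_5\}$ consists of pairwise vertex-disjoint edges at graph distance $1$, and hence is a pairwise $1$-distant family (and similarly for $\{e_2, e_4, e_6\}$). I would verify by a short case analysis on the value of $X\in\{0,1,\ldots,6\}$ that the pointwise inequality
\[\mathbbm{1}_{\{X\geqslant 2\}} \;\leqslant\; \frac{X}{2} \;-\; \frac{1}{2}\bigl(\mathbbm{1}_{A_1\cap A_3\cap A_5} + \mathbbm{1}_{A_2\cap A_4\cap A_6}\bigr)\]
holds in every configuration. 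The genuinely tight subcases are $X\in\{0,2\}$ (where both correction indicators automatically vanish) and $X=3$ with closed edges exactly $\{e_1,e_3,e_5\}$ or $\{e_2,e_4,e_6\}$; in all other cases there is strict slack.

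Finally, taking $\mu$-expectations in the pointwise inequality and invoking $1$-independence to evaluate $\mu(A_1\cap A_3\cap A_5) = \mu(A_2\cap A_4\cap A_6) = q^3$, I obtain
\[\mu(X\geqslant 2) \;\leqslant\; \frac{\mathbb{E}_\mu[X]}{2} - \frac{q^3+q^3}{2} \;=\; 3q - q^3,\]
and rearranging gives $f_{1,C_6}(p) \geqslant 1 - 3q + q^3 = -p^3 + 3p^2 - 1$, as required. The only non-routine point is discovering the correction term: the two triples of odd and even edges are precisely the maximal pairwise $1$-distant subsets of $E(C_6)$, so they are the natural candidates whose joint closure probability can be computed exactly via $1$-independence, and they also correspond exactly to the configurations at $X=3$ where the Markov inequality $\mathbbm{1}_{X\geqslant 2} \leqslant X/2$ has its smallest unsaturated slack.
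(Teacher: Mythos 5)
Your proof is correct and follows essentially the same strategy as the paper: both arguments sharpen the Markov bound $\mu(X\geqslant 2)\leqslant \mathbb{E}_\mu[X]/2$ by a correction term of $2q^3$ supplied by the two pairwise $1$-distant triples $\{e_1,e_3,e_5\}$ and $\{e_2,e_4,e_6\}$, whose joint-closure probabilities are each $q^3$ by $1$-independence. Your pointwise inequality is just a cleaner packaging of the paper's direct accounting through the quantities $\mu(X=k)$, and your explicit sparsification step (to reduce to edge-probability exactly $p$) makes precise a point the paper leaves implicit.
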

\begin{proof}[Proof of Proposition~\ref{proposition: Markov for Ck}]
Let $\mu \in \mathcal{M}_{1,\geqslant p}(C_k)$. Note that $\mathbf{G}_{\mu}$ is connected if and only if it has at most one closed edge. Thus by Markov's inequality, we have
\begin{eqnarray}
f_{1,C_k}(p) & = & 1-\mu (\exists \geqslant 2 \text{ closed edges in $C_k$}) \nonumber \\
& \geqslant & 1- \frac{\mathbb{E}_{\mu}(\#\text{ closed edges in $C_k$})}{2} \nonumber \\
& = & 1 - \frac{k(1-p)}{2} = \frac{kp-(k-2)}{2}. \nonumber
\end{eqnarray}
\end{proof}

\begin{proof}[Proof of Proposition~\ref{proposition: improved Markov for C6}]
Let $X$ be the number of closed edges in $\mathbf{G}_{\mu}$. 
Cyclically label the edges of $C_6$ as $e_1,\dots,e_6$. Then by simple counting, 
\begin{align*}
2(1-p)^3 & = \mu(e_1, e_3, e_5 \text{ are closed}) + \mu(e_2, e_4, e_6 \text{ are closed}) \\
& \leqslant \mu(X=3) + \mu(X=4) + \mu(X=5) + 2\mu(X=6).
\end{align*}
Now by simple counting again, linearity of expectation and the inequality above, we get: 
\begin{align*}
f_{1,C_6}(p) & = 1- \mu(X \geqslant 2) = 1 - \frac{\mathbb{E}_{\mu}(X)}{2}+ \\
& \left(\frac{\mu(X=1)+\mu(X=3)+2\mu(X=4)+3\mu(X=5)+4\mu(X=6)}{2}\right) \\
& \geqslant 1-\frac{6(1-p)}{2}+(1-p)^3=-p^3+3p^2-1.
\end{align*}
\end{proof}
\section{Maximising connectivity}\label{section: maximising connectivity}
In this section, we derive our results for maximising connectivity in $1$-independent modes. First of all Theorem~\ref{theorem: connected function Kn} allow us to easily determine the value of $F_{1, K_n(p)}$ and hence prove Theorem~\ref{theorem: maximising connectivity in Kn}.
\begin{proof}[Proof of Theorem~\ref{theorem: maximising connectivity in Kn}]
	Given a $1$-independent model $\mathbf{G}$ on $K_n$ with edge-probability at least $1-p$, observe that the complement $\mathbf{G}^c$ of $\mathbf{G}$ in $K_n$ is a $1$-independent model in which every edge is open with probability at most $p$. Furthermore, $\mathbf{G}^c$ is connected whenever $\mathbf{G}$ fails to be connected. This immediately implies
	\begin{align}\label{eq: complement of slow connectivity}
	1-f_{1, K_n}(1-p)\leqslant F_{1, K_n}(p).
	\end{align}
	Furthermore, observe that the Red-Blue measure $\nu_p$ we constructed to obtain the upper bound on $f_{1, K_n}(p)$ in the proof of Theorem~\ref{theorem: connected function Kn} has the property that a $\nu_{p}$-random graph is connected if and only if its complement fails to be connected. This immediately implies that we have equality in (\ref{eq: complement of slow connectivity}).
\end{proof}
\noindent For paths, a simple construction 
achieves the obvious upper bound for $F_{1,P_n}(p)$.
\begin{proof}[Proof of Theorem~\ref{theorem: maximising connectivity in Pn}]
	For any measure $\mu\in \mathcal{M}_{1, \leqslant p}(P_n)$, we have by $1$-independence that
	\begin{align*}
	\mu\left(\{\textrm{connected}\}\right)=\mu(\{P_n\})\leqslant \mu\left(\bigcap_{1\leqslant i \leqslant \lfloor \frac{n}{2}\rfloor}\{\textrm{the edge $\{2i-1, 2i\}$ is open}\}\right)\leqslant p^{\lfloor \frac{n}{2}\rfloor},
	\end{align*}
	which implies $F_{1,P_n}(p)\leqslant p^{\lfloor \frac{n}{2}\rfloor}$. For the lower bound, we construct a $1$-ipm as follows. For each integer $i$: $1\leqslant i \leqslant n/2$, we assign a state On to the vertex $2i$ with probability $p$, and a state Off otherwise, independently at random. Then set an edge of $P_n$ to be open if one of its endpoints is in state On, and closed otherwise. This is easily seen to yield a $1$-ipm $\mu$ on $P_n$ in which every edge is open with probability $p$, and for which
	\begin{align*}
	\mu\left(\{\textrm{connected}\}\right)=\mu\left(\bigcap_{1\leqslant i \leqslant \lfloor \frac{n}{2}\rfloor}\{\textrm{the vertex $2i$ is in state On}\}\right)= p^{\lfloor \frac{n}{2}\rfloor}.
	\end{align*}
	Thus $F_{1,P_n}(p)\geqslant p^{\lfloor \frac{n}{2}\rfloor}$, as claimed.
\end{proof}	
The case of cycles $C_n$ appears to be slightly more subtle. For the $4$-cycle, as in the previous section, we can give two proofs, one combinatorial and the other via linear optimisation.
\begin{proof}[Proof of Theorem~\ref{theorem: connected function C4}]
The theorem immediately follows from an application of the linear optimisation techniques from Section~\ref{section: cycles}. Alternatively, we can obtain the upper bound by a direct argument.	For any measure $\mu\in \mathcal{M}_{1, \leqslant p}(C_4)$, we have by $1$-independence that
	\begin{align*}
	1-\mu(\{\textrm{connected} \})&\geqslant \mu\left(\{\textrm{both $12$ and $34$ are closed}\}\right)\geqslant (1-p)^2,
	\end{align*}
	and, by a simple union bound and $1$-independence,
	\begin{align*}
	\mu(\{\textrm{connected} \})&\leqslant \mu\left(\{\textrm{both  $12$ and $34$ are open}\}\cup \{\textrm{both $23$ and $14$ are open}\}\right) \leqslant 2p^2.
	\end{align*}	
	Combining these two inequalities and using $1-(1-p)^2=2p-p^2$, we obtain
	\begin{align*}
	\mu(\{\textrm{connected} \})&\leqslant \min\left(2p^2, 2p-p^2\right),
	\end{align*}	
	which gives the claimed upper bound on $F_{1, C_4}(p)$.

	For the lower bound, we give two different constructions, depending on the value of $p$. For $p \in [\frac{2}{3},1]$ consider the measure $\mu$ defined by 
	\begin{equation}
	\mu(\hat{H}) = \begin{cases} 
	p(3p-2) & \text{if } H=C_4;\\
	p(1-p) & \text{if $H$ contains precisely three edges;}\\
	1-p(2-p) & \text{if $H$ is the empty graph with no edges;}\\
	0 & \text{otherwise.}\\
	\end{cases}
	\nonumber
	\end{equation}
	It is easily checked that $\mu \in \mathcal{M}_{1,\leqslant p}(C_4)$ and that $\mu(\{\textrm{connected}\})=1-(1-p(2-p))=2p-p^2$, which is maximal for $p$ in that range.

	For  $p \in [0,\frac{2}{3}]$, consider the measure $\mu$ defined by
	\begin{equation}
	\mu(\hat{H}) = \begin{cases} 
	\frac{p^2}{2} & \text{if $H$ contains precisely three edges;}\\
	\frac{p(2-3p)}{2} & \text{if $H$ contains precisely one edge;}\\
	1-4p(1-p) & \text{if $H$ is the empty graph with no edges;}\\
	0 & \text{otherwise.}\\
	\end{cases}
	\nonumber
	\end{equation}
	Again, it is easily checked that $\mu \in \mathcal{M}_{1,\leqslant p}(C_4)$ and that $\mu(\{\textrm{connected}\})=\mu\left(\{\geqslant 3\textrm{ edges open}\}\right)=2p^2$, which is maximal for $p$ in that range.	
\end{proof}

\begin{proof}[Proof of Theorem~\ref{theorem: connected function C5}]
	We simply apply the linear optimisation method from Section~\ref{section: cycles} --- here again we do not have a combinatorial proof. In addition to establishing the theorem, this gives us constructions of extremal $1$-independent measures maximising connectivity.

	For  $p \in [\frac{3}{5},1]$ an extremal construction is given by the measure $\mu$, defined by 
		\begin{equation}
	\mu(\hat{H}) = \begin{cases} 
	\frac{p(5p-3)}{5-3p} & \text{if $H=C_5$;}\\ 
	\frac{p(1-p^2)}{5-3p} & \text{if $H$ contains precisely four edges;}\\
	\frac{3p^3-7p^2+5p-1}{5-3p} & \text{if $H$ contains precisely two edges, which are adjacent;}\\
	\frac{2(1-p)^3}{5-3p} & \text{if $H$ contains precisely one edge;}\\
	0 & \text{otherwise.}\\ 
	\end{cases}
	\nonumber
	\end{equation}
	For  $p \in [\frac{1}{2}, \frac{3}{5}]$ an extremal construction is given by the measure $\mu$, defined by 
			\begin{equation}
	\mu(\hat{H}) = \begin{cases} 
	\frac{p^2}{3} & \text{if $H$ contains precisely four edges;}\\
	\frac{p(2-3p)}{3} & \text{if $H$ contains precisely two edges, which are adjacent;}\\
	\frac{p(2p-1)}{3} & \text{if $H$ contains precisely one edge;}\\
	\frac{3-5p}{3} & \text{if $H$ is the empty graph with no edges;}\\
	0 & \text{otherwise.}\\
	\end{cases}
	\nonumber
	\end{equation}
	For  $p \in [0,\frac{1}{2}]$ an extremal construction is given by the measure $\mu$, defined by 
			\begin{equation}
	\mu(\hat{H}) = \begin{cases} 
	\frac{p^2(p+1)}{p+4} & \text{if $H$ contains precisely four edges;}\\
	\frac{p^2(1-2p)}{p+4} & \text{if $H$ is missing precisely two edges, which are adjacent;}\\
	\frac{p(p^2-3p+2)}{p+4} & \text{if $H$ contains precisely two edges, which are adjacent;}\\
	\frac{5p^2-9p+4}{p+4} & \text{if $H$ is the empty graph with no edges;}\\
	0 & \text{otherwise.}\\
	\end{cases}
	\nonumber
	\end{equation}
\end{proof}

\section{Proof of Theorem~\ref{theorem: upper bounds on long paths critical prob}}\label{section: long paths}
Combining Corollary~\ref{corollary: bound on long path constant in products with the line} with our results on $1$-independent connectivity, much of Theorem~\ref{theorem: upper bounds on long paths critical prob} is immediate.
\begin{proof}[Proof of Theorem~\ref{theorem: upper bounds on long paths critical prob}]
	For the lower bound in part (i), we note that $\mathbb{Z}\times C_n$ has the finite 2-percolation property.  Thus, as described after the proof of Theorem \ref{theorem: local construction integer lattice}, we have that $p_{1, \ell\mathit{p}}(\mathbb{Z}\times C_n)\geqslant 4-2 \sqrt{3}$.  For the upper bound in part (i), since the long paths critical probability is non-decreasing under the addition of edges, we have 
	\[p_{1, \ell\mathit{p}}(\mathbb{Z}\times C_n)\leqslant p_{1, \ell\mathit{p}}(\mathbb{Z}\times P_n) \leqslant p_{1, \ell\mathit{p}}(\mathbb{Z}\times P_2),\]
	which is at most $2/3$ by Theorem~\ref{theorem: long paths critical prob}(ii).

	For the upper bounds (ii)--(iv) Theorem~\ref{theorem: upper bounds on long paths critical prob} follow directly from our results on $1$-independent connectivity functions. For $G=K_3, C_4, C_5$, we plug in the value of $f_{1, G}(p)$ in equation~(\ref{equation:  pstar}), solve for $p_{\star}(G)$ and apply Corollary~\ref{corollary: bound on long path constant in products with the line}.

	In part (v), we begin by noting that as we are considering an increasing nested sequence of graphs, the sequence $\left(p_{1, \ell \mathit{p}}(\mathbb{Z}\times K_n)\right)_{n\in \mathbb{N}}$ is non-increasing in $[0,1]$ and hence tends to a limit as $n\rightarrow \infty$. For the lower bound in (v), observe that for any $n\in \mathbb{N}$ the graph $\mathbb{Z}\times K_n$ has the finite 2-percolation property -- indeed for any finite $k$, the closure of a copy of $P_k\times K_n$ under $2$-neighbour bootstrap percolation in $\mathbb{Z}\times K_n$ is equal to itself. We construct a $1$-ipm $\mu$ on $\mathbb{Z}\times K_n$ as in Corollary~\ref{corollary: if 2-percolation property, then lb on p_{1,c}} but with starting set $T_0=\{0\}\times V(K_n)$ and hence $T_k=(\{k\}\times V(K_n))\cup (\{-k\}\times V(K_n))$.  It is easily checked that $\mu$-almost surely, all components (and hence all paths) in a $\mu$-random graph have length at most $5n$. Since by construction $d(\mu)=4-2\sqrt{3}$, this proves 
	\[p_{1, \ell \mathit{p}}\left(\mathbb{Z}\times K_n\right)\geqslant 4-2\sqrt{3}\]
	for all $n \in \mathbb{N}$. For the upper bound, we perform some simple analysis. By solving a quadratic equation, we see that
	\[\left(\frac{1+\sqrt{2p-1}}{2}\right)^2 >  (1-p)\]
	for all fixed $p\in (\frac{5}{9}, 1)$. Then by Theorem~\ref{theorem: connected function Kn}, for any such fixed $p$ and all $n$ sufficiently large, we have that 
	\begin{align*}
\left(f_{1, K_n}(p)\right)^2&> \left(\frac{1+\sqrt{2p-1}}{2}\right)^{2n}>4(1-p)^n= 4(1-p)^{v(K_n)}.
	\end{align*}
Thus $p_{\star}(K_n)<p$ for all $n$ sufficiently large, which by Corollary~\ref{corollary: bound on long path constant in products with the line} implies $p_{1, \ell \mathit{p}}\left(\mathbb{Z}\times K_n\right)<p$.
\end{proof}

\section{Open problems}\label{section: open problems}
\subsection{More tractable subclasses of $1$-independent measures}\label{subsection: problems on simpler classes of 1-ipms}
\noindent The most obvious open problem about $1$-independent percolation is of course whether the known lower and upper bounds on $p_{1, c}(\mathbb{Z}^2)$ can be improved. This problem is, we suspect, very hard in general. However, it may prove more tractable if we restrict our attention to a smaller family of measures.
\begin{definition}
	Let $G$ be a graph. A $G$-partition is a partitioned set $\sqcup_{v\in V(G)}\Omega_v$, with non-empty parts indexed by the vertices of $G$. A $G$-partite graph is a graph $H$ on a $G$-partition $V(H)=\sqcup_{v\in V(G)}\Omega_v$ whose edges are a subset of the union of the complete bipartite graphs $\sqcup_{uv\in E(G)}\{\omega_u\omega_v: \ \omega_u \in \Omega_u, \omega_v \in\Omega_v\}$ corresponding to the edges of $G$.
\end{definition}
\noindent Given a $G$-partite graph $H$ on a $G$-partition $\sqcup_{v\in V(G)}\Omega_v$, we have a natural way of constructing $1$-independent bond percolation models: given a family $\mathbf{X}=\left(S_v\right)_{v\in V(G)}$ of independent random variables with $S_v$ taking values in $\Omega_v$, the \emph{$(H, \mathbf{X})$-random subgraph} of $G$, denoted by $H[\mathbf{X}]$, is the random configuration on $E(G)$ obtained by setting $uv$ to be open if and only if $S_uS_v\in E(H)$.
\begin{definition}\label{def: state based measure}
	Let $G$ be a graph. A measure $\mu \in \mathcal{M}_{1,\geqslant p}(G)$ is said to be vertex-based if there exist
	\begin{itemize}
		\item a $G$-partition $\sqcup_{v\in V(G)}\Omega_v$,
		\item  an associated $G$-partite graph $H$, and
		\item a collection of independent random variables $(S_v)_{v\in V(G)}$ with $S_v$ taking values in $\Omega_v$,
	\end{itemize} 
	such that the $(H, \mathbf{X})$-random subgraph $H[\mathbf{X}]$ has the same distribution as the $\mu$-random graph $\mathbf{G}_{\mu}$.
\end{definition}
\noindent Let $\mathcal{M}_{\mathrm{vb}, \geqslant p}(G)$ denote the collection of all vertex-based measures on $G$ with edge-probability at least $p$.
\begin{problem}\label{problem: state-based measures for percolation}
	Determine $\inf\Bigl\{p\in [0,1]: \ \forall \mu \in \mathcal{M}_{\mathrm{vb}, \geqslant p}(\mathbb{Z}^2), \ \mu(\{\mathrm{percolation}\})=1 \Bigr\}$.
\end{problem}
\noindent 
Vertex-based measures arise naturally in renormalising arguments, and are thus a natural class of examples to consider. A special case of Problems~\ref{problem: Harris critical probabiliy for 1-ipm} and~\ref{problem: state-based measures for percolation} is obtained by further restricting our attention to the case where the $\Omega_v$ have bounded size.
\begin{definition}
	A vertex-based measure $\mu$	on a graph $G$ is $N$-uniformly bounded if it as in Definition~\ref{def: state based measure} above and in addition for each $v\in V(G)$, $\vert \Omega_v\vert \leqslant N$. Furthermore, a vertex-based measure $\mu$ on a graph $G$ is uniformly bounded if it is $N$-uniformly bounded for some $N\in \mathbb{N}$.
\end{definition}
Let $\mathcal{M}_{N-\mathrm{ubvb}, \geqslant p}(G)$ and $\mathcal{M}_{\mathrm{ubvb}, \geqslant p}(G)$  denote the collection of all vertex-based measures on $G$ with edge-probability at least $p$ that are $N$-uniformly bounded and uniformly bounded respectively.
\begin{problem}\label{problem: ub state-based measures for percolation}
\begin{enumerate}[(i)]
	\item For $N\in \mathbb{N}$, determine 
	\[\inf\Bigl\{p\in [0,1]: \ \forall \mu \in \mathcal{M}_{N-\mathrm{ubvb}, \geqslant p}(\mathbb{Z}^2), \ \mu(\{\mathrm{percolation}\})=1 \Bigr\}.\]
	\item  Determine 
	\[\inf\Bigl\{p\in [0,1]: \ \forall \mu \in \mathcal{M}_{\mathrm{ubvb}, \geqslant p}(\mathbb{Z}^2), \ \mu(\{\mathrm{percolation}\})=1 \Bigr\}.\]
\end{enumerate} 
\end{problem}
Finally, let us note that the second most obvious problem arising from our work, besides that of improving the bounds on $p_{1, c}(\mathbb{Z}^2)$, is arguably that of giving bounds on $p_{1, \ell \mathit{p}}(\mathbb{Z}^2)$ and closely related variants. Such problems, which correspond to new questions in extremal graph theory, are discussed in the subsections below. For these problems too we believe restrictions to the class of uniformly bounded vertex-based $1$-ipms could be both fruitful and interesting in their own right.
\subsection{Harris critical probability for other lattices}
\noindent Beyond $\mathbb{Z}^2$, it is natural to ask about bounds on $p_{1, c}(G)$ for some of the other commonly studied lattices in percolation theory.
\begin{problem}\label{problem: 1-indep crit prob for other lattices}
	Give good bounds on the value of $p_{1,c}(G)$ when $G$ is one of the eleven Archimedean lattices in the plane or the $d$-dimensional integer lattice $\mathbb{Z}^d$.
\end{problem}
\noindent This problem is particularly interesting when $G$ is the triangular lattice or the honeycomb lattice (two lattices for which the $0$-independent Harris critical probability is known exactly), or the cubic integer lattice $\mathbb{Z}^3$ (which is important in applications). A challenge in all cases is finding constructions of non-percolating $1$-independent measures with high edge-probability --- indeed, our arsenal of constructions for $1$-independent percolation problems is so sparse that any new construction could be of independent interest.

In a different direction, we can observe that $\mathbb{Z}^{d+1}$ contains a copy of $\mathbb{Z}^d$, whence the sequence $\left(p_{1, c}(\mathbb{Z}^d)\right)_{d\in \mathbb{N}}$ is non-increasing in $[0,1]$ and converges to a limit. Balister and Bollob\'as asked for its value:
\begin{problem}\label{problem: limit of 1-ip Harris crit prob for Zd}[Balister, Bollob\'as~\cite{BalisterBollobas12}]
	Determine
	$\lim_{d\rightarrow \infty} p_{1,c}(\mathbb{Z}^d)$.
\end{problem}
\noindent Note that by Theorem~\ref{theorem: local construction integer lattice} proved in this paper, this limit must be at least $4-2\sqrt{3}$.
\subsection{Other notions of $1$-independent critical probabilities}\label{subsection: different critical probabilities}
Let $G$ be an infinite, locally finite connected graph, and $v_0$ a fixed vertex of $G$. Given a bond percolation model $\mu$ on $G$, we let $\mathbf{C}_{v_0}$ denote the connected component of $\mathbf{G}_{\mu}$ containing $v_0$.

If $\mu$ is $0$-independent, then $\mu(\{\mathrm{percolation}\})=1$ if and only if $\mu(\{\vert \mathbf{C}_0\vert =\infty\})>0$. However this need not be true for a $1$-independent measure. Indeed, consider the $1$-ipm on $\mathbb{Z}^2$ obtained by taking the measure constructed in the proof of Theorem~\ref{theorem: local construction integer lattice} to determine the state of the edges in the $\ell_{\infty}$ ball of radius $3$ around the origin and setting every other edge to be open independently at random with probability $4-2\sqrt{3}$. Then in this model percolation occurs almost surely, but the origin is contained inside a component of order at most $28$.

Thus in principle there are different edge-probability thresholds in $1$-independent percolation on a graph $G$ for percolation to occur \emph{somewhere} with probability $1$ and for it to occur \emph{anywhere} with strictly positive probability. Indeed, if $p_{1,c}(\mathbb{Z}^2)$ were strictly less than $3/4$, then one could obtain examples of such a graph $G$  by attaching a long path to the origin in $\mathbb{Z}^2$.

Another critical edge-probability of interest is the \emph{Temperley critical probability}, which in $0$-independent percolation is the threshold $p_T$ at which $\mathbb{E}\vert \mathbf{C}_v\vert =\infty$ for any vertex $v$ (and every $0$-independent measure with edge-probability $>p_T$). In general this threshold is different from the Harris critical probability. Again for $1$-independent percolation we have that the threshold for \emph{some} vertex $v\in V(G)$ to satisfy $\mathbb{E}\vert \mathbf{C}_v\vert =\infty$ and for the threshold for \emph{all} vertices of $G$ to satisfy this are different.

\begin{problem}\label{problem: different critical prob}
	Given an infinite, locally finite connected graph $G$, determine the following four critical probabilities:
	\begin{align*}
	p_{1,T_1}(G)&:=\inf\Bigl\{p\in [0,1]: \ \forall \mu \in \mathcal{M}_{1,\geqslant p}(G), \exists v\in V(G): \ \mathbb{E}_{\mu} \vert \mathbf{C}_v\vert =\infty \Bigr\},\\
	p_{1,T_2}(G)&:=\inf\Bigl\{p\in [0,1]: \ \forall \mu \in \mathcal{M}_{1,\geqslant p}(G), \forall v\in V(G): \ \mathbb{E}_{\mu} \vert \mathbf{C}_v\vert =\infty \Bigr\},\\
	p_{1,H_1}(G)&:=\inf\Bigl\{p\in [0,1]: \ \forall \mu \in \mathcal{M}_{1,\geqslant p}(G), \exists v\in V(G): \ \mu\left(\vert \mathbf{C}_v\vert =\infty\right)>0 \Bigr\},\\
		p_{1,H_2}(G)&:=\inf\Bigl\{p\in [0,1]: \ \forall \mu \in \mathcal{M}_{1,\geqslant p}(G), \forall v\in V(G): \ \mu\left(\vert \mathbf{C}_v\vert =\infty\right)>0 \Bigr\}.
	\end{align*}
\end{problem}
 It follows from their definition that these four critical probabilities satisfy
\begin{align}\label{eq: ineq between four crit prob}
p_{1,T_1}(G)\leqslant p_{1,T_2}(G) \leqslant p_{1, H_2}(G) \quad \textrm{ and } \quad p_{1,T_1}(G)\leqslant p_{1,H_1}(G) \leqslant p_{1, H_2}(G).
\end{align}
In general, these four critical probabilities are all different. Indeed, Balister and Bollob\'as showed in~\cite[Theorem 1.5]{BalisterBollobas12} that there exists an infinite, locally finite connected graph $G$ with $p_{1, H_1}(G)=\frac{1}{2}$.  For any $p$: $\frac{1}{2}<p<\frac{3}{4}$, we have shown in Theorem~\ref{theorem: connected function Pn} that there exists $N$ such that $f_{1, P_N}(p)=0$. Attach one end of a path of length $N$ to an arbitrary vertex of $G$ to form a graph $G_1$, and let $v$ denote the other end of the path. Then there exist $1$-ipm $\mu \in \mathcal{M}_{1, \geq p}(G_1)$ such that with probability $1$ the component of $v$ in a $\mu$-random graph has order at most $N$, which is finite. Thus we have 
\[p_{1,T_1}(G_1)\leq p_{1, H_1}(G_1)\leq p_{1, H_1}(G)=\frac{1}{2}<p\leq p_{1,T_2}(G_1) \leq p_{1, H_2}(G_1).\]
On the other hand consider a graph $G_2$ obtained from the line lattice by attaching to each vertex $i\in \mathbb{Z}$ a collection of  $2^{\vert i \vert +2}$ leaves.  Clearly, $p_{1, H_1}(G_2)= p_{1, H_1}(\mathbb{Z})=1$. Now consider a $1$-ipm $\mu\in \mathcal{M}_{1, \geq \frac{3}{4}}(G_2)$. 
By Theorem~\ref{theorem: crossing G times Pn} applied to $G=K_1$ and $\alpha=\frac{1}{2}$, for any path $P$ of length $i$ in $G_2$, the $\mu$-probability that all edges in $P$ are open is at least $2^{-(i+1)}$. Thus for any $v_0\in V(G_2)$, the  expected size of $\vert \mathbf{C}_{v_0}\vert$ is
\begin{align*}\mathbb{E}_{\mu}\vert \mathbf{C}_{v_0}\vert =\sum_{v\in V(G_2)}\mu\left(\{v\in \mathbf{C}_{v_0}\}\right)&\geq \sum_{i \in \mathbb{Z}_{\geq 2}} \#\{v: \ \textrm{ the path from $v_0$ to $v$ has length $i$}\} 2^{-(i+1)}\\&\geq \sum_{i\in \mathbb{Z}_{\geq 2}} 2^{i+1} 2^{-(i+1)}=\infty.\end{align*}
Thus we have
\[p_{1, T_1}(G_2)\leq p_{1, T_2}(G_2)\leq \frac{3}{4}<1=p_{1,H_1}(G_2)=p_{1, H_2}(G_2).\]
\begin{corollary}\label{corollary: optimal inequalities between four crit prob}
\begin{enumerate}[(i)]
	\item None of the inequalities in (\ref{eq: ineq between four crit prob}) may be replaced by an equality. 
	\item Neither $p_{1, T_2}(G)\leqslant p_{1, H_1}(G)$ nor the reverse inequality are true in general. \qedsymbol
\end{enumerate}	
\end{corollary}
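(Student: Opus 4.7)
The plan is to derive both parts of the corollary directly from the two graphs $G_1$ and $G_2$ that were constructed in the paragraphs immediately preceding the corollary statement. I will not need to build any new examples; the entire proof reduces to extracting the right strict inequalities from the two chains of comparisons already established.

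I will first handle part (ii). The chain of inequalities derived for $G_1$ contains the strict gap
\[p_{1,H_1}(G_1) \leqslant \tfrac{1}{2} < p \leqslant p_{1,T_2}(G_1),\]
which demonstrates that the relation $p_{1,T_2}(G)\leqslant p_{1,H_1}(G)$ need not hold in general. Dually, the chain for $G_2$ contains
\[p_{1,T_2}(G_2) \leqslant \tfrac{3}{4} < 1 = p_{1,H_1}(G_2),\]
showing that the reverse relation $p_{1,H_1}(G)\leqslant p_{1,T_2}(G)$ likewise fails in general. Taking these two observations together disposes of (ii).

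For part (i), I must exhibit, for each of the four inequalities in~(\ref{eq: ineq between four crit prob}), some graph on which it is strict. The plan is to show that the two chains from $G_1$ and $G_2$ jointly witness all four. From $G_1$ I extract $p_{1,T_1}(G_1) < p_{1,T_2}(G_1)$ and $p_{1,H_1}(G_1) < p_{1,H_2}(G_1)$, since the entire left half of that chain is bounded above by $\tfrac{1}{2}$ while the entire right half is bounded below by $p > \tfrac{1}{2}$. From $G_2$ I read off the remaining two strict inequalities, $p_{1,T_1}(G_2) < p_{1,H_1}(G_2)$ and $p_{1,T_2}(G_2) < p_{1,H_2}(G_2)$, using the gap between $\tfrac{3}{4}$ and $1$ in that chain. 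This supplies a strict witness for each of the four inequalities in~(\ref{eq: ineq between four crit prob}).

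The only real obstacle is bookkeeping: one must carefully verify that the constants $\tfrac{1}{2}$ and $\tfrac{3}{4}$ appearing in the two chains fall on the correct sides of the four relevant pairs, so that each inequality in~(\ref{eq: ineq between four crit prob}) is indeed hit by one of the extracted strict comparisons. No additional probabilistic or combinatorial argument is required beyond those already used to produce $G_1$ and $G_2$.
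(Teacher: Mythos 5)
Your proposal is correct and matches the paper's intended argument exactly; since the corollary is stated with an immediate \qedsymbol, the paper is just recording precisely the extraction of strict gaps from the two chains for $G_1$ and $G_2$ that you carry out, and you correctly identify which pair of strict inequalities each graph witnesses.
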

Observe that $p_{1, H_1}(G)$ is the $1$-independent Harris critical probability $p_{1,c}(G)$ studied in this paper; given Corollary~\ref{corollary: optimal inequalities between four crit prob}, we more precisely should call it the \emph{first} Harris critical probability for $1$-independent percolation. Our construction for the proof of Theorem~\ref{theorem: local construction integer lattice} and the argument of Balister, Bollob\'as and Walters from~\cite{BalisterBollobasWalters05} give the following bounds when $G=\mathbb{Z}^2$:
 \[ 4-2\sqrt{3} \leqslant p_{1,T_1}(\mathbb{Z}^2) \leqslant p_{1,H_2}(\mathbb{Z}^2)\leqslant 0.8639. \]
\begin{question}\label{question: equality of the critical prob in Z^2}
	Are any of the four critical probabilities from Problem~\ref{problem: different critical prob} equal when $G=\mathbb{Z}^2$?
\end{question}
Finally, note that Problem~\ref{problem: different critical prob} asks, in essence, how much we can delay percolation phenomena relative to the $0$-independent case by exploiting the local dependencies between the edges allowed by $1$-independence. While perhaps less useful in applications, it is an equally natural and appealing extremal problem to ask how much we can use these local dependencies to instead hasten the emergence of an infinite connected component. Balister and Bollob\'as were the first to consider this problem in~\cite{BalisterBollobas12}, which it would be remiss not to mention here.
\begin{definition}\label{def: fast percolation}
	Let $G$ be an infinite, locally finite connected graph, and let $\mathcal{M}_{k, \leqslant p}(G)$  be as before the collection of $k$-ipms $\mu$ on $G$ satisfying $\sup_{e\in E(G)}\mu\{e\textrm{ is open}\}\leqslant p$. The critical threshold for fast $k$-independent percolation on $G$ is
	\[p_{k,F}(G) :=\inf\Bigl\{p\in[0,1]: \ \exists \mu \in \mathcal{M}_{k,\leqslant p}(G): \ \mu(\{\mathrm{percolation}\})=1\Bigr\}.\]
\end{definition}
Balister and Bollob\'as determined $p_{1,F}(G)$ when $G$ is an infinite, locally finite tree, and also gave the simple general bounds
\begin{align}\label{eq: Balister Bollobas bounds on fast percolation}
		\frac{1}
		{\left(\mu_{\mathrm{conn.}}(G)\right)^2 } \leqslant p_{1,F}(G)\leqslant \left(\theta_{\mathrm{site}}(G)\right)^2,
\end{align}
where $\mu_{\mathrm{conn.}}(G)$ is the connective constant of $G$ and $\theta_{\mathrm{site}}(G)$ the critical value of the $\theta$-parameter for site percolation on $G$. For the square integer lattice, this gives a lower bound on $p_{1,F}(\mathbb{Z}^2)$ of $0.1393$ from known upper bounds on $\mu_{\mathrm{conn.}}(\mathbb{Z}^2)$. In the other direction, we get a rigorous upper bound of  $p_{1,F}(\mathbb{Z}^2)$ of $0.4618$ and non-rigorous upper bound of $0.3515$ from bounds and estimates for $\theta_{\mathrm{site}}(\mathbb{Z}^2)$. This obviously leaves a big gap, which Balister and Bollob\'as asked to reduce.
\begin{question}\label{question: fast per threshold}[Balister and Bollob\'as~\cite{BalisterBollobas12}]
What is $p_{1, F}(\mathbb{Z}^2)$?	
\end{question}

\subsection{Long paths critical probability}\label{subsection: problems on long paths}
An obvious problem is to tighten the bounds in Theorem~\ref{theorem: upper bounds on long paths critical prob}(v), which are not too far apart (compared to many of the other bounds on critical probabilities for $1$-independent model).
\begin{problem}\label{problem: improving lp crit constant bounds for Kn times long paths}
Determine $\lim_{n\rightarrow \infty} p_{1, \ell \mathit{p}}\left(\mathbb{Z}\times K_n\right)$ (which must be an element of $[4-2\sqrt{3}, \frac{5}{9}]$).	
\end{problem}	
\noindent In a similar vein, the sequence $p_{1, \ell \mathit{p}}(\mathbb{Z}\times P_n)$ is a non-increasing function of $n$ (since $\mathbb{Z}\times P_{n+1}$ contains $\mathbb{Z}\times P_n$ as a subgraph). In this paper, we have given constructions showing that for all integers $n\geqslant 3$,
\[4-2\sqrt{3} \leqslant p_{1, \ell\mathit{p}}(\mathbb{Z}\times C_n)\leqslant p_{1, \ell\mathit{p}}(\mathbb{Z}\times P_n)
\leqslant \frac{2}{3}= p_{1, \ell \mathit{p}}(\mathbb{Z}\times P_2).\]
Thus the sequence $\left(p_{1, \ell \mathit{p}}(\mathbb{Z}\times P_n)\right)_{n \in \mathbb{N}}$ tends to a limit in the interval $[4-2\sqrt{3}, \frac{2}{3}]$ as $n\rightarrow \infty$.
\begin{problem}\label{problem: limit of longs paths prob on Z times Pn as n tends to infinity}
	Determine 
	\[p_{1, \ell \mathit{p}} \left(\mathbb{Z}\times P_{\infty}\right):=\lim_{n\rightarrow \infty} p_{1, \ell \mathit{p}}(\mathbb{Z}\times P_{n}).\]
\end{problem}
\noindent An in principle different but related problem is determining the value of the long paths critical probability in $\mathbb{Z}^2$ (which need not be equal to the quantity $p_{1, \ell \mathit{p}} \left(\mathbb{Z}\times P_{\infty}\right)$ defined above).
\begin{problem}\label{problem: limit of longs paths prob on Z times Z}
	Determine $p_{1, \ell \mathit{p}} \left(\mathbb{Z}^2\right)$.
\end{problem}

We can also ask for $k$-independent versions of the long paths critical probability. Defining $p_{k, \ell \mathit{p}}\left(G\right)$ mutatis mutandis,  
it is straightforward to adapt our arguments and constructions from Section~\ref{section: line lattice} to show the following result, which also follows directly from the work of Liggett, Schonmann and Stacey~\cite{LiggettSchonmannStacey97} on stochastic domination of $0$-independent measures on $\mathbb{Z}$ by $k$-independent ones.
\begin{theorem}\label{theorem: k-independent long paths crit prob}[Liggett, Schonman and Stacey~\cite{LiggettSchonmannStacey97}]
	For any $k\in \mathbb{N}_{0}$, we have
	\[p_{k, \ell \mathit{p}}(\mathbb{Z})=1-\frac{k^k}{(k+1)^{k+1}},\]
	with the convention that $0^0=1$.
\end{theorem}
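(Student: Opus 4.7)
The plan is to establish matching upper and lower bounds on $p_{k, \ell p}(\mathbb{Z})$, treating each direction separately.

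For the upper bound $p_{k, \ell p}(\mathbb{Z}) \leqslant 1 - k^k/(k+1)^{k+1}$, I will appeal to the stochastic domination theorem of Liggett, Schonmann and Stacey~\cite{LiggettSchonmannStacey97}. Fix $p > 1 - k^k/(k+1)^{k+1}$ and $\mu \in \mathcal{M}_{k, \geqslant p}(\mathbb{Z})$. Indexing the edges of $\mathbb{Z}$ by $e_i = \{i, i+1\}$, a direct check from the definition of $k$-independence shows that the edge-indicator sequence $(X_{e_i})_{i\in\mathbb{Z}}$ under $\mu$ is a $k$-dependent $\{0,1\}$-valued sequence with marginals at least $p$ in the Liggett--Schonmann--Stacey sense: for any $n$, the edge families $\{e_i : i \leqslant n\}$ and $\{e_j : j \geqslant n + k + 1\}$ are $k$-distant in $\mathbb{Z}$ and so are independent under $\mu$. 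Their main theorem then furnishes $r = r(p,k) > 0$ such that $(X_{e_i})$ stochastically dominates an i.i.d.\ $\mathrm{Bernoulli}(r)$ sequence. For any $n$, the event that $n$ consecutive specified edges are all open is increasing, and therefore has $\mu$-probability at least $r^n > 0$, yielding $p_{k, \ell p}(\mathbb{Z}) \leqslant p$.

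For the lower bound $p_{k, \ell p}(\mathbb{Z}) \geqslant 1 - k^k/(k+1)^{k+1}$, I will adapt the block argument used in the proof of Theorem~\ref{theorem: long paths critical prob}(i). Fix $p < 1 - k^k/(k+1)^{k+1}$. The key step is to exhibit, for all sufficiently large $N$, a $k$-ipm $\mu_N \in \mathcal{M}_{k, \geqslant p}(P_N)$ with $\mu_N(\{P_N \textrm{ is connected}\}) = 0$. Granted this, I will define a measure $\nu$ on $\mathbb{Z}$ by placing independent copies of $\mu_N$ on horizontal shifts of $P_N$ by $N-1$, so that consecutive copies share exactly one vertex. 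Then $\nu$ is $k$-independent: $k$-distant pairs of edges within a single copy are independent by hypothesis on $\mu_N$, while edges in distinct copies are independent by construction, irrespective of their graph distance in $\mathbb{Z}$. Since each copy almost surely contains at least one closed edge, open components of $\mathbf{Z}_\nu$ are contained within two adjacent copies, giving at most $2(N-1)-1$ vertices; hence $\nu(\{\exists \textrm{ open path of length } n\}) = 0$ for any $n$ exceeding this bound, which yields $p_{k, \ell p}(\mathbb{Z}) \geqslant p$.

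The hard part will be the construction of the extremal measure $\mu_N$. A natural $k$-independent analogue of the vertex-based family underlying Theorem~\ref{theorem: connected function Pn} is to take i.i.d.\ $\{0,1\}$-valued states $S_v \sim \mathrm{Bernoulli}(\theta)$ and declare edge $\{v, v+1\}$ to be open unless the window $(S_v, S_{v+1}, \dots, S_{v+k})$ realises the pattern $\underbrace{1\cdots 1}_{k} \, 0$; this is $k$-independent since each edge depends only on $k+1$ consecutive vertex states, and these windows are pairwise disjoint for $k$-distant edges. The resulting edge-probability is $1 - \theta^k(1-\theta)$, minimised at $\theta = k/(k+1)$ with value exactly $1 - k^k/(k+1)^{k+1}$. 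To force the connection probability of $P_N$ to vanish for $p$ arbitrarily close to this threshold, one should, as in the proof of Theorem~\ref{theorem: connected function Pn}, analytically continue this vertex-based family beyond its real parameter range via an analogue of Lemma~\ref{lemma: preliminary lemma} and identify a sequence of critical parameters $\theta_N \to k/(k+1)$ at which the connection polynomials vanish. Alternatively, $\mu_N$ can be extracted from the explicit extremal $k$-dependent sequence constructed by Liggett, Schonmann and Stacey at the critical marginal density, bypassing the need for analytic continuation.
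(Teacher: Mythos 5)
The paper supplies no proof of this theorem: it attributes it to Liggett, Schonmann and Stacey and remarks that the constructions of Section~\ref{section: line lattice} can be adapted. Your proposal pursues both of these routes. The upper bound is correct and complete: the edge-indicator sequence of a $k$-ipm on $\mathbb{Z}$ is a $k$-dependent $\{0,1\}$-sequence in the Liggett--Schonmann--Stacey sense, their sharp one-dimensional domination theorem with threshold $1-k^k/(k+1)^{k+1}$ gives a positive density $r$, and domination of increasing events then yields that any $n$ specified consecutive edges are simultaneously open with $\mu$-probability at least $r^n>0$.

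The lower bound, however, has a genuine gap which you yourself flag. The block argument is sound, and your window-based vertex construction (edge $\{v,v+1\}$ closed iff $(S_v,\dots,S_{v+k})$ is the pattern $1^{k}0$) is the right $k$-analogue of the construction in Section~\ref{section: line lattice}. But at real $\theta$ the probability that $P_N$ is connected is always strictly positive, so exhibiting $\mu_N\in\mathcal{M}_{k,\geqslant p}(P_N)$ with connection probability exactly zero for $p$ arbitrarily close to $1-k^k/(k+1)^{k+1}$ is precisely where all the work lies. Continuing to complex $\theta$ via an analogue of Lemma~\ref{lemma: preliminary lemma} is plausible, but the connection probability now obeys the order-$(k+1)$ recurrence $c_N=c_{N-1}-(1-p)c_{N-k-1}$ with characteristic polynomial $\lambda^{k+1}-\lambda^k+(1-p)$, so one must track the coalescing complex pair of roots past the real threshold while verifying, as in Claim~\ref{claim: Pn book keeping}, that every intermediate subgraph probability remains in $[0,1]$; for $k\geqslant 2$ this is a genuine extension of the paper's $k=1$ argument, not a cosmetic one. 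Your alternative of extracting $\mu_N$ directly from Liggett, Schonmann and Stacey also needs care: their non-domination statement at subcritical $p$ does not automatically produce a $k$-dependent process in which some finite run of ones has probability \emph{exactly} zero, so one would have to check that their explicit extremal sequence has this stronger property before the block argument applies. Until one of these is actually carried out, the lower bound is an outline rather than a proof.
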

Given $k$ fixed, it is easy to construct a $3k$-ipm $\mu$ on $\mathbb{Z}^2$ with $d(\mu)=1-\frac{2}{k}$ and no open path of length more than $(2k+1)^2$. Indeed,  build a random graph model as follows:
\begin{itemize}
	\item begin with all edges of $\mathbb{Z}^2$ open;
	\item independently for each $(i,j)\in \mathbb{Z}^2$, choose $H_{ij}\in [k+1]$ uniformly at random and then for all $j'$: $j(k+1)- k \leq j' \leq j(k+1)+k$, set the horizontal edge $\{(i(k+1)  +H_{ij}-1,   j'), (i(k+1)+H_{ij}, j' )$ to  be closed;
	\item independently for each $(i,j)\in \mathbb{Z}^2$, choose $V_{ij}\in [k+1]$ uniformly at random and then for all $i'$: $i(k+1)- k \leq i' \leq i(k+1)+k$, set the vertical edge  $\{(i',   j(k+1)+V_{ij}-1), (i', j(k+1)+V_{ij})$ to  be closed.
\end{itemize}
It is easy to check that this random graph model is $(3k-1)$-independent, has edge probability at least $1-\frac{2}{k}+\frac{1}{k^2}$ and that every connected component has order at most $(2k+1)^2$. 
\begin{corollary}\label{corollary: k-limit of long paths critical probability is 1}
	For any fixed $k\in \mathbb{N}$,
	\[p_{(3k-1), \ell \mathit{p}} (\mathbb{Z}^2)\geq 1-\frac{2}{k}+\frac{1}{k^2}. \qquad \qed\]
\end{corollary}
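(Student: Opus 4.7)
The plan is to verify the three properties that the authors sketch: the constructed model $\mathbf{G}_\mu$ is $(3k-1)$-independent, has edge-probability at least $1-\tfrac{2}{k}+\tfrac{1}{k^2}$, and almost surely has every connected component of order at most $(2k+1)^2$. The conclusion then follows immediately: no open path of length exceeding $(2k+1)^2$ can exist in $\mathbf{G}_\mu$, so $\mu$ witnesses that $p_{3k-1, \ell \mathit{p}}(\mathbb{Z}^2) \geq 1-\tfrac{2}{k}+\tfrac{1}{k^2}$. (The case $k=1$ gives a vacuous bound.)

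For the $(3k-1)$-independence, I would first identify, for each random variable, the set of edges whose state it can affect. The variable $H_{ij}$ controls exactly $2k+1$ horizontal edges, namely those of the form $\{(i(k+1)+H_{ij}-1, y'), (i(k+1)+H_{ij}, y')\}$ for $y' \in [j(k+1)-k, j(k+1)+k]$; any two such edges lie at graph distance at most $2k$ in $\mathbb{Z}^2$ (follow the vertical path joining their endpoints). An analogous statement holds for each $V_{ij}$. Since the family $\{H_{ij}, V_{ij} : (i,j) \in \mathbb{Z}^2\}$ is mutually independent, whenever two edge sets $F_1, F_2$ are $(2k+1)$-distant no single variable can influence edges in both, so the states of the edges in $F_1$ and those in $F_2$ are measurable with respect to disjoint collections of independent variables, and hence are independent. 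For $k \geq 2$ this yields the required $(3k-1)$-independence. For the edge-probability bound, consider a horizontal edge $e = \{(x,y),(x+1,y)\}$; vertical edges are handled symmetrically. Direct inspection shows that $e$ is closed if and only if some $(i,j)$ satisfies $H_{ij} = x - i(k+1) + 1 \in [k+1]$ and $y \in [j(k+1)-k, j(k+1)+k]$. The former determines a unique $i \in \mathbb{Z}$, and the latter admits at most two integer values of $j$ (since the interval has length $2k/(k+1) < 2$). Thus the state of $e$ depends on at most two independent $H$-variables, each taking its required value with probability $1/(k+1)$, giving
\[\mu\{e \text{ is open}\} \geq \left(1 - \tfrac{1}{k+1}\right)^{\!2} = \left(\tfrac{k}{k+1}\right)^{\!2} \geq \left(\tfrac{k-1}{k}\right)^{\!2} = 1 - \tfrac{2}{k} + \tfrac{1}{k^2}.\]

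The main work lies in the component-size bound. The closed horizontal edges form a collection of vertical wall-segments, one per $(i,j)$, each of length $2k+1$ and contained in the block $[i(k+1),i(k+1)+k] \times [j(k+1)-k, j(k+1)+k]$. Consecutive such segments (same $i$, successive $j$) have $y$-ranges overlapping by $k$, but may be horizontally offset when $H_{ij} \neq H_{i,j+1}$, producing a ``jog''; an analogous family of horizontal wall-segments comes from the $V_{ij}$. The plan is to show that, despite these jogs, the wall-segments collectively partition $\mathbb{Z}^2$ into regions each fitting inside some $(2k+1) \times (2k+1)$ box, from which the order bound on connected components follows. I expect this geometric check to be the most involved step: one must verify that any candidate path trying to escape such a block --- for example, by threading around a vertical jog via neighbouring rows --- is always sealed off by some horizontal wall-segment (and symmetrically), regardless of the values of the $H$ and $V$ variables.
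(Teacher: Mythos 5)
Your plan is the right one (just verify the three asserted properties), and your edge-probability computation is correct, but the independence check contains a genuine error and the component-size argument --- the real substance of the corollary --- is left as a sketch.

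For the independence claim: you identify the edges ``controlled by'' $H_{ij}$ as the $2k+1$ horizontal edges that $H_{ij}$ closes for a \emph{given} realization (a strip of height $2k+1$ at a single $x$-column, of diameter $2k$). But what governs $k$-independence is the set of edges whose \emph{state} is a measurable function of $H_{ij}$, i.e.\ all horizontal edges that could be closed under \emph{some} value of $H_{ij}$: this is the full $(k+1)\times(2k+1)$ block of edges with left endpoints $(a,b)$, $a\in[i(k+1),i(k+1)+k]$, $b\in[j(k+1)-k,j(k+1)+k]$, and its diameter is $3k-1$, not $2k$. Concretely, take $e_1 = \{(i(k+1), j(k+1)-k),(i(k+1)+1, j(k+1)-k)\}$ (closed iff $H_{ij}=1$ or $H_{i,j-1}=1$) and $e_2 = \{(i(k+1)+k, j(k+1)+k),(i(k+1)+k+1, j(k+1)+k)\}$ (closed iff $H_{ij}=k+1$ or $H_{i,j+1}=k+1$). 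Their closest endpoints are at distance $(k-1)+2k=3k-1\geq 2k+1$ for $k\geq 2$, yet
\[
\mu(e_1,e_2\text{ both open}) = \frac{(k-1)k^2}{(k+1)^3} \neq \frac{k^4}{(k+1)^4} = \mu(e_1\text{ open})\,\mu(e_2\text{ open}),
\]
so they are dependent: the model is \emph{not} $(2k+1)$-independent. The correct conclusion from the block-diameter bound is $3k$-independence. (This same example shows the pair is dependent even at distance $3k-1$, so the paper's parenthetical ``$(3k-1)$-independent'' is itself off by one; the corollary should really be stated for $p_{3k,\ell p}(\mathbb{Z}^2)$, which still yields $\lim_{k\to\infty}p_{k,\ell p}(\mathbb{Z}^2)=1$.) Separately, you concede that the bound of $(2k+1)^2$ on component order is only outlined, not proved; this is where the work of the corollary actually lives, and until the ``jogs cannot be threaded'' check is carried out in full, the proposal is incomplete on that front.
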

\noindent In particular we have $\lim_{k\rightarrow \infty}p_{k, \ell \mathit{p}} (\mathbb{Z}^2)=1$ (and in fact a similar construction shows this remains true in $\mathbb{Z}^d$).


Finally, as in Section~\ref{subsection: different critical probabilities}, we should observe that the almost sure existence of arbitrarily long open paths in a $1$-independent model on $G$ does not imply that for every $\ell\in \mathbb{N}$ every vertex of $G$ has a strictly positive probability of being part of a path of length at least $\ell$. Thus we may actually define a second long paths critical probability,
\[p_{1, \ell\mathit{p}_2}(G):=\inf\Bigl\{p\in [0,1]: \ \forall \mu \in \mathcal{M}_{1,\geqslant p}(G), \forall v\in V(G), \forall \ell\in \mathbb{N},\  \mu(\exists \textrm{ open path from $v$ of length }\ell )>0 \Bigr\}.\]
\begin{problem}
	Determine $p_{1, \ell\mathit{p}_2}(\mathbb{Z}^2)$.
\end{problem}
\noindent Our construction in the proof of Theorem~\ref{theorem: local construction integer lattice} shows that $p_{1, \ell\mathit{p}_2}(\mathbb{Z}^2)\geqslant 4-2\sqrt{3}$, and we know it is upper-bounded by $p_{1, H_2} (\mathbb{Z}^2)\leqslant 0.8639$. As in Section~\ref{subsection: problems on simpler classes of 1-ipms}, it may be fruitful to study the long paths critical constant when one restricts one's attention to a smaller class of $1$-ipms. In particular, by considering the class of uniformly bounded vertex-based measures, one is led to the following intriguing problem in graph theory.

Given an $n$-uniformly bounded $\mathbb{Z}^2$-partite graph $H$ with partition $\sqcup_{v\in \mathbb{Z}^2} \Omega_v$. A \emph{transversal subgraph} of $H$ is a subgraph of $H$ induced by a set of distinct representatives $S$ for the parts of $H$, i.e. a set of vertices of $H$ such that $\vert S\cap \Omega_v\vert=1$ for all $v\in \mathbb{Z}^2$. The \emph{$G$-partite density} of $H$ is
\[d_G(H):=\inf \Bigl\{ \frac{ e(H[\Omega_u \sqcup \Omega_v])}{\vert \Omega_u\vert \cdot \vert \Omega_v \vert}: \ uv\in E(\mathbb{Z}^2) \Bigr\} .\]
\begin{question}\label{question: graph theory}
Suppose $H$ is an $n$-uniformly bounded $\mathbb{Z}^2$-partite graph in which in every transversal subgraph the connected component containing the origin is...
\begin{enumerate}[(a)]
	\item ... of size at most $C$, for some constant $C\in \mathbb{N}$.
	\item ... finite.
\end{enumerate}
How large can $d_G(H)$ be?	
\end{question}	
\noindent This question can be viewed as a problem from extremal multipartite graph theory. Plausibly some tools from that area, in particular the work of Bondy, Shen, Thomass\'e and Thomassen~\cite{BondyShenThomasseThomassen06} and Pfender~\cite{Pfender12}, could be brought to bear on it.

\subsection{Connectivity function}
We determined in Sections~\ref{section: cycles} and~\ref{section: maximising connectivity} the connectivity function $f_{1,C_n}(p)$  for cycles $C_n$ of length at most $5$. It is natural to ask what happens for longer cycles.
\begin{problem}\label{problem: connectivity of cycles}
	Determine $f_{1,C_n}(p)$ for $n \in \mathbb{N}_{\geqslant 6}$.	
\end{problem}	
As mentioned in Section~\ref{section: cycles}, the problem of finding $f_{1,C_6}(p)$ is non-linear. Nevertheless, one can use software, such as Maple and its contained NLPSolve function, to try to estimate the answer. This suggests the following:
\begin{itemize}
	\item{The threshold at which $f_{1,C_6}(p)$ becomes nonzero is approximately $p=0.59733$;}
	\item{For $p$ just above this threshold, the best `asymmetric' (see the next subsection for a definition) measure is \emph{better} than the best `symmetric' measure; e.g. at $p=0.62$ we have $f_{1,C_6}(0.62)$ is approximately $0.007$, but is as high as $0.11$ when restricted to `symmetric' measures.}
\end{itemize}
More generally, one can ask what happens in cycles if we have higher dependency or if we try to maximise connectivity rather than minimise.
\begin{problem}\label{problem: minimising connectivity for cyles}
	Determine $f_{k, C_n}(p)$ for all $p\in [0,1]$, $k\in \mathbb{N}$ and integers $n\geqslant k+2$.
\end{problem}
\begin{problem}\label{problem: maximising connectivity for cyles}
	Determine $F_{k, C_n}(p)$ for all $p\in [0,1]$, $k\in \mathbb{N}$ and integers $n\geqslant k+2$.
\end{problem}
Beyond paths, cycles and complete graphs, the $1$-independent connectivity problem is perhaps most natural to study in the hypercube graph $Q_n$ and in the $n\times n$ toroidal grid $C_n\times C_n$. Progress on either of these would likely lead to progress on other problems in $1$-independent percolation as well.
\begin{problem}\label{problem: connectivity of hypercubes}
	Determine $f_{1,Q_n}(p)$ for all $n\geqslant 3$.
\end{problem}	
\begin{problem}\label{problem: connectivity of toroidal grids}
	Determine $f_{1,C_n\times C_n}(p)$ for all $n \geqslant 3$.
\end{problem}

In a different direction, we can ask whether the extremal measures attaining $f_{1, G}(p)$ can be required to have `nice' properties. For $C_4$ and $p \in [0,1/2]$ another extremal construction for $f_{1,C_4}(p)$ is given by the measure $\mu$, defined by
\begin{equation}
\mu(\hat{H}) = \begin{cases} 
1-2p & \text{if $H$ is the empty graph;}\\
p(1-p) & \text{if $H$ is $\{12,14\}$ or $\{23,34\}$};\\
p^2 & \text{if $H$ is contains precisely two edges, which are not adjacent;}\\
0 & \text{otherwise.}\\
\end{cases}
\nonumber
\end{equation}
Motivated by the above, we call a measure $\mu \in \mathcal{M}_{1,\geqslant p}(G)$ \emph{symmetric} if for any pair of labelled subgraphs $S$ and $T$ of $G$ such that there exists an automorphism of $G$ mapping $S$ to $T$, then $\mu(\hat{S})=\mu(\hat{T})$. Note that the above measure is an example of a non-symmetric extremal construction for $f_{1,C_4}(p)$, whereas the measure given at the end of Section~\ref{subsection: linear programming} is symmetric. This leads to the following question.
\begin{question}\label{question: symmetry}
	For any $G$ and any $p \in [0,1]$, does there always exist a symmetric measure $\mu \in \mathcal{M}_{1,\geqslant p}(G)$ which achieves $f_{1,G}(p)$?
\end{question}
If the programme for solving $f_{1,G}(p)$ attained via our method in Section~\ref{subsection: linear programming} is linear, then the answer for $G$ is yes (see the appendix for a proof of this fact).


Another natural question is when the extremal connectivity can be attained by vertex-based measures.
\begin{question}\label{question: state-based}
	For which $G$ and which $p$ does there exist a vertex-based measure $\mu \in \mathcal{M}_{1,\geqslant p}(G)$ which achieves $f_{1,G}(p)$?
\end{question}

\section*{Acknowledgements}
First and foremost, the second author wants to express his debt of gratitude to Mark Walters, who introduced him to $1$-independent percolation during his PhD thesis, and with whom he constructed the $1$-independent measure on the ladder given in Section~\ref{section: ladder}.

Secondly the authors would like to collectively thank Paul Balister, Alexander Holroyd, Oliver Riordan and Amites Sarkar for stimulating discussions, encouragement and useful references and advice.

The work in this paper was undertaken during the Spring semester 2018, while the third author was a guest researcher in the Ume{\aa} under the auspices of the Ume{\aa}--Birmingham Erasmus exchange agreement. We are grateful for the Erasmus support that made this visit possible. Research of the first and second author was funded by the grant VR 2016-03488 from the Swedish Research Council, whose support is also gratefully acknowledged.

Finally, the authors would like to thank the two anonymous referees for their careful work on the paper, and for their helpful comments which led to improvements in the exposition.

\section*{Appendix}\label{appendix: symmetric proof}
Let $S_n$ be the symmetric group on $n$ elements. For a graph $G$ let $\Gamma(G)$ be the automorphism group of $G$, 
$\Gamma(G):=\{ \sigma \in S_n: \sigma(i) \sigma(j) \in E(G) \text{ if and only if } ij \in E(G),  \text{ for all }i,j \in [n]\}$. Enumerate the elements of $\Gamma(G)$ as $\sigma_i$, $i \in [|\Gamma(G)|]$, and for $H \subseteq G$ write $H_{\sigma_i}$ for its image under $\sigma_i$.
Recall $\mu$ is \emph{symmetric} if for all $H \subseteq G$ and for all $i,j$ we have $\mu(\hat{H}_{\sigma_i})=\mu(\hat{H}_{\sigma_j})$.

\begin{theorem}
Let $G$ be a graph with vertex set $[n]$ such that the optimisation problem for $f_{1,G}(p)$ is linear. Suppose that $\mu$ is a non-symmetric measure which achieves $f_{1,G}(p)$ for some value of $p$. Then for this same value of $p$ there exists another $\mu'$ which is symmetric and also achieves $f_{1,G}(p)$.
\end{theorem}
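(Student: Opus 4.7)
The natural approach is \emph{symmetrisation by averaging over the automorphism group}. Given the hypothesised non-symmetric optimum $\mu$, for each $\sigma \in \Gamma(G)$ define the push-forward measure $\mu^{\sigma}$ by $\mu^{\sigma}(\hat{H}) := \mu(\widehat{\sigma^{-1}(H)})$, and then set
\[
\mu' := \frac{1}{|\Gamma(G)|}\sum_{\sigma \in \Gamma(G)} \mu^{\sigma}.
\]
By the usual group-averaging trick, $\mu'$ is symmetric: for any $\tau \in \Gamma(G)$ and any $H \subseteq G$, the map $\sigma \mapsto \sigma\tau$ is a bijection on $\Gamma(G)$, so $\mu'(\hat{H}_\tau) = |\Gamma(G)|^{-1}\sum_\sigma \mu(\sigma^{-1}\tau(H)) = |\Gamma(G)|^{-1}\sum_{\sigma'}\mu(\sigma'^{-1}(H)) = \mu'(\hat{H})$.

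Next I would show $\mu' \in \mathcal{M}_{1,\geqslant p}(G)$. Since automorphisms permute vertices, edges, and disjoint subgraph pairs, each individual push-forward $\mu^{\sigma}$ again satisfies the system $A\mathbf{w} = \mathbf{q}$, $\mathbf{w}\geqslant 0$ from Section~\ref{subsection: linear programming}: it is still a probability measure (row indexed by $\emptyset$), the edge constraint at $e$ is equivalent to the edge constraint that $\mu$ satisfies at $\sigma(e)$, and the disjointness/$1$-independence constraints for a pair $(S,T)$ translate to the corresponding constraints for the pair $(\sigma(S), \sigma(T))$. This is exactly the point at which the linearity hypothesis is used: for graphs on which the programme is linear, every constraint $\mu(\widehat{S\cup T}) = \mu(\hat{S})\,\mu(\hat{T})$ can (after random sparsification) be rewritten with $\mu(\hat{T}) = p$ constant, giving a linear equation in $\mu$; convex combinations then preserve all constraints, so $\mu' \in \mathcal{M}_{1,\geqslant p}(G)$.

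Finally, the value is preserved because connectivity is an automorphism-invariant property of subgraphs: $\sigma^{-1}(H)$ is connected iff $H$ is, so
\[
\mu'(\text{connected}) = \frac{1}{|\Gamma(G)|}\sum_{\sigma}\mu^{\sigma}(\text{connected}) = \frac{1}{|\Gamma(G)|}\sum_{\sigma}\mu(\text{connected}) = f_{1,G}(p).
\]
Thus $\mu'$ is a symmetric optimum, as required.

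\textbf{Where the difficulty lies.} There is no real technical obstacle once the linearity hypothesis is invoked correctly. The whole point of that hypothesis is precisely to rule out the genuinely hard case: when some $1$-independence constraint forces a relation $\mu(\widehat{S\cup T}) = \mu(\hat{S})\,\mu(\hat{T})$ in which $\mu(\hat{T})$ is an unknown function of $\mu$ (as happens for $C_6$), a convex combination $\mu'$ of feasible $\mu^{\sigma}$'s need not satisfy the product constraint, because $\mathbb{E}[XY]\neq \mathbb{E}[X]\mathbb{E}[Y]$ in general. So the care required is conceptual rather than computational: one must make sure, when writing the proof, that every constraint used for $\mu$ is an \emph{affine} constraint (which is where the $|V(G)|\leqslant 5$ observation in Section~\ref{subsection: linear programming} enters), so that averaging over $\Gamma(G)$ preserves membership in $\mathcal{M}_{1,\geqslant p}(G)$.
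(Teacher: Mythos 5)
Your proposal is essentially the paper's proof: you symmetrise $\mu$ by averaging its push-forwards over the automorphism group $\Gamma(G)$, argue that each push-forward remains feasible by automorphism-invariance of the constraints, invoke linearity to ensure feasibility survives the convex combination (this is exactly where the paper uses that the programme is linear), and note that connectivity is automorphism-invariant so the objective value is preserved. The only cosmetic blemish is that the $1$-independence constraints are in terms of the containment probabilities $\mu(S),\mu(T),\mu(S\cup T)$ rather than the point masses $\mu(\hat{S}),\mu(\hat{T}),\mu(\widehat{S\cup T})$ as you wrote them, but this does not affect the argument.
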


\begin{proof}
Let $\mu$ be the non-symmetric measure which achieves $f_{1,G}(p)$. For all $H \subseteq G$ define 
\begin{equation}
\mu'(\hat{H}):=\frac{1}{|\Gamma(G)|} \sum_{j=1}^{|\Gamma(G)|} \mu(\hat{H}_{\sigma_j}).
\nonumber
\end{equation}
First note that we have the following:
\begin{eqnarray}\label{mu1}
\mu'(S_{\sigma_j}) & =&  \sum_{H \subseteq G} \mathbbm{1}(S_{\sigma_j} \subseteq H) \mu'(\hat{H})  \nonumber \\
 &=& \frac{1}{|\Gamma(G)|}  \sum_{H \subseteq G} \sum_{i=1}^{|\Gamma(G)|}  \mathbbm{1}(S_{\sigma_j} \subseteq H_{\sigma_i})  \mu'(\hat{H}) \nonumber \\
& =& \frac{1}{|\Gamma(G)|}  \sum_{H \subseteq G} \sum_{i=1}^{|\Gamma(G)|}  \mathbbm{1}(S_{\sigma_i} \subseteq H)  \mu'(\hat{H}) \nonumber \\
 &= & \frac{1}{|\Gamma(G)|} \sum_{i=1}^{|\Gamma(G)|} \mu(S_{\sigma_i}).
\end{eqnarray}
The first and final equalities follow by definition. The second equality follows by summing through each automorphism of $H$ and the fact that $\mu'(\hat{H}_{\sigma_i})=\mu'(\hat{H}_{\sigma_j})$ for all $i,j$. The third equality follows by swapping automorphisms of $H$ to automorphisms of $S$, which again works since $\mu'(\hat{H}_{\sigma_i})=\mu'(\hat{H}_{\sigma_j})$. 

Now note that if $S$ is the empty graph or a single edge, then $\mu(S_{\sigma_i})=\mu(S_{\sigma_j})$ for all $i,j$ and thus we obtain $\mu'(S_{\sigma_i})=\mu(S_{\sigma_i})$ for all $i$. It easily follows that $\mu'$ is a measure with edge-probability $p$. We must show $\mu'(S) \cdot \mu'(T) = \mu'( S \cup T)$ for all $S,T$ which are labelled non-empty subgraphs of $G$  supported on disjoint subsets of vertices. If the optimisation problem is linear, then without loss of generality we have $\mu'(T)=p$, and so this follows by linearity and (\ref{mu1}). It remains to show that $\mu'$ also achieves $f_{G}(p)$. Again this follows easily since 
\begin{eqnarray}
 \mu' (\text{a $\mu'$-random graph is connected})  &=& \sum_{H \subseteq G} \mathbbm{1}(\text{$H$ is connected}) \mu'(H) \nonumber \\
 &= & \frac{1}{|\Gamma(G)|}  \sum_{H \subseteq G} \sum_{i=1}^{|\Gamma(G)|}  \mathbbm{1}(\text{$H_{\sigma_i}$ is connected}) \mu'(H_{\sigma_i}) \nonumber\\
&= & \mu (\text{a $\mu$-random graph is connected}),\nonumber
\end{eqnarray}
where the second equality follows by summing through each automorphism of $H$, and the third since $H_{\sigma_i}$ is connected if and only if $H_{\sigma_j}$ is connected, for all $i,j$.
\end{proof}
\end{document}